\newcounter{notes}
\newcommand{\ignore}[1]{}
\definecolor{maxime}{rgb}{0.8,0,0.2}
\definecolor{fred}{rgb}{0,0,0.8}
\newtheorem{theorem}{Theorem}
\newtheorem{proposition}[theorem]{Proposition}
\newtheorem{corollary}[theorem]{Corollary}
\newtheorem{lemma}[theorem]{Lemma}
\newtheorem{fact}[theorem]{Fact}
\newtheorem{observation}[theorem]{Observation}
\theoremstyle{definition}
\newtheorem{remark}[theorem]{Remark}
\newtheorem{example}[theorem]{Example}
\newtheoremstyle{theoremwithref}{}{}{\itshape}{}{\bfseries}{.}{.5em}{#1 #2 #3}
\theoremstyle{theoremwithref}
\newcommand{\R}{\mathbb{R}}
\newcommand{\Z}{\mathbb{Z}}
\newcommand{\N}{\mathbb{N}}
\newcommand{\bbS}{\mathbb{S}}
\newcommand{\T}{\mathbb{T}}
\newcommand{\Aut}{\mathrm{Aut}}
\newcommand{\Point}{\mathrm{Point}}
\newcommand{\Diff}{\mathrm{Diff}}
\newcommand{\One}{\mathrm{One}}
\newcommand{\Mod}{\mathrm{Mod}}
\DeclareMathOperator{\Homeo}{Homeo}
\newcommand{\NCfin}[1]{\mathcal{NC}_{#1}^\dagger}
\newcommand{\TransFin}{\NCfin{\pitchfork}}
\newcommand{\TransFinLisse}{\mathcal{NC}_\pitchfork^{\dagger\infty}}
\newcommand{\Cfin}[1]{\mathcal{C}_{#1}^\dagger}
\newcommand{\Afin}[1]{\mathcal{A}_{#1}^\dagger}
\newcommand{\NAfin}[1]{\mathcal{NA}_{#1}^\dagger}
\newcommand{\point}{\mathrm{Point}}
\newcommand{\cA}{\mathcal{A}}
\newcommand{\adjacent}{\diamondvert}
\newcommand{\width}{\mathrm{Width}}
\title{Automorphisms of some variants of fine graphs}
\author{Fr\'ed\'eric Le Roux}
\address{Sorbonne Universit\'es, UPMC Univ.\ Paris 06,
Institut de Math\'ematiques de Jussieu-Paris Rive Gauche,
UMR 7586, CNRS, Univ. Paris Diderot, Sorbonne
Paris Cit\'e, 75005 Paris, France}
\email{frederic.le-roux@imj-prg.fr}
\author{Maxime Wolff}
\address{Sorbonne Universit\'es, UPMC Univ.\ Paris 06,
Institut de Math\'ematiques de Jussieu-Paris Rive Gauche,
UMR 7586, CNRS, Univ. Paris Diderot, Sorbonne
Paris Cit\'e, 75005 Paris, France}
\email{maxime.wolff@imj-prg.fr}
\begin{document}

\numberwithin{theorem}{section}

\begin{abstract}
  Recently Bowden, Hensel and Webb defined the {\em fine curve graph} for
  surfaces, extending the notion of curve graphs for the study of
  homeomorphism or diffeomorphism groups of surfaces.
  Later Long, Margalit, Pham, Verberne and Yao proved that for a closed
  surface of genus~$g\geqslant 2$,
  the automorphism group of the fine graph is naturally isomorphic
  to the homeomorphism group of the surface.
  We extend this result to the torus case $g=1$; in fact our method works for   
  more general surfaces, compact or not, orientable or not.
  We also discuss the case of a smooth version of the fine graph.
\vspace{0.2cm}

\end{abstract}

\maketitle

\sloppy
\section{Introduction}

\subsection{Context and results}

For a connected, compact surface $\Sigma_g$ of genus $g\geqslant 1$,
Bowden, Hensel and Webb~\cite{BHW} recently introduced the
{\em fine curve graph} $\mathcal{C}^\dagger(\Sigma)$,
as the graph whose vertices are all the essential closed curves on
$\Sigma$, with an edge between two vertices $a$ and $b$ whenever
$a\cap b=\emptyset$, if $g\geqslant 2$, and whenever
$|a\cap b|\leqslant 1$ if $g=1$. They proved that for every $g\geqslant 1$,
the graph $\mathcal{C}^\dagger(\Sigma)$ is hyperbolic, and derived a
construction of an infinite dimensional family of quasi-morphisms on
$\Homeo_0(\Sigma)$, thereby answering long standing questions of
Burago, Ivanov and Polterovich.

The ancestor of the fine graph is the
usual curve complex of a surface $\Sigma$, \textsl{i.e.}, the complex whose vertices
are the isotopy classes of essential curves, with an edge (or a simplex,
more generally) between some vertices if and only if they have disjoint
representatives. Since its introduction by Harvey~\cite{Harvey}, the curve
complex of a surface has been an extremely useful tool for the study of the
mapping class group $\Mod(\Sigma)$ of that surface, as it acts on it naturally.
In particular,
the fact that this complex is hyperbolic, discovered by Masur and Minsky,
has greatly improved the understanding of the mapping class groups (see~\cite{MM1,MM2}).
The result of Bowden, Hensel and Webb, promoting the
hyperbolicity of the curve complex to that of the fine curve graph, opens the
door both to the study of
what classical properties of usual curve complexes have counterparts in
the fine curve graph, and to the use of this graph to derive properties of
homeomorphism groups.
A first step in this direction was taken by
Bowden, Hensel, Mann, Militon and Webb~\cite{BHMMW}, who explored the metric
properties of the action of $\Homeo(\Sigma)$ on this hyperbolic graph.

A classical theorem by Ivanov~\cite{Ivanov} states that, when $\Sigma$ is
a closed surface of genus $g\geqslant 2$, the natural map
$\Mod(\Sigma)\to\Aut(\mathcal{C}(\Sigma))$ is an isomorphism.
Recently Long, Margalit, Pham, Verberne and Yao~\cite{LMPVY} proved
the following natural counterpart of Ivanov's theorem for fine graphs:
provided $\Sigma$ is a compact orientable surface of genus $g\geqslant 2$, the natural map
\[ \Homeo(\Sigma)\longrightarrow\Aut(\mathcal{C}^\dagger(\Sigma))  \]
is an isomorphism.
They also suggested that this map
(with the appropriate version of
$\mathcal{C}^\dagger$)
may also be an isomorphism when $g=1$, and
conjectured that the automorphism group of the fine curve graph
of smooth curves, should be nothing more than $\Diff(\Sigma)$.

In this article, we address both these questions.
Our motivation originates from the case of the torus: excited by~\cite{BHMMW},
 we wanted to understand
more closely the relation between the rotation set of homeomorphisms isotopic
to the identity and the metric properties of their actions on the fine graph.
This subject
will be treated in another article, joint with Passeggi and
Sambarino~\cite{Fantome}. The methods developed in the present article are valid not only for the torus but for a large class of surfaces.

We work on nonspherical surfaces
(\textsl{i.e.}, surfaces not embeddable in the $2$-sphere, or equivalently,
containing at least one nonseparating simple closed curve), orientable or not,
compact or not.
We consider the graph $\TransFin(\Sigma)$, whose vertices are
the nonseparating simple closed curves, and with an edge between two
vertices $a$ and $b$ whenever they are either disjoint, or have
exactly one, topologically transverse intersection point (see the beginning of Section~\ref{sec:Lemmes1-2-3} for more detail).
Our first result answers a problem raised in~\cite{LMPVY}.
\begin{theorem}\label{thm:AutC1}
  Let $\Sigma$ be a connected, nonspherical surface, without boundary.
  Then the natural map $\Homeo(\Sigma)\to\Aut(\TransFin(\Sigma))$
  is an isomorphism.
\end{theorem}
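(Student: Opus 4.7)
The natural map is clearly a group homomorphism; I would establish injectivity and surjectivity separately.

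\textbf{Injectivity.} Suppose $\phi\in\Homeo(\Sigma)$ induces the identity of $\TransFin(\Sigma)$, that is $\phi(a)=a$ setwise for every nonseparating simple closed curve $a$. For any $p\in\Sigma$, since $\Sigma$ is nonspherical and has no boundary, one can produce two nonseparating simple closed curves $a,b$ meeting transversely at the single point $p$ (a purely local construction inside a handle or M\"obius band neighborhood of $p$). Then
\[ \{\phi(p)\}=\phi(a\cap b)=\phi(a)\cap\phi(b)=a\cap b=\{p\}, \]
so $\phi$ fixes every point.

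\textbf{Surjectivity.} Given $\Phi\in\Aut(\TransFin(\Sigma))$, I would build $\phi\in\Homeo(\Sigma)$ inducing $\Phi$ in three stages. \emph{Stage 1 (recover the edge type):} an edge of $\TransFin(\Sigma)$ records either disjointness or a single transverse intersection, and I would look for a purely combinatorial criterion distinguishing these two flavours in terms of common neighbours of $a$ and $b$; the idea is that the regular neighborhood $N(a\cup b)$ has different topology in the two cases (annulus versus once-punctured torus or Klein bottle), which should be visible through the curves that can be fitted inside $N(a\cup b)$ or cross it. \emph{Stage 2 (recover points):} to each $p\in\Sigma$ associate the pencil $P_p$ of all nonseparating simple closed curves through $p$, and characterize $P_p$ combinatorially as a maximal set of curves pairwise of ``meet once'' type whose pairwise intersection points all coincide; the coincidence of intersection points can be encoded by three-curve configurations, using Stage 1 to compare the topology of the tripod $a\cup b\cup c$ versus the theta-graph obtained when the three pairwise intersection points differ. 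Then $\Phi$ permutes pencils, giving a bijection $\phi\colon\Sigma\to\Sigma$ such that $\phi(a)=\Phi(a)$ as subsets of $\Sigma$ for every vertex $a$. \emph{Stage 3 (continuity):} prove $\phi$ and $\phi^{-1}$ are continuous by exhibiting, at each point, a neighborhood basis consisting of topological disks cut out by arcs of curves in the pencils of the point and its neighbours, and noting that such disks are detected from the combinatorial incidences between curves, which $\Phi$ preserves.

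\textbf{Main obstacle.} The critical step is Stage 2 in its full generality. On the torus any two nonseparating simple closed curves meet, so the graph is combinatorially very dense and pencils must be distinguished from many other large ``meet-once'' cliques. On non-compact or non-orientable surfaces one also has to handle one-sided curves, curves escaping to ends, and local asymmetries between the two sides of a two-sided curve. Crafting a single combinatorial characterization of pencils that works simultaneously in all these cases, and verifying that it is preserved by arbitrary graph automorphisms, is the real heart of the argument.
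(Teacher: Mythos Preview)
Your outline has the right architecture---injectivity is easy, surjectivity goes via recovering points from the graph, and the bouquet/necklace dichotomy for $3$-cliques is indeed the central tool---but what you have written is a strategy, not a proof. None of Stages~1--3 is actually carried out, and the paper's execution of each is more delicate than your sketch suggests.

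For Stage~1, the paper does \emph{not} distinguish the two edge types directly via common neighbors. Instead it first characterizes $3$-cliques of type \emph{necklace}: these are precisely the $3$-cliques $\{a,b,c\}$ for which there is a finite set $F$ of at most eight curves (namely the nonseparating curves among the eight ``halves'' of the theta-graph $a\cup b\cup c$) such that every curve forming a $4$-clique with $a,b,c$ is adjacent to some member of~$F$. Disjointness $D(a,b)$ is then derived as ``no curve $c$ makes $\{a,b,c\}$ a necklace.'' Your proposed heuristic about the topology of $N(a\cup b)$ does not obviously yield such a first-order criterion.

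For Stage~2, your approach via pencils has a genuine obstruction that the paper explicitly discusses. The equivalence relation $\adjacent$ generated by ``$(a,b)$ and $(a,c)$ form a bouquet'' is \emph{strictly finer} than equality of intersection points: two curves through the same point $p$ can spiral infinitely around each other near $p$, so that no chain of bouquets connects them. Thus ``maximal sets of curves pairwise forming bouquets'' do not recover the points of~$\Sigma$. The paper sidesteps this by characterizing instead the relation ``$\Point(a,b)\notin c$'' (there exist $a',b'$ with $\{a,b\}\adjacent\{a',b'\}$ and $\{a',b',c\}$ a non-bouquet $3$-clique); this turns out to suffice, since $\Point(a,b)=\Point(\alpha,\beta)$ iff no curve contains one point but not the other. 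Proving that $\adjacent$ is rich enough for this requires connectedness results for certain fine arc graphs, together with extra ``bouquet-like'' configurations in the nonorientable case.

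For Stage~3, the paper's argument is cleaner than a neighborhood-basis construction: once you know the bijection $\phi$ and $\phi^{-1}$ send nonseparating curves to nonseparating curves, continuity follows from a short closure argument (any convergent sequence lies on a single nonseparating curve, whose image is closed).
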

Our second result concerns the smooth version of fine graphs.
We consider the graph $\TransFinLisse(\Sigma)$ whose vertices are the
smooth nonseparating curves in $\Sigma$, with an edge between $a$
and $b$ if they
are disjoint or have one, transverse intersection point, in the differentiable
sense (in particular, $\TransFinLisse(\Sigma)$ is not the subgraph of
$\TransFin(\Sigma)$ induced by the vertices corresponding to smooth curves:
it has fewer edges). The following result partially confirms the
conjecture of~\cite{LMPVY}; here we restrict to the case of orientable surfaces for simplicity.
\begin{theorem}\label{thm:AutCFinLisse}
  Let $\Sigma$ be a connected, orientable, nonspherical surface, without boundary.
  Then all the automorphisms of $\TransFinLisse(\Sigma)$ are realized
  by homeomorphisms of~$\Sigma$.
\end{theorem}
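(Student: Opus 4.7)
The plan is to reduce Theorem~\ref{thm:AutCFinLisse} to Theorem~\ref{thm:AutC1} by extending any automorphism $\varphi$ of $\TransFinLisse(\Sigma)$ to an automorphism $\hat\varphi$ of $\TransFin(\Sigma)$, and then invoking Theorem~\ref{thm:AutC1} to produce a homeomorphism of $\Sigma$ realizing $\hat\varphi$, which in particular realizes $\varphi$ on smooth curves.

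First I would distinguish combinatorially, in the spirit of Ivanov-style rigidity, the two kinds of edges of $\TransFinLisse(\Sigma)$: two disjoint smooth curves have a regular neighborhood that is an annulus or a pair of pants, while two smoothly transverse curves meeting in one point have a regular neighborhood that is a once-punctured torus. This topological difference should be detectable via the structure of common neighbors in the graph, and would yield that $\varphi$ preserves disjointness of smooth curves. Next, for every topological nonseparating simple closed curve $c$, consider the link
\[ \mathcal{N}_c \;:=\; \{\, d \in \TransFinLisse(\Sigma) \,:\, c \cap d = \emptyset \,\}. \]
Since $\Sigma$ is nonspherical, for distinct topological curves $c_1, c_2$ and any point $p \in c_1 \setminus c_2$ one can exhibit a smooth nonseparating curve passing near $p$ and disjoint from $c_2$ --- by modifying a generic smooth nonseparating curve in $\Sigma \setminus c_2$ so that it threads through a small disk neighborhood of $p$ --- so the assignment $c \mapsto \mathcal{N}_c$ is injective on the vertex set of $\TransFin(\Sigma)$. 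One would then like to set $\hat\varphi(c) = c'$, where $c'$ is the unique topological curve satisfying $\mathcal{N}_{c'} = \varphi(\mathcal{N}_c)$.

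The main obstacle is the existence of such a $c'$: showing that $\varphi(\mathcal{N}_c)$ is realized as the link of some topological curve. My proposal is an approximation argument. Given $c$, choose sequences of disjoint smooth curves $a_n, b_n$ bounding ever thinner annular neighborhoods of $c$, with $a_n, b_n \to c$ in Hausdorff distance. By the disjointness preservation, $\varphi(a_n)$ and $\varphi(b_n)$ remain disjoint and bound a nested sequence of closed annular regions $A_n$; one then argues that $\bigcap_n A_n$ is a single topological simple closed curve $c'$, and sets $\hat\varphi(c) := c'$. The technical heart of the proof is to verify that $\mathcal{N}_{c'} = \varphi(\mathcal{N}_c)$, and that the resulting $\hat\varphi$ preserves every edge of $\TransFin(\Sigma)$ --- including edges between smooth curves that meet tangentially but with topological crossing, which are invisible inside $\TransFinLisse$ and so must be recovered through approximation by smoothly transverse neighbors. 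Once $\hat\varphi \in \Aut(\TransFin(\Sigma))$ is established, Theorem~\ref{thm:AutC1} provides the homeomorphism realizing $\hat\varphi$, and therefore also $\varphi$.
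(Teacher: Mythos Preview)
Your strategy---extending $\varphi$ to an automorphism of the full $\TransFin(\Sigma)$ and then invoking Theorem~\ref{thm:AutC1}---is genuinely different from the paper's. The paper does not reduce to the $C^0$ case; instead it reruns the whole argument of Theorem~\ref{thm:AutC1} inside $\TransFinLisse(\Sigma)$, using a notion of weak convergence of sequences of vertices to characterize disjointness (Lemma~\ref{lem:DisjointLisse}) and bouquets (Lemma~\ref{lem:BouquetLisse}), and from there rebuilds the adjacency relation and the $\Point$ map in the smooth setting.

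Your proposal, as written, has real gaps. First, the step distinguishing the two edge types is not justified: you say the difference in regular neighborhoods ``should be detectable via the structure of common neighbors,'' but you give no mechanism, and the paper's own disjointness criterion in $\TransFinLisse$ is not a common-neighbor argument at all---it uses sequences and the weak convergence $W((f_n),a)$, and the introduction explicitly notes that the $C^0$ and $C^\infty$ criteria do not transfer to one another. Second, and more seriously, even granting disjointness preservation your annulus argument does not go through. Two disjoint nonseparating smooth curves need not cobound an annulus, so from $\varphi(a_n)\cap\varphi(b_n)=\emptyset$ alone you cannot conclude that $\varphi(a_n),\varphi(b_n)$ bound an annular region, let alone that these regions are nested with intersection a single simple closed curve. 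To get that you would first need to characterize, in the graph, when two disjoint curves are isotopic---which is essentially the configuration-recognition work the paper does anyway. Finally, the step you yourself flag as the ``technical heart''---showing $\hat\varphi$ preserves the edges of $\TransFin(\Sigma)$ coming from \emph{topological} transversality between possibly wild curves---is left entirely open, and it is not clear how approximation by smooth curves would control a single topological crossing between two non-smooth curves.
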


In other words, if we denote by $\Homeo_{\infty \pitchfork}(\Sigma)$ the subgroup of
$\Homeo(\Sigma)$ preserving the collection of smooth curves and preserving
transversality, then the natural map
\[
\Homeo_{\infty \pitchfork}(\Sigma) \longrightarrow \Aut(\TransFinLisse(\Sigma))
\]
is an isomorphism. We were surprised to realize however that $\Homeo_{\infty \pitchfork}(\Sigma)$ is strictly larger than $\Diff(\Sigma)$.

\begin{proposition}\label{prop:PasDiff}
  Every surface $\Sigma$ admits a homeomorphism $f$
  such that
  $f$ and $f^{-1}$ preserve the set of smooth curves, and preserve
  transversality, but such that neither $f$ nor $f^{-1}$ is differentiable.
  In particular, the natural map 
  $$\Diff(\Sigma)\longrightarrow \Aut(\TransFinLisse(\Sigma))$$
  is not surjective.
\end{proposition}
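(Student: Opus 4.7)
The plan is to construct $f$ by a local modification near a single point $p \in \Sigma$. Choose a smooth chart identifying a neighborhood of $p$ with an open disk in $\R^2$ equipped with polar coordinates $(r, \theta)$, and let $\phi : S^1 \to \R$ be a smooth nonconstant function satisfying the antipodal identity $\phi(\theta+\pi) = \phi(\theta)$, with $\|\phi'\|_\infty$ small enough that $\psi(\theta) := \theta + \phi(\theta)$ is a diffeomorphism of $S^1$; concretely, $\phi(\theta) = \varepsilon \sin(2\theta)$ with $0<\varepsilon<1/2$ works. Choose a smooth cut-off $\beta : [0, \infty) \to [0, 1]$ with $\beta \equiv 1$ near $0$ and $\beta \equiv 0$ outside $[0, r_0]$, and define
$$f(r, \theta) = \bigl(r,\ \theta + \beta(r)\phi(\theta)\bigr)$$
inside the chart, extended by the identity outside. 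Then $f$ is a homeomorphism of $\Sigma$, equal to the identity off a small disk around $p$, smooth on $\Sigma \setminus \{p\}$, but not differentiable at $p$: indeed the directional limits $\|v\|^{-1}f(v)$ as $v \to 0$ along angle $\theta$ are $(\cos\psi(\theta), \sin\psi(\theta))$, which do not come from a linear map since $\psi$ is not a rotation. The inverse $f^{-1}$ has an identical structure with $\psi^{-1}$ in place of $\psi$, hence the same defects.

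The central step is to check that $f$ preserves smoothness of curves and transversality. Away from $p$ this is immediate since $f$ is a local diffeomorphism. For a smooth curve $\gamma$ through $p$ with tangent direction $\theta_0$ at $p$, one writes $\gamma(t) = R_{\theta_0}(t,\, t^2 h(t))$ in the chart with $h$ smooth, and computes in polar coordinates: the signed radius $\tilde r(t) = t\sqrt{1 + t^2 h(t)^2}$ and the corrected angle $\tilde\theta(t) = \arctan(t h(t))$ are both smooth functions of $t$, and the antipodal identity $\psi(\theta + \pi) = \psi(\theta) + \pi$ exactly compensates the branch jump of polar coordinates at $t=0$, producing the single smooth formula
$$f(\gamma(t)) = \tilde r(t)\,\bigl(\cos\psi(\theta_0 + \tilde\theta(t)),\ \sin\psi(\theta_0 + \tilde\theta(t))\bigr)$$
valid for all $t$ near $0$. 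Thus $f \circ \gamma$ is smooth and $f(\gamma)$ is a smooth submanifold. Transversality at $p$ is preserved because the bijection $\theta \mapsto \psi(\theta)$ sends distinct tangent directions to distinct tangent directions. The same analysis with $\psi^{-1}$ covers $f^{-1}$.

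The main obstacle is the smoothness verification at $p$: a naive polar computation fails because polar coordinates are themselves not smooth at the origin, and it is precisely the antipodal compatibility $\phi(\theta+\pi) = \phi(\theta)$ that merges the two one-sided expressions into a single smooth one. Once $f$ is built, the non-surjectivity of $\Diff(\Sigma) \to \Aut(\TransFinLisse(\Sigma))$ follows by a short point-recovery argument: if some diffeomorphism $g$ of $\Sigma$ induced the same automorphism as $f$, then $f$ and $g$ would coincide as set-maps on every smooth nonseparating curve, and since any point of $\Sigma$ is the unique intersection of two such transverse curves, one would conclude $f = g$, contradicting $f \notin \Diff(\Sigma)$.
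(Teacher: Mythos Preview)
Your construction is correct. The paper builds an analogous example in Cartesian rather than polar coordinates: it takes a smooth diffeomorphism $h$ of $\R$ supported in $[1/2,2]$ with $h(1)\neq 1$, and sets $F(x,y)=(x,\,xh(y/x))$ for $x\neq 0$, $F(0,y)=(0,y)$. This $F$ is positively $1$-homogeneous, smooth off the origin, and has nonlinear directional derivatives at the origin. The smoothness check for $F\circ\gamma$ reduces in one line to the observation that if $\alpha(0)=0$ then $t\mapsto \alpha(t)/t$ extends smoothly (via $\alpha(t)/t=\int_0^1\alpha'(ts)\,ds$), which is exactly Hadamard's lemma; curves whose tangent at the origin has slope outside $[1/2,2]$ are germwise fixed by $F$, so there is nothing to check for them.

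Both constructions are instances of the same phenomenon: a positively $1$-homogeneous homeomorphism of the plane that acts by a nonlinear bijection on the space of directions. Your polar version makes this action on directions completely explicit via $\theta\mapsto\psi(\theta)$ and isolates the antipodal condition $\psi(\theta+\pi)=\psi(\theta)+\pi$ as the exact hypothesis needed for images of smooth curves to remain smooth across the origin; the paper's Cartesian version hides this symmetry but buys a slightly shorter smoothness verification, since one never has to unwind the singularity of polar coordinates. Your explicit point-recovery argument for the non-surjectivity of $\Diff(\Sigma)\to\Aut(\TransFinLisse(\Sigma))$ is a nice addition; the paper leaves this implicit, relying on the injectivity established in Theorem~\ref{thm:AutCFinLisse}.
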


\subsection{Idea of the proof of Theorem~\ref{thm:AutC1}}

The main step in this proof is the following.
\begin{proposition}\label{prop:Types}
  If $\{a,b\}$, or $\{a,b,c\}$ is a $2$-clique or
  a $3$-clique of $\TransFin(\Sigma)$ then, from the graph
  structure of $\TransFin(\Sigma)$, we can tell the type of the clique.
\end{proposition}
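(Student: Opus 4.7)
I would tackle the statement in two stages, first settling the 2-clique case and then using it as a building block for the 3-clique case.

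For a 2-clique $\{a,b\}$, I need a graph-theoretic predicate $P(a,b)$, definable using only the adjacency relation of $\TransFin(\Sigma)$, that holds if and only if $a\cap b=\emptyset$. The geometric intuition is that two disjoint curves cobound a tubular region and admit independent parallel pushoffs on both sides, whereas two curves sharing a single transverse intersection point $p$ are anchored together at $p$. The natural translation is a condition of the form: there exist common neighbors $c_1,c_2,d_1,d_2$ of $\{a,b\}$ such that $(c_1,c_2)$ and $(d_1,d_2)$ are themselves edges, behaving like ``parallel copies of $a$'' and ``parallel copies of $b$'' respectively, where parallelism is itself detected through the graph via an auxiliary adjacency pattern (for instance: $c$ is a parallel pushoff of $a$ iff $\{a,c\}$ is an edge and every common neighbor of $a,c$ satisfies a further property transferring from one to the other). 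The conclusion is that the disjoint case allows the full ``two-sided'' pattern and the intersecting case obstructs it.

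For a 3-clique $\{a,b,c\}$, the first step is to apply the 2-clique classification to each of the three pairs, which yields the incidence pattern: which pairs are disjoint and which intersect once. The only configurations not yet distinguished are those where all three pairs intersect, in which case one must further decide whether the three intersection points are distinct or whether some (or all) coincide, the extreme subcase being that the three curves are concurrent at a single point $p$. To resolve this, I would analyze the link of $\{a,b,c\}$: if the three curves are concurrent at $p$, then any curve disjoint from $a\cup b\cup c$ must avoid a neighborhood of $p$, which imposes restrictions on the adjacency patterns of auxiliary curves that are absent in the generic three-distinct-points case. An appropriate graph-theoretic witness—such as the existence of a fourth curve sharing an intersection with each of $a,b,c$ in a controlled way—can then be used to tell the two subcases apart.

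\emph{Main obstacle.} The hard part is the 2-clique step. Because both configurations extend to arbitrarily large cliques via parallel copies (already for two parallel curves versus a meridian-longitude pair on the torus), the invariant cannot depend on clique size alone; it must probe finer structural properties of the pair $a\cup b$, essentially encoding whether a regular neighborhood is connected. Once this characterization is in place, the 3-clique analysis reduces to a manageable case check, using the 2-clique criterion applied to auxiliary curves and a local test to detect concurrency.
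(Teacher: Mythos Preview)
Your overall strategy inverts the paper's: you propose to characterize 2-cliques first and build up to 3-cliques, whereas the paper does the opposite. It first characterizes the \emph{necklace} type among 3-cliques, via the property $N(a,b,c)$: there exist at most eight vertices $F$, distinct from $a,b,c$, such that every $d$ with $\{a,b,c,d\}$ a 4-clique is adjacent to some element of $F$. The key geometric idea---which your proposal misses---is that a necklace $a\cup b\cup c$ is the unique 3-clique type whose union contains nonseparating simple closed curves other than $a,b,c$ (the eight curves obtained by choosing one of the two arcs of each of $a,b,c$). These extra curves form the set $F$. For any non-necklace clique, one shows (Lemmas~\ref{lem:drenccc} and~\ref{lem:CestLeBouquet}) that there exists a $d$ meeting every component of $\Sigma\smallsetminus(a\cup b\cup c)$, which can then be made to intersect any finite list of prescribed curves infinitely often; this kills the $N$-property. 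Only \emph{after} this is disjointness characterized, as $D(a,b)$: there is no $d$ with $N(a,b,d)$.

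The gap in your plan is precisely the 2-clique step, which you yourself flag as the main obstacle. Your proposed criterion via ``independent parallel pushoffs on both sides'' does not obviously separate the two cases: a transverse pair $(a,b)$ on the torus admits just as many disjoint parallel copies of $a$ and of $b$ as a disjoint pair does, and your sketch of how to detect ``parallelism'' through the graph is not a definition one can check. Likewise, your proposed bouquet/necklace test---that curves disjoint from $a\cup b\cup c$ must avoid a neighborhood of the triple point in the bouquet case---does not single out the bouquet: the necklace complement also has complicated topology, and the real distinguishing feature (extra subcurves inside the necklace) is absent from your discussion. Without a concrete, verifiable predicate at the 2-clique level, the rest of your plan cannot proceed.
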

If $\{a_1,\ldots, a_n\}$ is an $n$-clique in the graph
$\TransFin(\Sigma)$, the homeomorphism type of the subset
$\cup_{j=1}^n a_j$ of $\Sigma$ will be called the {\em type} of the $n$-clique.
We will explore this only for $2$ and $3$-cliques. A $2$-clique
$\{a,b\}$, \textsl{i.e.}, an edge of the graph
$\TransFin(\Sigma)$,
may have two distinct types: the intersection $a\cap b$ may be empty
or not. For a $3$-clique $\{a,b,c\}$, up to permuting the curves $a$, $b$
and $c$, the cardinals of the intersections
$a\cap b$, $a\cap c$ and $b\cap c$, respectively, may be $(1,1,1)$,
or $(1,1,0)$, or $(1,0,0)$, or $(0,0,0)$. This determines the type of
the $3$-clique, except in the case $(1,1,1)$, where the intersection
points $a\cap b$, $a\cap c$ and $b\cap c$ may be pairwise distinct, in
which case we will speak of a $3$-clique of type {\em necklace}, or these
intersection points may be equal, in which case we will speak of a
$3$-clique of type {\em bouquet}, see
Figure~\ref{fig:Bouquet-Collier}.
\begin{figure}[htb]
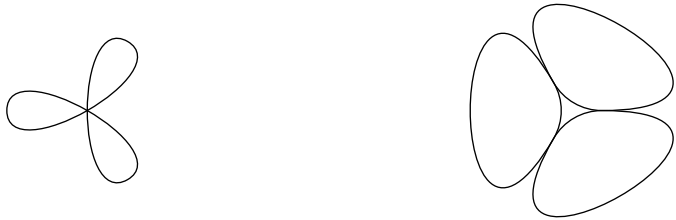

\begin{asy}
  import geometry;

  picture Bouquet, Collier;
  real L = 30, l = 12;
  path boucle = (0,0){dir(30)}..L*dir(60)..{dir(-90)}(0,0);
  draw (Bouquet, (0,0){dir(30)}..L*dir(60)..{dir(-90)}(0,0));
  draw (Bouquet, rotate(120)*boucle, (0,0));
  draw (Bouquet, rotate(-120)*boucle, (0,0));
  add(Bouquet, (0,0));

  path cercle = l*dir(0){dir(0)}..l*dir(120){dir(300)}..cycle;
  draw (Collier, cercle);
  draw (Collier, rotate(120)*cercle, (0,0));
  draw (Collier, rotate(-120)*cercle, (0,0));
  add(Collier, (180,0));
\end{asy}
\caption{A bouquet (left) and a necklace (right) of three circles.}
\label{fig:Bouquet-Collier}
\end{figure}

The main bulk of the proof of Proposition~\ref{prop:Types} consists in
distinguishing the $3$-cliques of type
necklace from any other $3$-clique of $\TransFin(\Sigma)$.
Here, the key is that among all the $3$-cliques, the cliques $\{a,b,c\}$ of
type necklace are exactly those such that the union $a\cup b\cup c$ contains
nonseparating simple closed curves other than $a$, $b$ and $c$.
In terms of the graph structure, this leads to the following property,
denoted by $N(a,b,c)$, which turns out to characterize these cliques:

{\em There exists
a finite set $F$ of at most $8$ vertices of $\TransFin(\Sigma)$,
all distinct from $a$, $b$ and $c$, such that every vertex $d$ connected
to $a$, $b$ and $c$ in this graph, is connected to at least one element
of $F$.}

From this,
we will easily characterize all the configurations of $2$-cliques and $3$-cliques
in terms of similar statements in the first order logic of
the graph $\TransFin(\Sigma)$.

Now, let $T(\TransFin(\Sigma))$ denote the set of edges $\{a,b\}$ of
$\TransFin(\Sigma)$ satisfying $|a\cap b|=1$. Then
we have a map
\[ \Point\colon \ T(\TransFin(\Sigma))\to\Sigma, \]
which to each edge $\{a,b\}$ of $\TransFin(\Sigma)$, associates the intersection point
$a\cap b$.
The next step in the the proof of Theorem~\ref{thm:AutC1} now consists in
characterizing the equality $\Point(a,b)=\Point(c,d)$ in terms of the
structure of the graph. This characterization shows that every automorphism of
$\TransFin(\Sigma)$ is realized by some bijection of $\Sigma$; then we prove that such a
bijection is necessarily a homeomorphism (see Proposition~\ref{prop:BijHomeo}).

In order to characterize the equality $\Point(a,b)=\Point(c,d)$,
we introduce on $T(\TransFin(\Sigma))$ the relation $\adjacent$
generated, essentially (see section~\ref{sec:Adjacency} for details),
by
$(a,b)\adjacent(b,c)$ if $(a,b,c)$ is a $3$-clique of type bouquet.
Obviously, if $(a,b)\adjacent(c,d)$ then $\Point(a,b)=\Point(c,d)$.
Interestingly, the converse is false, but we can still use this idea in
order to characterize the points of $\Sigma$ in terms of the graph structure
of $\TransFin(\Sigma)$.

This subtlety between the relation $\adjacent$ and the equality of points
is related to the non smoothness of the curves involved, and more precisely,
to the fact that a curve may spiral infinitely with respect to another curve in a
neighborhood of a common point.
We think that this phenomenon is of independent interest and we investigate it in Section~\ref{sec:LocalSubgraphs}. In particular, we can easily state, in terms of the graph structure
of $\TransFin(\Sigma)$, an
obstruction for a homeomorphism to be conjugate to a $C^1$-diffeomorphism,
see Section~\ref{ssec:Tourbillons}.

\subsection{Ideas of the proof of Theorem~\ref{thm:AutCFinLisse}}

In the smooth case, the adaptation of our proof of Theorem~\ref{thm:AutC1}
fails from the start: indeed, the closed curves contained in the union
$a\cup b\cup c$ of a necklace, and distinct from $a$, $b$ and $c$, are
not smooth.
This suggests the idea to use sequences of curves (at the expense of losing the
characterizations of configurations in terms of first order logic).

This time it is easiest to first characterize disjointness of curves (see Lemma~\ref{lem:DisjointLisse}), and then recover
the different types of $3$-cliques. Then the strategy follows the $C^0$ case.

Once we start to work with sequences, it is natural to say that a sequence
$(f_n)$
of curves not escaping to infinity
converges to $a$ in some weak sense, if for every vertex $d$ such
that $\{a,d\}$ is an edge of the graph, $\{f_n,d\}$ is also an edge for all
$n$ large enough.
As it turns out, this property implies convergence in $C^0$-sense to
$a$, and is implied by convergence in $C^1$-sense. But it is not equivalent
to the convergence in $C^1$-sense, and it is precisely this default of
$C^1$-convergence that enables us to distinguish between disjoint or
transverse pairs of curves.

Interestingly,
this simple criterion for disjointness has no counterpart in
the $C^0$-setting. Indeed, in that setting, no sequence of curves converges
in this weak sense:
given a curve $a$, and a sequence $(f_n)$ of curves with, say, some
accumulation point in $a$, we can build a curve $d$ intersecting $a$ once
transversally (topologically), but oscillating so much that it itersects every
$f_n$ several times.
From this perspective, none of our approaches in the
$C^0$-setting and in the $C^\infty$-setting are directly adaptable to the other.




\subsection{Further comments}

We can imagine many variants of fine graphs. For example, in the
arXiv version of~\cite{BHW}, for the case of the torus they worked with the
graph $\TransFin(\Sigma)$ on which we are working here, whereas in the
published version, they changed to
a fine graph in which two curves $a$ and $b$ are still related by an edge
when they have one intersection, not necessarily transverse.

More generally, in the spirit of Ivanov's metaconjecture,
we expect that the group of
automorphisms should not change from
any reasonable variant to another. And indeed,
using the ideas of \cite[Section~2]{LMPVY} and those presented here,
we can navigate between various versions of fine graphs, and recover,
from elementary properties of one version, the configurations defining
the edges in another version, thus proving that their automorphism groups
are naturally isomorphic.
From this perspective, it seems satisfying to
recover the group of homeomorphisms of the surface as the automorphism
group of any reasonable variant of the fine graph.
In this vein, we should mention that the results of~\cite[Section~2]{LMPVY}
directly yield a natural map
$\Aut(\Cfin{}(\Sigma))\to\Aut(\TransFin(\Sigma))$, and from there, our
proof of Theorem~\ref{thm:AutC1} may be used as
an alternative proof of their main result.


All reasonable variants of the fine graphs should be quasi-isometric,
and a unifying theorem (yet out of reach today, as it seems to us)
would certainly be a counterpart of the theorem by
Rafi and Schleimer~\cite{RafiSchleimer}, which states
that every quasi-isometry of the usual curve graph is bounded distance
from an isometry.

\subsection{Organization of the article}
Section~\ref{sec:Lemmes1-2-3} is devoted to the recognition of the
$3$-cliques in the $C^0$-setting, and of some other configurations regarding
the nonorientable case. We encourage the reader to skip, at first reading,
everything that concerns the nonorientable case: these points shoud be easily
identified, and this halves the length of the proof.
In Section~\ref{sec:AutHomeo} we prove Theorem~\ref{thm:AutC1}.
In Section~\ref{sec:LocalSubgraphs} we characterize, from the topological
viewpoint, the relation $\adjacent$ introduced above in terms of the graph
structure, and deduce our obstruction to differentiability.
Finally in Section~\ref{sec:AutCFinLisse} we prove Theorem~\ref{thm:AutCFinLisse}
and Proposition~\ref{prop:PasDiff}.

\subsection*{Acknowledgments}

We thank Kathryn Mann for encouraging discussions, and Dan Margalit for his extensive feedback on a preliminary version of this manuscript.

\section{Recognizing configurations of curves}\label{sec:Lemmes1-2-3}

\subsection{Standard facts and notation}
We will use, often without mention, the following easy or standard facts for curves
on surfaces.

The first is the classification of connected, topological surfaces with
boundary (not necessarily compact).
In particular, every topological surface admits a smooth structure.
Given a closed curve $a$ in a surface $\Sigma$, we can apply this classification
to $\Sigma\smallsetminus a$ and understand all possible configurations of simple
curves;
this is the so-called {\em change of coordinates
principle} in the vocabulary of the book of Farb and Margalit~\cite{FarbMargalit}.

In particular, every
closed curve $a$ has a neighborhood homeomorphic to an annulus or a M\"obius strip
in which $a$ is the ``central curve''.
Very often in this article, we will consider the curves $a'$ obtained by
deforming $a$ in such a neighborhood, so that $a'$ is disjoint from $a$ in the
first case, or intersects it once, transversely, in the second, as in
Figure~\ref{fig:FigureFacts}. We will say that $a'$ is obtained by {\em pushing
$a$ aside}.

The change of coordinates principle also
applies to
finite graphs embedded in $\Sigma$: there is a homeomorphism of $\Sigma$
that sends any given graph to
a smooth graph, such that all edges connected to a given vertex leave it in distinct directions.
In the simple case when the graph is the union of two or three simple closed curves that pairwise intersect at most once, this observation 
justifies the description of the possible configurations of cliques in
the introduction. This also enables, provided two curves $a$ and $b$ intersect
at a single point (or more generally at a finite number of points), to speak
of a {\em transverse} (also called {\em essential}), or to the contrary
{\em inessential}, intersection point, as we did in the introduction.


Here are two other useful facts.
\begin{fact}\label{fact:NonSep}
  A simple closed curve $a$ in a surface, is nonseparating if and only
  if there exists a closed curve $b$, such that $a\cap b$ is a
  single point and this intersection is transverse.
\end{fact}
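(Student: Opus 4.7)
The plan is to prove the two implications separately, both resting on classical change-of-coordinates considerations and the classification of surfaces that the authors invoke freely.

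For the forward implication, I would start from a nonseparating simple closed curve $a$ and build $b$ by hand. Take a tubular neighborhood $N$ of $a$ in $\Sigma$; by the standard classification, $N$ is homeomorphic to either an open annulus or an open M\"obius strip with $a$ as its core. In the M\"obius case, a transverse arc $\alpha$ in $N$ crossing $a$ once already has its two endpoints on a single boundary circle of $N$, and joining them by an arc in $\partial N\smallsetminus\alpha$ produces a closed curve $b$ meeting $a$ transversely in exactly one point. In the annular case, let $p^+$ and $p^-$ be the two endpoints on $\partial N$ of a transverse arc $\alpha$ through some point $p\in a$. Because $a$ is nonseparating, $\Sigma\smallsetminus a$ is connected, hence so is $(\Sigma\smallsetminus a)\cup\partial N$; we may therefore choose an arc $\beta$ from $p^+$ to $p^-$ avoiding $a$. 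Then $b=\alpha\cup\beta$ is a closed curve with $|a\cap b|=1$, the intersection being transverse by construction. A small isotopy (or the change of coordinates principle) lets us take $b$ simple if desired, though simplicity is not required by the statement.

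For the converse, I would argue by contrapositive: if $a$ separates $\Sigma$ into two open regions $U_1$ and $U_2$, then no closed curve can meet $a$ transversely in exactly one point. Indeed, let $b\colon\bbS^1\to\Sigma$ be a closed curve meeting $a$ transversely, and pick a basepoint of $\bbS^1$ mapped outside $a$, say into $U_1$. Traveling once around $\bbS^1$, each transverse crossing of $a$ switches the component ($U_1\leftrightarrow U_2$) containing $b$, since locally near each crossing point a small neighborhood in $\Sigma$ is cut by $a$ into exactly two sides, one in $U_1$ and one in $U_2$. To return to $U_1$ at the end we must switch sides an even number of times, so $|a\cap b|$ is even. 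In particular $|a\cap b|\ne 1$, which contradicts the hypothesis on $b$ and proves $a$ must be nonseparating.

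The only subtle point is the existence of the connecting arc $\beta$ in the annular case, which uses connectedness of $\Sigma\smallsetminus a$ together with the fact that the two boundary components of $N$ touch both sides of $a$; this is where nonseparation is actually used. Everything else is the change of coordinates principle plus the elementary parity observation, so I do not anticipate a real obstacle in carrying out either direction.
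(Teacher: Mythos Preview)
The paper does not actually prove this fact; it is listed among the ``standard facts'' at the start of Section~\ref{sec:Lemmes1-2-3} and used without justification. Your argument is correct and is the standard one: build $b$ from a short transverse arc in a tubular neighborhood of $a$, closed up by an arc in the complement of $a$ (using that $\Sigma\smallsetminus a$ is connected in the two-sided case, and that the single boundary circle of the M\"obius neighborhood already does the job in the one-sided case), and use the parity argument for the converse.

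Two minor cosmetic remarks. First, writing $(\Sigma\smallsetminus a)\cup\partial N$ is redundant since $\partial N\subset\Sigma\smallsetminus a$ already. Second, in the M\"obius case the nonseparation hypothesis is automatic (one-sided simple closed curves never separate), so as you correctly identify, the annular case is the only one where the hypothesis is genuinely invoked.
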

\begin{fact}\label{fact:TroisArcsSep}
  Let $p,q$ be two distinct points, and $x,x',x''$ three simple arcs,
  each with end-points $p$ and $q$, such that
  \[  x \cap x' = x' \cap x'' = x \cap x'' = \{p,q\}.  \]
  If two of the three curves $x \cup x', x' \cup x'', x'' \cup x$ are
  separating, then the third one is also separating.
\end{fact}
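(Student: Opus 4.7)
The plan is to use mod-$2$ intersection numbers as a test for separation. Recall the following standard principle for a simple closed curve $C$ in a connected surface $\Sigma$ without boundary: $C$ is separating if and only if $|\gamma \cap C|$ is even for every loop $\gamma$ that is in general position with $C$. The ``only if'' direction is immediate, since a loop alternates between the two local sides of $C$ at each crossing. For the converse, if $\Sigma \setminus C$ is connected then the two local sides of $C$ near any chosen point $p_0 \in C$ lie in the same component of $\Sigma \setminus C$; an arc in $\Sigma \setminus C$ joining them, completed through a small transverse segment across $C$ at $p_0$, yields a loop meeting $C$ exactly once. This argument is insensitive to whether $C$ is one-sided or two-sided, or to whether $\Sigma$ is compact.

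Given this criterion, the key chain-level observation is that each of the three arcs $x$, $x'$, $x''$ is contained in exactly two of the three cycles $C_1 := x \cup x'$, $C_2 := x' \cup x''$, $C_3 := x'' \cup x$. Hence for any loop $\gamma$ put in general position with the theta graph $x \cup x' \cup x''$ (in particular avoiding $p$ and $q$), one has
$$
|\gamma \cap C_1| + |\gamma \cap C_2| + |\gamma \cap C_3|
\;=\; 2\bigl(|\gamma \cap x| + |\gamma \cap x'| + |\gamma \cap x''|\bigr)
\;\equiv\; 0 \pmod{2}.
$$
If $C_1$ and $C_2$ both separate, the first two terms on the left are even for every such $\gamma$, and hence so is the third; a small perturbation away from $p, q$ shows that the same parity holds for every loop in general position with $C_3$ alone, and the criterion then yields that $C_3$ separates.

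The only point that requires genuine care is the characterization of separating curves by intersection parity, since the paper allows $\Sigma$ to be non-orientable and non-compact. The elementary connectedness argument sketched above handles these cases uniformly, so no homological machinery is needed.
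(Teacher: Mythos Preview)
Your proof is correct, but it takes a genuinely different route from the paper's.

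The paper argues directly at the point-set level: assuming $x\cup x'$ and $x\cup x''$ each split $\Sigma$ into two pieces, it names these pieces $\Sigma_1,\Sigma_2$ and $\Sigma_3,\Sigma_4$, and then exhibits the decomposition $\Sigma\smallsetminus(x'\cup x'')=(\Sigma_2\cap\Sigma_4)\sqcup(\Sigma_1\cup y\cup\Sigma_3)$ by inspecting what happens near $p$ and $q$. No intersection theory is invoked; the whole thing is a short paragraph of component bookkeeping.

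Your approach is the homological one in disguise: a simple closed curve separates iff its class in $H_1(\Sigma;\Z/2\Z)$ vanishes, which you detect via parity of intersection with test loops, and then you use the chain identity $C_1+C_2+C_3=2(x+x'+x'')\equiv 0$. This is cleaner conceptually, generalizes immediately (any odd number of such cycles would satisfy the same conclusion), and makes the symmetry among the three curves manifest. The paper's argument, by contrast, breaks the symmetry by privileging the common arc $x$ of the two assumed-separating curves; it is more hands-on and avoids even the mild appeal to general position and intersection parity, at the cost of being slightly more ad hoc.
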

\begin{proof}
  Denote $y=x\smallsetminus\{p,q\}$, the arc $x$ without its ends, and
  similarly, define $y'$ and $y''$. Suppose $x\cup x'$ and $x\cup x''$ are
  separating.
  Denote by $\Sigma_1, \Sigma_2$, resp. $\Sigma_3, \Sigma_4$,
  the components of
  $\Sigma\smallsetminus(x\cup x')$, resp. $\Sigma\smallsetminus(x\cup x'')$,
  where $\Sigma_2$ contains $y''$ and $\Sigma_4$ contains $y'$.
  By looking at neighborhoods of $p$ and $q$
  (see Figure~\ref{fig:FigureFacts}, left), we see that
  $\Sigma'=\Sigma_2\cap\Sigma_4$ is non empty, and that the arc $y$
  bounds $\Sigma_1$ on one side, and $\Sigma_3$ on the other, so
  $\Sigma''=\Sigma_1\cup y\cup \Sigma_3$ is a surface.
  Now, $\Sigma\smallsetminus(x'\cup x'')=\Sigma'\cup\Sigma''$, and
  $\Sigma'$ and $\Sigma''$ are disjoint by construction.
\end{proof}
\begin{figure}[htb]
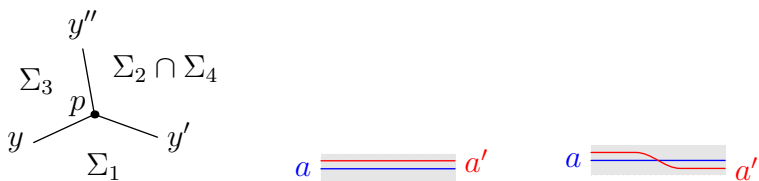

\hfill
\begin{asy}
  import geometry;
  
  real R = 25;

  dot ((0,0)); label("$p$", (0,0), WNW);
  draw ((0,0)--R*dir(100)); draw(R*dir(205)--(0,0)--R*dir(-20));
  label("$y''$", R*dir(100), N);
  label("$y$", R*dir(205), W);
  label("$y'$", R*dir(-20), E);

  label("$\Sigma_1$", 0.8*R*dir(-80));
  label("$\Sigma_3$", R*dir(150));
  label("$\Sigma_2\cap\Sigma_4$", 0.7*R*dir(80), E);
\end{asy}
\hfill
\begin{asy}
  import geometry;
  
  real L = 50, h = 11, e = 3;
  path contour = (0,0)--(L, 0)--(L, h)--(0, h)--cycle;
  fill (contour, lightgray);  draw(contour, lightgray+dotted);
  draw ((0,h/2)--(L,h/2), blue);  label("$a$", (0, h/2), W, blue);
  draw ((0, h/2+e)--(L, h/2+e), red); label("$a'$", (L, h/2+e), E, red);
\end{asy}
\hfill
\begin{asy}
  import geometry;
  
  real L = 50, h = 11, e = 3;
  path contour = (0,0)--(L, 0)--(L, h)--(0, h)--cycle;
  fill (contour, lightgray);  draw(contour, lightgray+dotted);
  draw ((0,h/2)--(L,h/2), blue);  label("$a$", (0, h/2), W, blue);
  draw ((0, h/2+e)..(1/3*L, h/2+e){right}..(2/3*L, h/2-e){right}..(L, h/2-e), red);
  label("$a'$", (L, h/2-e), E, red);
\end{asy}
\hspace{3cm}
\caption{Left: a neighborhood of $p$. Center: pushing a two-sided curve $a$.
Right: pushing a one-sided curve $a$.}
\label{fig:FigureFacts}
\end{figure}

\subsection{Properties characterizing geometric configurations}
Now we list the properties, in terms of the graph
$\TransFin(\Sigma)$, that will be used as
characterizations of certain configurations of curves.
This allows us to specify the statement of Proposition~\ref{prop:Types}, which
will be proved in the next paragraph, and define the relation
$\adjacent$ in terms of the graph $\TransFin(\Sigma)$.

In the following, the letters $N, D, T, B$ respectively stand for
necklace, disjoint, transverse, and bouquet.
If $a,b,c$ are vertices of this graph, we will denote by:
\begin{itemize}
\item[$\bullet$] $N(a,b,c)$ the property that $\{a,b,c\}$ is a $3$-clique
  of $\TransFin(\Sigma)$ and there exists a finite set $F$ of at most $8$
  vertices of $\TransFin(\Sigma)$, all distinct from $a$, $b$ and $c$,
  such that for every vertex $d$ such that $\{a,b,c,d\}$ is a $4$-clique,
  there is an edge from $d$ to at least one element of $F$,
\item[$\bullet$] $D(a,b)$ the property that $\{a,b\}$ is an edge of
  $\TransFin(\Sigma)$ and there does not
  exist a vertex $d$ such that $N(a,b,d)$ holds,
\item[$\bullet$] $T(a,b)$ the property that $\{a,b\}$ is an edge of
  $\TransFin(\Sigma)$ and $D(a,b)$ does not hold,
\item[$\bullet$] $B(a,b,c)$ the property that $T(a,b)$, $T(a,c)$, $T(b,c)$
  all hold but $N(a,b,c)$ does not.
\end{itemize}

The following proposition is the main part of
Proposition~\ref{prop:Types}.
\begin{proposition}\label{prop:TypesPrecis}
  Let $a$, $b$ and $c$ be vertices of the graph $\TransFin(\Sigma)$.
  Property $N(a,b,c)$ holds if and only if $\{a,b,c\}$
  is a $3$-clique of type necklace of $\TransFin(\Sigma)$.
\end{proposition}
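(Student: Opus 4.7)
The plan is to prove the two implications of the equivalence separately. The common starting point is the observation (already highlighted in the introduction) that among all types of 3-cliques, the necklace is the unique one for which $a\cup b\cup c$ contains simple closed curves other than $a, b, c$ themselves: when the three intersection points $p_{ab}, p_{ac}, p_{bc}$ are pairwise distinct, each of $a, b, c$ is cut into two arcs and gives eight ``triangle'' simple closed curves $T_{ijk}=a_i\cup b_j\cup c_k$, whereas in the other configurations no such auxiliary curves exist.

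For the implication that a necklace satisfies $N(a,b,c)$, I would take $F$ to be (a subset of) the eight triangles $T_{ijk}$, keeping only those that are nonseparating in $\Sigma$ (and hence vertices of $\TransFin(\Sigma)$); certainly $|F|\leqslant 8$ and all are distinct from $a, b, c$. If $d$ completes $\{a,b,c\}$ to a 4-clique, then $d$ meets each of $a, b, c$ in at most one transverse point, so $d\cap(a\cup b\cup c)$ has at most three points, lying on some arcs $a_i, b_j, c_k$. The triangle $T_{i'j'k'}$ is disjoint from $d$ when $i'\neq i$, $j'\neq j$, $k'\neq k$, and meets $d$ in exactly one transverse point when precisely one of the primed indices agrees with its unprimed counterpart. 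Thus $d$ is adjacent in $\TransFin(\Sigma)$ to at least four of the eight triangles, which gives plenty of room to find one in $F$.

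For the converse, I would prove the contrapositive: if $\{a, b, c\}$ is not a necklace then $N(a,b,c)$ fails. Fix a finite set $F$ of at most eight vertices distinct from $a,b,c$. For each of the non-necklace types---$(0,0,0)$, $(1,0,0)$, $(1,1,0)$, or bouquet---the union $a\cup b\cup c$ contains no simple closed curve other than $a, b, c$, so every $f\in F$ admits a point $q_f\notin a\cup b\cup c$; I choose pairwise disjoint disk neighborhoods $U_f$ of $q_f$ disjoint from $a\cup b\cup c$. Starting from any vertex $d_0$ forming a 4-clique with $a, b, c$ (such a $d_0$ exists by the change of coordinates principle, for instance by pushing aside one of the three curves as in Figure~\ref{fig:FigureFacts}), I modify $d_0$ by a local surgery inside each $U_f$ replacing a subarc of $d_0\cap U_f$ by one crossing $f$ twice transversely. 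Since the $U_f$ avoid $a\cup b\cup c$, the resulting $d$ still forms a 4-clique with $a, b, c$; but $|d\cap f|\geqslant 2$ for every $f\in F$, so $d$ is adjacent to none of them, witnessing the failure of $N(a,b,c)$.

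The delicate step is the nonseparating-triangle issue in the forward direction: on surfaces with small first homology (already on the torus, where two of the eight triangles may be null-homotopic) one must check that enough nonseparating triangles remain to cover every possible configuration $(i,j,k)$ of crossings of $d$ with $a, b, c$. I would handle this by a case analysis using Fact~\ref{fact:TroisArcsSep} (which constrains how many triangles through a common vertex $p_{xy}$ can simultaneously be separating, since the nonseparating curves $a, b, c$ themselves pass through these vertices), or, when needed, by substituting a separating $T_{ijk}$ with a nearby nonseparating curve obtained via a push-aside in an annular neighborhood, chosen so that the disjointness (or single transverse intersection) property with respect to every relevant $d$ is preserved.
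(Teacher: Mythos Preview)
Your forward direction is essentially the paper's argument, but you are making the ``nonseparating'' check harder than necessary: whenever a triangle $T_{i'j'k'}$ meets $d$ in exactly one transverse point, that single intersection already \emph{certifies} that $T_{i'j'k'}$ is nonseparating (Fact~\ref{fact:NonSep}), so it is automatically in $F$ and you are done. Thus in every case except $(|d\cap a|,|d\cap b|,|d\cap c|)=(0,0,0)$ you simply exhibit one triangle meeting $d$ once; only in the $(0,0,0)$ case do you need Fact~\ref{fact:TroisArcsSep} to guarantee $F\neq\emptyset$. Your proposed fallback of pushing aside a separating triangle cannot work (the pushed curve is isotopic, hence still separating). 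You also omit the situation where $d$ passes through one of the vertices $p_{ab},p_{ac},p_{bc}$; the paper treats this explicitly (see Figure~\ref{fig:dTransverse}), and it does require a short local argument to choose which arcs to label $X,Y$.

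The real gap is in your converse direction. You write ``replacing a subarc of $d_0\cap U_f$'', but nothing guarantees that $d_0$ enters $U_f$ at all. For a non-necklace clique the complement $\Sigma\smallsetminus(a\cup b\cup c)$ can have several components (e.g.\ three disjoint curves on a genus-$2$ surface, or the $(1,0,0)$ case with $a,b$ both one-sided), and a curve obtained merely by pushing aside one of $a,b,c$ need not visit all of them; if $q_f$ lies in a component that $d_0$ misses, no local surgery in $U_f$ is possible, and any attempt to route $d_0$ to $q_f$ must cross $a\cup b\cup c$, potentially destroying the $4$-clique condition. The paper fills exactly this gap with Lemma~\ref{lem:drenccc}, which produces a $d_0$ meeting \emph{every} component of $\Sigma\smallsetminus(a\cup b\cup c)$; the proof is a short but genuine case analysis over the five non-necklace types and the one/two-sidedness of the curves. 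Once you have such a $d_0$, your surgery argument goes through (and is essentially Lemma~\ref{lem:CestLeBouquet}).
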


The proof of Proposition~\ref{prop:TypesPrecis} will occupy the next
paragraph. The following corollary complements
Proposition~\ref{prop:TypesPrecis} and provides a precise version of
Proposition~\ref{prop:Types}.
\begin{corollary}\label{coro:TypesPrecis}~

  \begin{itemize}
 	\item Property $D(a,b)$ holds if and only if the curves $a$ and
      $b$ are disjoint.
	\item Property $T(a,b)$ holds if and only if $a$ and $b$ have
      a unique intersection point and the intersection is transverse.
    \item Property $B(a,b,c)$ holds if and only if $\{a, b, c\}$
      is a $3$-clique of type bouquet.
  \end{itemize}
\end{corollary}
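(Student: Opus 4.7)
The plan is to reduce the whole corollary to Proposition~\ref{prop:TypesPrecis} by unpacking the definitions of $D$, $T$, $B$, modulo one topological construction: given two curves meeting once transversally, completing them into a necklace. Once this construction is available, the three items follow almost formally.

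For the first item: if $a\cap b=\emptyset$, then for any $d$, not all three pairwise intersections in $\{a,b,d\}$ are nonempty, so $\{a,b,d\}$ is never a necklace; by Proposition~\ref{prop:TypesPrecis}, $N(a,b,d)$ never holds, hence $D(a,b)$ holds. For the converse, if $|a\cap b|=1$ and transverse, I would exhibit a $d$ such that $\{a,b,d\}$ is a necklace, which by Proposition~\ref{prop:TypesPrecis} witnesses $N(a,b,d)$ and thus makes $D(a,b)$ fail. The second item is then immediate: $T(a,b)$ means $\{a,b\}$ is an edge and $D(a,b)$ fails, so by the first item the edge $\{a,b\}$ must be of transverse type (the only other possibility among edges being the disjoint type). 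The third item follows from the second combined with the topological dichotomy recalled in the introduction: any 3-clique whose three pairs each intersect once transversally is either a bouquet or a necklace (if two of the three intersection points coincide, say $a\cap b=a\cap c=\{p\}$, then $p\in b\cap c$, so $b\cap c=\{p\}$ as well, forcing all three to coincide). Since $B(a,b,c)$ requires the three $T$ conditions and the negation of $N$, and since $N$ characterizes necklaces by Proposition~\ref{prop:TypesPrecis}, $B$ characterizes the bouquet type.

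The main obstacle, and the only step requiring actual geometric content, is the necklace-completion claim: given $a,b$ with $a\cap b=\{p\}$ transverse, produce $d$ nonseparating with $|d\cap a|=|d\cap b|=1$, transverse, and with the three intersection points pairwise distinct. I would use the change of coordinates principle to work in a standard regular neighborhood $U$ of the figure-eight $a\cup b$. In the orientable, two-sided case, the alternating cyclic order of the four edge-ends at $p$ forces $U$ to be a once-punctured torus in which $a,b$ are the standard meridian and longitude; inside $U$, a slightly perturbed $(1,1)$-diagonal $d$ meets $a$ in a single transverse point $q\ne p$, meets $b$ in a single transverse point $r\ne p$, with $q\ne r$. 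Transplanted to $\Sigma$, the curve $d$ is simple, and nonseparating by Fact~\ref{fact:NonSep} (witnessed by its single transverse intersection with $a$), hence is a valid vertex of $\TransFin(\Sigma)$. In the non-orientable case or when $a$ or $b$ is one-sided, the regular neighborhood $U$ is a different $\chi=-1$ surface (a Möbius band with a handle, etc.), but an analogous local model supplies a diagonal-type curve with the same intersection pattern; again Fact~\ref{fact:NonSep} certifies that $d$ is nonseparating in $\Sigma$. This is the only place where a case-by-case topological argument is needed, and it is elementary given the standard local models.
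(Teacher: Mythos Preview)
Your proposal is correct and follows essentially the same route as the paper: both reduce everything to Proposition~\ref{prop:TypesPrecis} plus the single geometric step of completing a transversely intersecting pair $\{a,b\}$ to a necklace. Your dichotomy argument for the third item (if two of the three intersection points coincide then all three do) is a useful clarification that the paper leaves implicit.

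The one place where the paper is more concrete is the one-sided case of the necklace completion. You invoke a general ``analogous local model'' in the regular neighborhood, which is a bit vague; the paper simply observes that if $a$ is one-sided, then pushing $a$ aside yields a curve $c$ meeting $a$ once transversally, and one can arrange that $c$ also meets $b$ once transversally away from $p$ (see Figure~\ref{fig:OnComplete}, left). This sidesteps any need to classify the regular neighborhood. Your approach would work too, but the push-aside trick is shorter and avoids case analysis on the homeomorphism type of $U$.
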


\begin{proof}
  Let $a$ and $b$ be neighbors in the graph $\TransFin(\Sigma)$.
  Of course, if $a$ and $b$ are disjoint, then $D(a,b)$ holds: there does
  not exist a curve $d$ such that $N(a,b,d)$, since this would mean
  that $\{a,b,d\}$ is of type necklace and then, by definition, $a$ and $b$
  would intersect.
  Conversely, suppose that $a$ and $b$ are not disjoint, and let us prove
  that $D(a,b)$ does not hold, \textsl{i.e.}, let us find a curve $c$
  such that $\{a,b,c\}$ is a $3$-clique of type necklace.
  In the case when one of $a$ or $b$ is one-sided, up to exchanging the two,
  suppose $a$ is one-sided. Then we may push $a$ in order to find a curve $c$
  which makes a $3$-clique of type necklace with $a$ and $b$, see
  Figure~\ref{fig:OnComplete} (left). In the case when both $a$ and $b$ are
  two-sided, then by the change of coordinates principle,
  a regular neighborhood of $a\cup b$ is homeomorphic to a
  one-holed torus, embedded in $\Sigma$, with a
  choice of meridian and longitude
  coming from $a$ and $b$.
  In this torus, a curve $c$ with
  slope $1$ will form a $3$-clique of type necklace with $a$ and $b$,
  see Figure~\ref{fig:OnComplete}, right.
  \begin{figure}[htb]
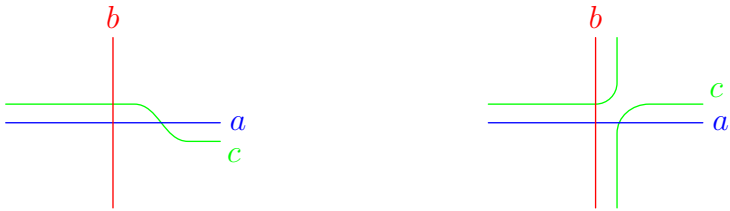

  \begin{asy}
  import geometry;
  
  picture commun, gauche, droite;

  path a = (-40, 0)--(40, 0), b = (0, -32)--(0, 32);
  path c1 = (-40, 7)--(8,7){right}..{right}(28,-7)--(40,-7);
  path c21 = (-40, 7)--(0, 7){right}..{up}(8, 15)--(8, 32);
  path c22 = (8, -32)--(8, -4){up}..{right}(20,7)--(40,7);

  draw(commun, a, blue); label(commun, "$a$", (40,0),E, blue);
  draw(commun, b, red); label(commun, "$b$", (0,32), N, red);
  draw(gauche, c1, green); label(gauche,"$c$", (40,-7),SE, green);
  draw(droite, c21, green); draw(droite, c22, green);
  label(droite,"$c$", (40,7),NE, green);
  
  add(gauche, (-90,0)); add(droite, (90,0));
  add(commun, (-90,0)); add(commun, (90,0));
  \end{asy}
  \caption{Completing $(a,b)$ to a $3$-clique of type necklace}
  \label{fig:OnComplete}
  \end{figure}
  This proves the first point.

  The second point is a straightforward
  consequence of the first, and the third simply follows from the second
  point together with Proposition~\ref{prop:TypesPrecis}.
 \end{proof}

\subsection{Proof of Proposition~\ref{prop:TypesPrecis}}

The following lemma is a key step in the proof of the direct implication in Proposition~\ref{prop:TypesPrecis}.
\begin{lemma}\label{lem:drenccc} 
  Let $\{a,b,c\}$ be a $3$-clique of $\TransFin(\Sigma)$ which is not
  of type necklace. Then there exists a vertex $d$ of
  $\TransFin(\Sigma)$ such that $\{a,b,c,d\}$ is a $4$-clique of
  $\TransFin(\Sigma)$, and such that $d$ meets every connected
  component of $\Sigma\smallsetminus(a\cup b\cup c)$.
\end{lemma}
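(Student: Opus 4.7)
The plan is to perform a case analysis on the type of the $3$-clique $\{a,b,c\}$. By the change of coordinates principle, a non-necklace $3$-clique is, up to a self-homeomorphism of $\Sigma$, of one of four types:
\begin{enumerate}
\item[(i)] the three curves are pairwise disjoint;
\item[(ii)] exactly one pair meets, at a single transverse point;
\item[(iii)] exactly two pairs meet, at two distinct transverse points;
\item[(iv)] the bouquet, where all three pairs meet at a single common point $p$.
\end{enumerate}
In each case we may work with a convenient smooth model of $a\cup b\cup c$ provided by that principle, and reduce the problem to an explicit construction of $d$.

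The construction follows a common scheme. Fix a thin regular neighborhood $R$ of $G := a\cup b\cup c$, so that the components of $\Sigma\smallsetminus G$ correspond bijectively to the components of $\Sigma\smallsetminus R$. In each such component $U$ I would pick a simple arc lying in $\overline{U}$ with both endpoints on $\partial R$, and glue these arcs together via a few short ``bridge'' arcs inside $R$, each bridge crossing at most one of $a,b,c$ transversely, and chosen globally so that each of $a$, $b$, $c$ is crossed at most once in total. The result is a simple closed curve $d$ meeting every component of $\Sigma\smallsetminus G$, with $\{d,a\}$, $\{d,b\}$, $\{d,c\}$ all edges of $\TransFin(\Sigma)$. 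To ensure that $d$ is itself a vertex of $\TransFin(\Sigma)$, \ie nonseparating, I would invoke Fact~\ref{fact:NonSep} by arranging the construction so that inside some component $U$, the corresponding arc of $d$ together with a short segment of $\partial R$ has transverse intersection number one with a push-aside of one of the curves $a$, $b$, $c$ (see Figure~\ref{fig:FigureFacts}); this supplies $d$ with a dual curve.

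The main obstacle is to honor simultaneously the constraints that $d$ visits every component of $\Sigma\smallsetminus G$ and crosses each of $a, b, c$ at most once, and this is where the non-necklace hypothesis is sharp. In a necklace, $\Sigma\smallsetminus G$ contains a ``central triangle'' component bounded by exactly one arc from each of $a, b, c$, so visiting it would force $d$ to enter and exit through two different curves among $\{a,b,c\}$, using two of its three allowed crossings on that single component and leaving too few to reach the other components. In each non-necklace case no such triangular component exists, and one can chain the complementary components by bridges that cross each of $a, b, c$ at most once. The most delicate instance is case (iv), the bouquet: the six local sectors at $p$ are distributed among the components of $\Sigma\smallsetminus G$ in various patterns, and a local analysis near $p$ is needed to verify that a well-chosen bridge passing very close to $p$ (or, alternatively, through $p$ itself, counting as a single shared transverse crossing with each of $a,b,c$) permits to close up the construction while respecting the intersection budget. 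Cases (i)--(iii) are then handled with lighter combinatorics, since $G$ has at most two intersection vertices and the complementary components are pairwise adjacent along boundary arcs lying on a single curve, allowing the bridges to be chosen greedily.
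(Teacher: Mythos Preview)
Your case decomposition matches the paper's, but your execution is both more complicated and incomplete, and in one place wrong. The paper's main device, which you never invoke, is simply to take $d$ as a push-aside of one of $a,b,c$. This makes $d$ automatically nonseparating (it is isotopic to the pushed curve) and makes $\{a,b,c,d\}$ a $4$-clique by inspection, so your whole apparatus for manufacturing a dual curve via Fact~\ref{fact:NonSep} and managing an ``intersection budget'' becomes unnecessary. What remains is to check that the push-aside meets every component: in cases (iii) and (iv) the paper observes that every component of $\Sigma\smallsetminus(a\cup b\cup c)$ has all of $a$ in its closure (travel along $b$ or $c$ until you hit $a$), so any nearby parallel of $a$ visits every component. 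The sidedness of $a$, $b$, $c$ decides which curve to push and how many local complementary regions arise; your sketch never addresses sidedness, and without it the promised ``local analysis near $p$'' cannot be completed.

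Your ``through $p$ itself'' option in case (iv) is actually incorrect. If $d$ meets $a\cup b\cup c$ only at $p$, then $d\smallsetminus\{p\}$ is a single connected arc lying in $\Sigma\smallsetminus(a\cup b\cup c)$, hence in exactly one component; so this $d$ visits only one component, whereas there can be up to three (attained when $a$, $b$, $c$ are all one-sided). The ``very close to $p$'' alternative is salvageable and is essentially the paper's push-aside, but making it work is precisely the sidedness case analysis you omit. Finally, in case (i) your scheme presupposes that at most three bridges suffice; this is true, but only because $\Sigma\smallsetminus(a\cup b\cup c)$ has at most three components, a fact that uses the nonseparating hypothesis on each curve and that you do not establish.
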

Before entering the proof, we note that we cannot remove the hypothesis that
$\{a, b, c\}$ is not of type necklace. Indeed, in the flat torus
$\Sigma = \mathbb{R}^2/\mathbb{Z}^2$, consider three closed geodesics
$a,b,c$ respectively directed by $(1,0)$, $(0,1)$, and $(1, 1)$.
By pushing $c$ aside if necessary, we obtain a $3$-clique of type necklace.
The complement of $a \cup b \cup c$ in $\Sigma$ has three connected components,
and there is no curve $d$ satisfying the conclusion of the lemma.
\begin{proof}
  In all the proof, we will denote
  $\Sigma'=\Sigma\smallsetminus(a\cup b\cup c)$.
  Up to permuting the curves $a$, $b$ and $c$, we may suppose that the triple
  of cardinals of intersections, $(|a\cap b|, |a\cap c|, |b\cap c|)$, equals
  $(1,1,1)$, or $(1,1,0)$, or $(1,0,0)$, or~$(0,0,0)$. We will deal with
  these cases separately.
  
  Let us begin with the case $(0, 0, 0)$. If $\Sigma'$ is connected,
  then any curve $d$ making a $4$-clique with $(a,b,c)$ satisfies the Lemma.
  Such a curve can be found, for example, by pushing $a$ aside.
  If $\Sigma'$ has two connected components, denote them by $\Sigma_1$ and
  $\Sigma_2$. Since $a$, $b$ and $c$ are each nonseparating, at least two of
  the curves $a, b, c$ (say, $a$ and $b$) correspond to boundary components of
  both $\Sigma_1'$ and $\Sigma_2'$.
  Choose one point $x_a$ in $a$ and one point $x_b$ in $b$. For $i=1,2$, there
  is an arc $\gamma_i$ connecting $x_a$ to $x_b$ in $\Sigma_i$, and disjoint
  from the boundary of $\Sigma_i$ except at its ends. Then the
  curve $d=\gamma_1\cup\gamma_2$ satisfies the lemma
  (see Figure~\ref{fig:(0,0,0)}, left).
  It may also happen that $\Sigma'$ has three connected components, in which
  case we find a curve $d$ exactly in the same way, see
  Figure~\ref{fig:(0,0,0)}, right.
  \begin{figure}[htb]
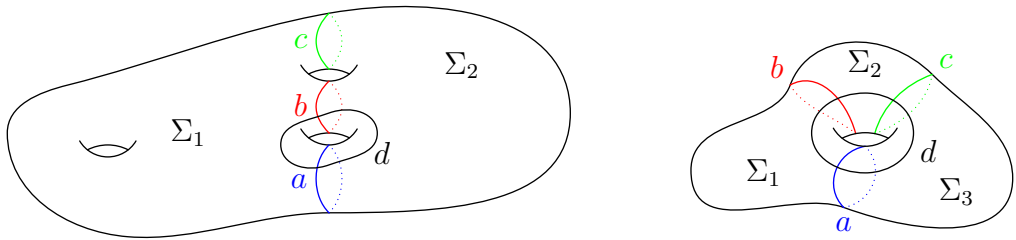

  \begin{asy}
  import geometry;
  
  real r = 1.5;

  picture trou, gauche, droite;
  path batrou = (-7,4)..(0,0)..(7,4);
  path hautrou = point(batrou,0.35)..(0,3)..point(batrou,1.65);
  draw (trou, batrou); draw(trou, hautrou);
  add(gauche, scale(r)*trou, r*(0,17)); add(gauche, scale(r)*trou, r*(0, 33));
  add(gauche, scale(r)*trou, r*(-55, 14));
  path contourg = (0,0){left}..(-50,-6)..(-80,20){up}..(-65, 32)..
  (0,50){dir(10)}..(60, 25){down}..cycle;
  point A = (-8, -28), B = (-28,18), C = (25, 22);
  path contourd = A{dir(160)}..(-65, -14){up}..B{dir(70)}..C{dir(-45)}..(55,-17){down}..cycle;
  
  path ga1 = (0,0){dir(135)}..(-3.2, 8)..(0,17){dir(45)};
  path ga2 = (0,0){dir(45)}..(3.2, 8)..(0,17){dir(135)};
  path gb1 = (0,20){dir(135)}..(-3.2, 27)..(0,33){dir(45)};
  path gb2 = (0,20){dir(45)}..(3.2, 27)..(0,33){dir(135)};
  path gc1 = (0,36){dir(135)}..(-3.2,43)..(0,50){dir(45)};
  path gc2 = (0,36){dir(45)}..(3.2,43)..(0,50){dir(135)};
  
  draw (gauche, scale(r)*contourg);
  draw (droite, contourd);
  draw (gauche, scale(r)*ga1, blue); draw(gauche, scale(r)*ga2, dotted+blue);
  label(gauche, "$a$", r*(-3,8), W, blue);
  draw (gauche, scale(r)*gb1, red); draw(gauche, scale(r)*gb2, dotted+red);
  label(gauche, "$b$", r*(-3,27), W, red);
  draw (gauche, scale(r)*gc1, green); draw(gauche, scale(r)*gc2, dotted+green);
  label(gauche, "$c$", r*(-3,43), W, green);
  label(gauche, "$\Sigma_1$", r*(-35, 20));
  label(gauche, "$\Sigma_2$", r*(33,37));
  path gd = (0,25){dir(200)}..(-12,17)..(0,12){dir(20)}..(12,20)..cycle;
  draw(gauche, scale(r)*gd);
  label(gauche, "$d$", r*(12,20), dir(-75));

  real R = 1.7;
  path da1 = A{dir(135)}..(0,-5){dir(10)};
  path da2 = A{dir(10)}..(0,-5){dir(135)};
  point PB = (0,-5)+R*point(hautrou, 0.6);
  point PC = (0,-5)+R*point(hautrou, 1.4);
  path db1 = B{dir(30)}..PB{dir(-75)};
  path db2 = B{dir(-60)}..PB{dir(-40)};
  path dc1 = PC{dir(75)}..C{dir(20)};
  path dc2 = PC{dir(20)}..C{dir(80)};
  path dd = (0,-15){right}..(18,0){up}..(0,15){left}..(-20,0){down}..cycle;
  draw (droite, da1, blue); draw(droite, da2, dotted+blue);
  draw (droite, db1, red); draw(droite, db2, dotted+red);
  draw (droite, dc1, green); draw(droite, dc2, dotted+green);
  draw (droite, dd);
  label (droite, "$a$", A, S, blue); label(droite, "$b$", B, NW, red);
  label (droite, "$c$", C, NE, green); label(droite, "$d$", (18,0), SE);
  label (droite, "$\Sigma_1$", (-38, -15)); label(droite, "$\Sigma_2$", (0, 26));
  label (droite, "$\Sigma_3$", (34, -22));
  
  add (droite, scale(R)*trou, (0,-5));
  add(gauche, (-80,0)); add(droite, (120,30));
  \end{asy}
  \caption{Finding $d$ in the case $(0,0,0)$}
  \label{fig:(0,0,0)}
  \end{figure}
  
  Next we deal with the case of intersections $(1, 0, 0)$. In this case,
  $a$ and $b$ intersect transversely, once, and $c$ is disjoint from $a\cup b$.
  By hypothesis, the curve $c$ is (globally) nonseparating.
  Consider the union $a\cup b$. If $a$ or $b$ is two-sided, then
  $a\cup b$ does not disconnect its regular neighborhoods.
  This is seen by travelling
  along a small band on one side of $a\cup b$,
  (see Figure~\ref{fig:DeuxCourbes}, left).
  In this case, $\Sigma'$ cannot have more connected components than
  $\Sigma\smallsetminus c$, hence $\Sigma'$ is connected, and any
  curve $d$ obtained by pushing $c$, as in the preceding case, satisfies
  the lemma. If both $a$ and $b$ are one-sided, then $a\cup b$ is locally
  disconnecting, so $\Sigma'$ may have up to two connected components.
  In this case, a curve $d$ obtained by pushing
  $a$ satisfies the lemma (See Figure~\ref{fig:DeuxCourbes}, right).
  \begin{figure}[htb]
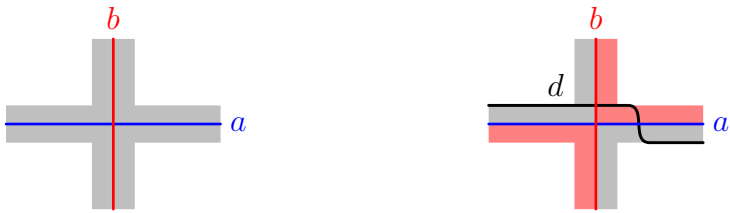

  \begin{asy}
  import geometry;
  picture commun, gauche, droite;

  path a = (-40, 0)--(40, 0), b = (0, -32)--(0, 32);
  path region1 = (-40, 0)--(0,0)--(0,32)--(-8,32)--(-8,7)--(-40,7)--cycle;
  path region2 = reflect((0,0),(0,1))*region1;
  path region3 = rotate(180)*region1, region4 = rotate(180)*region2;
  
  path d = (-40, 7)--(12,7){right}..{right}(20,-7)--(40,-7);

  fill(gauche, region1, mediumgray); fill(gauche, region3, mediumgray);
  fill(droite, region1, mediumgray); fill(droite, region3, mediumgray);
  fill(gauche, region2, mediumgray); fill(droite, region2, lightred);
  fill(gauche, region4, mediumgray); fill(droite, region4, lightred);
  draw(commun, a, 1pt+blue); label(commun, "$a$", (40,0),E, blue);
  draw(commun, b, 1pt+red); label(commun, "$b$", (0,32), N, red);
  draw(droite, d, 1.1pt+black); label(droite,"$d$", (-15,7),N);
  
  add(gauche, (-90,0)); add(droite, (90,0));
  add(commun, (-90,0)); add(commun, (90,0));
  \end{asy}
  \caption{Finding $d$ in case $(1,0,0)$}
  \label{fig:DeuxCourbes}
  \end{figure}
  
  Now assume we are in the case $(1, 1, 1)$ or $(1, 1, 0)$.
  Since $\{a, b, c\}$ is not of type necklace, note that  in any
  case $b$ and $c$ do not meet outside $a$. We first treat the sub-case
  when $a$ is two-sided. For this we consider any curve $d$ obtained by
  pushing $a$ aside, and we claim that $d$ meets every connected component
  of $\Sigma'$. Indeed, let $C$ be such a component.
  Of course the closure of $C$ meets $a$ or $b$ or $c$.
  Since both $b$ and $c$ meet $a$, it actually has to meet $a$, as we can see
  by traveling along $b$ or $c$ in $C$. More precisely, by following $b$
  or $c$ in both directions, we see that $C$ meets any neighborhood of $a$
  from both sides. Thus it meets $d$.

  It remains to treat the sub-case when $a$ is one-sided, first for the
  $(1,1,1)$ case, and then for the $(1,1,0)$ case. In the $(1,1,1)$ case, the
  curves $a$, $b$, $c$ play symmetric roles, and by the above argument it
  just remains to consider the case when they are all one-sided.
  Then the situation is depicted on Figure~\ref{fig:(1,1,1)Et(1,1,0)}, left:
  $a \cup b \cup c$ disconnects its regular neighborhoods into three connected
  components, and the figure shows a curve $d$, obtained by pushing $a$ aside,
  which intersects all three components and such that
  $\{a, b, c, d \}$ is a 4-clique.
  In the remaining case the curves
  $b$ and $c$ play symmetric roles,
  and there are three different cases to
  consider, regarding whether $b$ and $c$ are one or two-sided.
  These three cases are pictured in Figure~\ref{fig:(1,1,1)Et(1,1,0)},
  and in each case, we obtain $d$ by pushing $a$ aside.
  \begin{figure}[htb]
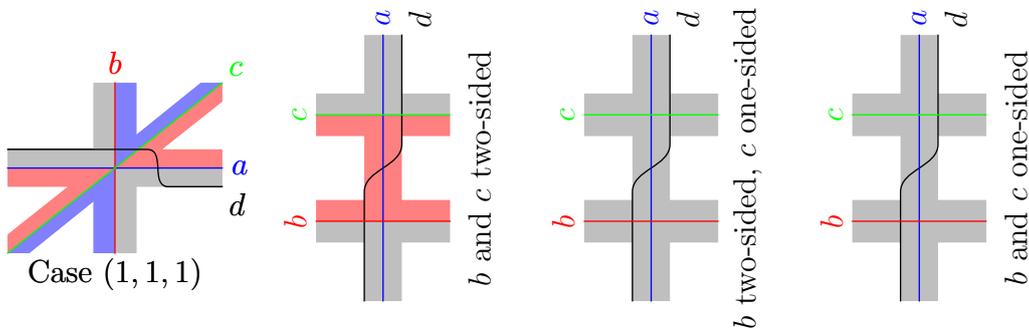

  \begin{asy}
  import geometry;
  
  picture gauche;
  path a = (-40, 0)--(40, 0), b = (0, -32)--(0, 32), c = (-40,-32)--(40,32);
  path region1 = (-40, 0)--(0,0)--(0,32)--(-8,32)--(-8,7)--(-40,7)--cycle;
  path region2 = (-40,0)--(-40,-7)--(-17.5,-7)--(-40,-25)--(-40,-32)--(0,0)--cycle;
  path region3 = (-40,-32)--(-32,-32)--(-8,-12.5)--(-8,-32)--(0,-32)--(0,0)--cycle;
  path region4 = rotate(180)*region1;
  path region5 = rotate(180)*region2, region6 = rotate(180)*region3;
  
  path d1 = (-40,7)--(40,7);
  path d2 = (-40, 7)--(12,7){right}..{right}(20,-7)--(40,-7);
  
  fill(gauche, region1, mediumgray); fill(gauche, region2, lightred);
  fill(gauche, region3, lightblue); fill(gauche, region4, mediumgray);
  fill(gauche, region5, lightred); fill(gauche, region6, lightblue);
  draw(gauche, a, blue); label(gauche, "$a$", (40,0),E, blue);
  draw(gauche, b, red); label(gauche, "$b$", (0,32), N, red);
  draw(gauche, c, green); label(gauche, "$c$", (40,32), NE, green);
  
  draw(gauche, d2); label(gauche,"$d$", (40,-7),SE);
  label(gauche, "Case $(1,1,1)$", (0,-40));
  
  add(gauche, (-150,0));
  add(gauche, (-150,0));
  
  //
  
  picture commun, un, deux, trois, quatre, cinq, six;
  path a = (-50,0)--(50,0), b = (-20,-25)--(-20,25), c = (20, -25)--(20,25);
  path d1 = (-50,7)--(50,7);
  path d2 = (-50, 7)--(-9,7){right}..{right}(9,-7)--(50,-7);
  path region1 = (-50,0)--(-20,0)--(-20,25)--(-28,25)--(-28,7)--(-50,7)--cycle;
  path region2 = reflect((0,0),(1,0))*region1;
  path region3 = (-20,-25)--(-12,-25)--(-12,-7)--(12,-7)--(12,-25)--(20,-25)--(20,0)--(-20,0)--cycle;
  path region4 = rotate(180)*region1, region5 = rotate(180)*region2;
  path region6 = rotate(180)*region3;
  
  //
  //
  //
  //
  //
  //
  //
  //
  //
  fill(quatre, region1, mediumgray); fill(quatre, region2, mediumgray);
  fill(quatre, region3, lightred); fill(quatre, region4, mediumgray);
  fill(quatre, region5, mediumgray); fill(quatre, region6, lightred);
  fill(cinq, region1, mediumgray); fill(cinq, region2, mediumgray);
  fill(cinq, region3, mediumgray); fill(cinq, region4, mediumgray);
  fill(cinq, region5, mediumgray); fill(cinq, region6, mediumgray);
  fill(six, region1, mediumgray); fill(six, region2, mediumgray);
  fill(six, region3, mediumgray); fill(six, region4, mediumgray);
  fill(six, region5, mediumgray); fill(six, region6, mediumgray);

  draw(commun, a, blue); label(commun, "$a$", (50,0), E, blue);
  draw(commun, b, red); label(commun, "$b$", (-20,25), N, red);
  draw(commun, c, green); label(commun, "$c$", (20,25), N, green);
  //
  //
  //
  draw(quatre, d2); label(quatre, "$d$", (50,-7), SE);
  draw(cinq, d2); label(cinq, "$d$", (50,-7), SE);
  draw(six, d2); label(six, "$d$", (50,-7), SE);
  //
  //
  //
  label(quatre, "$b$ and $c$ two-sided", (0,-36));
  label(cinq, "$b$ two-sided, $c$ one-sided", (0,-36));
  label(six, "$b$ and $c$ one-sided", (0,-36));
  //
  add(rotate(90)*quatre, (-50, 0)); add(rotate(90)*cinq, (50, 0)); add(rotate(90)*six, (150, 0));
  //
  add(rotate(90)*commun, (-50, 0)); add(rotate(90)*commun, (50, 0)); add(rotate(90)*commun, (150, 0));
  \end{asy}
  \caption{Finding $d$ in cases $(1,1,1)$ and $(1,1,0)$}
  \label{fig:(1,1,1)Et(1,1,0)}
  \end{figure}
\end{proof}
We deduce the following.
\begin{lemma}\label{lem:CestLeBouquet}
  Let $\{a,b,c\}$ be a $3$-clique of $\TransFin(\Sigma)$, not of type
  necklace. Let $(\alpha_1,\ldots,\alpha_n)$ be a finite family
  of vertices of $\TransFin(\Sigma)$, all distinct from $a$,
  $b$ and~$c$. Then there exists a vertex $d$ of $\TransFin(\Sigma)$, such that
  \begin{itemize}
  \item[$\bullet$] $\{a,b,c,d\}$ is a $4$-clique of $\TransFin(\Sigma)$,
  \item[$\bullet$] for all $j\in\{1,\ldots,n\}$, the intersection
    $d\cap \alpha_j$ is infinite; in particular, $\{d, \alpha_{j}\}$ is not an
    edge of $\TransFin(\Sigma)$.
  \end{itemize}
\end{lemma}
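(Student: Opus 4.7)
The plan is to apply Lemma~\ref{lem:drenccc} to obtain a curve $d_0$ satisfying the first conclusion, then modify $d_0$ locally inside the connected components of $\Sigma \setminus (a \cup b \cup c)$ to introduce infinitely many intersections with each $\alpha_j$, preserving both the $4$-clique property and nonseparability.

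First, I apply Lemma~\ref{lem:drenccc} to obtain $d_0$ with $\{a, b, c, d_0\}$ a $4$-clique and $d_0$ meeting every component of $\Sigma' := \Sigma \setminus (a \cup b \cup c)$. For each $j$, since $\alpha_j$ is a nonseparating simple closed curve distinct from $a$, $b$, $c$, it is not contained in $a \cup b \cup c$ and therefore meets some component of $\Sigma'$; I pick one and call it $C(j)$. For each component $C$ of $\Sigma'$, set $J(C) = \{j : C(j) = C\}$; the modifications will be performed inside the finitely many components $C$ with $J(C) \neq \emptyset$, independently.

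For each such $C$, the next step is to produce a topological disk $D(C) \subset C$ meeting $d_0$ in a single sub-arc $\sigma_C$ and containing, for every $j \in J(C)$, a sub-arc $\beta_j$ of $\alpha_j$. One constructs $D(C)$ as a thin regular neighborhood of a finite tree embedded in $C$ whose edges include one short segment of $d_0$ and, for each $j \in J(C)$, a short segment of $\alpha_j$, joined by connecting paths chosen to avoid $d_0$ elsewhere and the disks previously built in other components; since $C$ is a connected surface, such a tree and neighborhood exist, and pairwise disjointness of the $D(C)$ is easy to arrange. Inside each $D(C)$, I replace $\sigma_C$ by a Jordan arc $\sigma'_C$ with the same endpoints that crosses every $\beta_j$ infinitely many times; concretely, near each $\beta_j$ one routes $\sigma'_C$ along a damped sine curve, for instance $t \mapsto (t, (t - t_0) \sin(\pi / (t - t_0)))$ in local coordinates where $\beta_j = \{y = 0\}$ and the accumulation point is a chosen interior point $p_j \in \beta_j$, which produces an injective continuous arc with infinitely many zeros.

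Let $d$ be the curve obtained from $d_0$ by performing this substitution in every $D(C)$. Then $d$ is a simple closed curve, it agrees with $d_0$ outside the disks $D(C)$, so $d \cap (a \cup b \cup c) = d_0 \cap (a \cup b \cup c)$ and $\{a, b, c, d\}$ remains a $4$-clique; moreover $d \cap \alpha_j$ is infinite for every $j$ by construction, which in particular prevents $\{d, \alpha_j\}$ from being an edge. Finally, $d$ is nonseparating: inside each $D(C)$ the simple arcs $\sigma_C$ and $\sigma'_C$ share their endpoints, so by the Jordan--Schoenflies theorem there is an ambient isotopy of $D(C)$ fixing $\partial D(C)$ that takes $\sigma_C$ to $\sigma'_C$; gluing these isotopies with the identity outside the disks produces an ambient isotopy of $\Sigma$ from $d_0$ to $d$, so $d$ is isotopic to $d_0$ and hence nonseparating. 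The main technical obstacle lies in the careful simultaneous construction of the disks $D(C)$ (one sub-arc of $d_0$, one sub-arc of each $\alpha_j$ with $j \in J(C)$, pairwise disjointness) and of the Jordan arcs $\sigma'_C$ realizing the prescribed infinite intersections while remaining simple.
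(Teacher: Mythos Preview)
Your proof is correct and follows essentially the same approach as the paper: start from the curve $d_0$ provided by Lemma~\ref{lem:drenccc}, observe that each $\alpha_j$ has a subarc in $\Sigma' = \Sigma\smallsetminus(a\cup b\cup c)$, and modify $d_0$ by local surgeries inside $\Sigma'$ so that it meets each such subarc infinitely often. The paper organises the surgeries iteratively (first force $d_0$ through a chosen point $x_j\in\beta_j$ for each $j$, then wiggle near each $x_j$), whereas you package them into disk-supported replacements grouped by component and invoke Jordan--Schoenflies to certify that $d$ stays isotopic to $d_0$; these are cosmetic differences in bookkeeping rather than a genuinely different route.
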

As a corollary, we get the direct implication in Proposition~\ref{prop:TypesPrecis}.
\begin{corollary}
  If $\{a,b,c\}$ is a $3$-clique not of type necklace, then $N(a,b,c)$ does not
  hold.
\end{corollary}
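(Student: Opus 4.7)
The plan is to derive this corollary directly from Lemma~\ref{lem:CestLeBouquet} by a short argument by contradiction. Suppose $\{a,b,c\}$ is a $3$-clique of $\TransFin(\Sigma)$ which is not of type necklace, and assume toward a contradiction that $N(a,b,c)$ holds. By the definition of $N(a,b,c)$, we are then given a finite set $F = \{\alpha_{1}, \ldots, \alpha_{n}\}$ of at most $8$ vertices of $\TransFin(\Sigma)$, all distinct from $a$, $b$ and $c$, with the property that every vertex $d$ completing $\{a,b,c\}$ to a $4$-clique must be joined by an edge of $\TransFin(\Sigma)$ to at least one $\alpha_j$.

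The goal is then to contradict this by producing a single vertex $d$ which completes $\{a,b,c\}$ to a $4$-clique but fails to be adjacent to any of the $\alpha_{j}$. This is exactly what Lemma~\ref{lem:CestLeBouquet} provides: applied to the non-necklace $3$-clique $\{a,b,c\}$ and to the finite family $(\alpha_1, \ldots, \alpha_n)$ (which is legitimate input since each $\alpha_j$ is distinct from $a$, $b$ and $c$), it yields a vertex $d$ such that $\{a,b,c,d\}$ is a $4$-clique of $\TransFin(\Sigma)$ and such that $d \cap \alpha_j$ is infinite for every $j \in \{1, \ldots, n\}$. Since an infinite intersection cannot be a topologically transverse single point, none of the pairs $\{d, \alpha_j\}$ is an edge of $\TransFin(\Sigma)$, contradicting the defining property of~$F$.

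Hence $N(a,b,c)$ cannot hold, which is the desired conclusion. There is essentially no obstacle here: all the geometric work has already been carried out in Lemmas~\ref{lem:drenccc} and~\ref{lem:CestLeBouquet}, and the corollary is just the logical repackaging of Lemma~\ref{lem:CestLeBouquet} in the language of the property $N$. The only thing worth emphasizing in the write-up is that the bound ``at most $8$'' in the definition of $N$ plays no role in this direction of the equivalence, since the lemma handles an arbitrary finite family; the bound will only matter for the converse implication in Proposition~\ref{prop:TypesPrecis}.
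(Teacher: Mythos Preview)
Your proof is correct and is exactly the argument the paper has in mind: the corollary is stated immediately after Lemma~\ref{lem:CestLeBouquet} with no separate proof, precisely because it is the direct contrapositive/contradiction you spell out. Your remark that the bound $8$ is irrelevant for this direction is also accurate.
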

%
\begin{proof}[Proof of Lemma~\ref{lem:CestLeBouquet}]
  The hypotheses that $\{a,b,c\}$ is not of type necklace, and
  $\alpha_j\not\in\{a,b,c\}$, impose that for every $j$,
  the curve $\alpha_j$ is not contained in the union $a\cup b\cup c$.
  Hence, there exists a small subarc $\beta_j\subset\alpha_j$
  lying in the complement of $a\cup b\cup c$, and we may
  further suppose that these $n$ arcs are pairwise disjoint,
  and choose a point $x_j$ in $\beta_j$ for each $j$.

  Now, let $d_0$ be a vertex of $\NCfin{\leqslant 1}(\Sigma)$ as from
  Lemma~\ref{lem:drenccc}. Since $d_0$ meets every component of
  $\Sigma'= \Sigma\smallsetminus(a\cup b\cup c)$,
  we may perform a surgery on $d_0$, far from $a\cup b\cup c$,
  to obtain a new curve $d_1$ such that $\{a,b,c,d_1\}$ is still a
  $4$-clique, and $d_1$ still meets every component of
  $\Sigma'$, and $d_1$ passes through $x_1$. We may iterate this
  process, to get a curve $d_n$ which passes through $x_j$
  for every $j$, and such that $\{a,b,c,d_n\}$ is a $4$-clique.
  Finally, we may perform a last surgery on $d_n$, in the
  neighborhood of all $x_j$, in order to obtain a curve $d$
  such that for each $j$, $|\beta_j\cap d|$ is
  infinite.
\end{proof}

It remains to prove the converse implication in
Proposition~\ref{prop:TypesPrecis}, which we restate as
Lemma~\ref{lem:CestLeCollier}.
\begin{lemma}\label{lem:CestLeCollier}
  Let $\{a,b,c\}$ be a $3$-clique of $\TransFin(\Sigma)$ of type
  necklace. Then there exists a finite set $F$, of at most 8 vertices of
  $\TransFin(\Sigma)$ all distinct from $a$, $b$ and $c$, and
  such that every $d$ such that $\{a,b,c,d\}$ is a $4$-clique of
  $\TransFin(\Sigma)$ is connected by an edge to some element
  of~$F$.
\end{lemma}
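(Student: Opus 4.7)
My plan is to take $F$ as the set of nonseparating simple closed curves lying inside the graph $a \cup b \cup c$ other than $a, b, c$ themselves. Label the three pairwise intersection points $p_{ab}, p_{ac}, p_{bc}$, and split each curve into two arcs at its two triple points: $a = a_1 \cup a_2$, $b = b_1 \cup b_2$, $c = c_1 \cup c_2$. The simple cycles in the graph $a\cup b \cup c$ (which has $3$ vertices and $6$ edges), other than the three double-edge cycles $a, b, c$, are exactly the eight ``triangles''
\[
\gamma_{ijk} := a_i \cup b_j \cup c_k, \qquad (i,j,k)\in\{1,2\}^3.
\]
Each $\gamma_{ijk}$ is a topological simple closed curve in $\Sigma$ (with three corners at the triple points), distinct from $a, b, c$. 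I then set $F := \{\gamma_{ijk} : \gamma_{ijk} \text{ is nonseparating in }\Sigma\}$, so $|F|\le 8$ and every element of $F$ is a vertex of $\TransFin(\Sigma)$.

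Fix any $d$ such that $\{a,b,c,d\}$ is a $4$-clique; I need to produce some $\gamma \in F$ adjacent to $d$ in $\TransFin(\Sigma)$. Assume first that $d$ avoids the three triple points. Whenever $d \cap a \ne \emptyset$, say $d \cap a = \{q_a\}$, let $i_0 \in \{1,2\}$ be such that $q_a \in a_{i_0}$; similarly define $j_0, k_0$ when those intersections exist. Then $|d \cap \gamma_{ijk}|$ equals the number of coordinates in $(i,j,k)$ matching $(i_0,j_0,k_0)$, and each such intersection is topologically transverse since it occurs in the interior of an arc where $\gamma_{ijk}$ locally coincides with $a$, $b$, or~$c$. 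Thus $\{d,\gamma_{ijk}\}$ is an edge precisely when at most one coordinate matches; when $d$ crosses all three of $a,b,c$, this gives four ``edge-making'' triangles: the opposite one $\gamma_{i_1 j_1 k_1}$ with $i_1 = 3-i_0$, etc., and its three coordinate-neighbors.

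To show that at least one edge-making triangle is nonseparating, I use $\mathbb{F}_2$-homology. Because $a_i + a_{i'} = a$ as $1$-chains over $\mathbb{F}_2$ when $i \ne i'$, one has $[\gamma_{ijk}] + [\gamma_{i'jk}] = [a]$ in $H_1(\Sigma; \mathbb{F}_2)$, and analogously in the other two coordinates. A simple closed curve on $\Sigma$ is nonseparating if and only if its class in $H_1(\Sigma; \mathbb{F}_2)$ is nonzero. If the four edge-making triangles were all separating, their $\mathbb{F}_2$-classes would all vanish, and the three pairwise differences would force $[a]=[b]=[c]=0$, contradicting the nonseparating-ness of $a,b,c$. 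So some edge-making triangle lies in $F$. The sub-cases where $d$ meets only one or two of $a,b,c$ give six or eight edge-making triangles and are handled identically.

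Finally, suppose $d$ passes through a triple point; the constraint $|d\cap a|, |d\cap b|\le 1$ forces $d$ to meet at most one such point, say $p_{ab}$, in which case $d\cap a = d\cap b = \{p_{ab}\}$. A local analysis—using that $d$ is topologically transverse to both $a$ and $b$ at $p_{ab}$, so it enters and exits through two opposite sectors among the four sectors of $a\cup b$ at $p_{ab}$—shows that $p_{ab}\in d\cap\gamma_{ijk}$ is a topologically transverse intersection precisely when $(i,j)$ labels one of those two sectors. Combined with the count on the $c$ side, there are four edge-making transverse triangles when $d$ avoids $c$ and two when $d$ meets $c$. The first subcase is handled by the previous homology argument ($[c]=0$ gives a contradiction); in the second, the two candidates differ by $[a]+[b]$ in $H_1(\Sigma; \mathbb{F}_2)$, and on an orientable surface the identity $|a\cap b|=1$ combined with the vanishing of the self-intersection $[a]\cdot[a]$ in $\mathbb{F}_2$ forces $[a]\ne[b]$, so at least one candidate is in $F$. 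The main obstacle I foresee is the non-orientable case, where one-sided curves satisfy $[a]\cdot[a]=1$ in $\mathbb{F}_2$ and this trick can fail; I would complete $F$ by adjoining a suitably pushed-off variant of any offending separating triangle, keeping $|F|\le 8$.
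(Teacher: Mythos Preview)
Your set $F$ is exactly the one used in the paper, and the case analysis is organized the same way, so the overall approach coincides. The difference lies in how you certify that some edge-making triangle is nonseparating. You use $\mathbb{F}_2$-homology throughout; the paper instead leans on the elementary observation (its Fact~\ref{fact:NonSep}) that a simple closed curve meeting some curve in exactly one transverse point is automatically nonseparating. Since in every case except $(|a\cap d|,|b\cap d|,|c\cap d|)=(0,0,0)$ you produce a triangle $\gamma$ with $|d\cap\gamma|=1$ transverse, that triangle is nonseparating for free, and the homology bookkeeping is unnecessary there. In the $(0,0,0)$ case your homology-difference argument is essentially the same as the paper's use of Fact~\ref{fact:TroisArcsSep}.

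This matters for your final paragraph. In the subcase where $d$ passes through $p_{ab}$ and also meets $c$, your two candidate triangles each meet $d$ in \emph{exactly one transverse point} (namely $p_{ab}$), so both are nonseparating regardless of orientability. There is no need to argue that $[a]+[b]\neq 0$, and the worry about one-sided curves with $[a]\cdot[a]=1$ is a red herring: your proposed workaround of adjoining pushed-off variants is unnecessary, and the bound $|F|\le 8$ holds as stated on all surfaces. The same remark applies to the subcase where $d$ hits $p_{ab}$ but avoids $c$: all four candidates already have a single transverse intersection with $d$. So your proof is complete once you replace the homology step in the triple-point case by the one-line observation above; as written, the nonorientable case is left with a self-imposed gap.
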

In fact this set $F$ can be chosen explicitely, with cardinal at most~8,
as follows. If $\{a,b,c\}$ is a $3$-clique of type necklace, then there exists
a family of arcs $(x, X, y, Y, z, Z)$, all embedded in $\Sigma$, such that
$a=x\cup X$, $b=y\cup Y$, $c=z\cup Z$, and such that these
six arcs pairwise intersect at most at their ends. The union
$a\cup b\cup c$ may be viewed as a graph embedded in $\Sigma$,
and these six arcs are the edges of this embedded graph.
We let $F$ be the set of nonseparating curves, among the~8
curves $X\cup Y\cup Z$, $X\cup Y\cup z$,
$X\cup y\cup Z$ \textsl{etc.}, (there is one choice of
upper/lower case for each letter).
In the course of the proof of Lemma~\ref{lem:CestLeCollier}, we
will see that $F$ is nonempty, and satisfies the Lemma.
\begin{proof}[Proof of Lemma~\ref{lem:CestLeCollier}]
  Let $\{a,b,c\}$ be a $3$-clique of type necklace.
  Let $F$ be the set of nonseparating curves, as above, among all the~8
  curves $x\cup y\cup z$, $X\cup y\cup z$, $X\cup Y\cup z$, etc.
  Let $d$ be such that $\{a,b,c,d\}$ is a $4$-clique.
  Up to permuting the curves $a$, $b$ and $c$, we may suppose that
  $(|a\cap d|, |b\cap d|, |c\cap d|)$ equals
  $(1,1,1)$, or $(1,1,0)$, or $(1,0,0)$ or $(0,0,0)$~; our proof proceeds
  case by case.
  
  The easiest case is $(1, 0, 0)$. In this case,
  up to exchanging the arcs $X$ and $x$, we may suppose that
  $d$ intersects $a$ at an interior point of $X$, and is disjoint from
  all the other arcs. Consider $f=X\cup y\cup z$.
  This curve intersects
  $d$ at a unique point, transversely. It follows that $f$ is nonseparating.
  Hence $f\in F$ and $f$ satisfies the conclusion of the lemma.
  
  Now let us deal simultaneously with the cases $(1,1,1)$ and $(1,1,0)$.
  Suppose first that the intersections of
  $d$ with $a\cup b\cup c$ do not occur at the intersection points
  $a\cap b$, $a\cap c$ or $b\cap c$. Up to exchanging $x$ with $X$,
  $y$ with $Y$ and $z$ with $Z$, we may suppose that the intersections
  occur in the interior of the arcs $X$, $Y$, and $Z$ in the case $(1,1,1)$,
  and in the interior of the arcs $X$ and $Y$ in the case $(1,1,0)$.
  Now the curve
  $f=X\cup y\cup z$, for instance, satisfies the conclusion of the lemma.
  
  Now suppose that $d$ contains one of the points $a\cap b$, $a\cap c$ or $b\cap c$.
  In case $(1,1,1)$ we may suppose, up to permuting $a$, $b$ and $c$, that $d$
  contains the point $a\cap b$, and in the case $(1,1,0)$, this is automatic,
  as $d$ is disjoint from $c$.
  Now in any case, $d$ cannot
  contain $a\cap c$ nor $b\cap c$, because it intersects $a$ and $b$ only once.
  Hence, up to exchanging $z$ with $Z$, we may suppose $d\cap z=\emptyset$.
  In the neighborhood of the point $a\cap b$, up to homeomorphism, the configuration of
  our curves is as depicted in
  Figure~\ref{fig:dTransverse}, because all the intersections are supposed
  to be transverse. Then, up to exchanging $X$ with $x$ or $Y$ with $y$,
  we can suppose that the arc $X\cup Y$ has a transverse intersection with
  $d$, and then the arc $f = X\cup Y \cup z$ satisfies the conclusion of the lemma.
  \begin{figure}[htb]
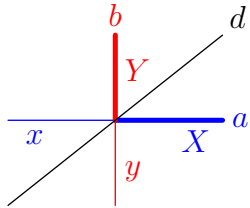

  \begin{asy}
  import geometry;
  
  path a1 = (-40, 0)--(0, 0), a2 = (0,0)--(40,0);
  path b1 = (0, -32)--(0, 0), b2 = (0,0)--(0,32);
  path d = (-40, -32)--(40, 32);
  draw (a1, blue); draw(a2, 1.9pt+blue);
  draw(b1, red); draw(b2, 1.9pt+red);
  draw(d);
  label("$a$", (40, 0), E, blue);
  label("$b$", (0, 32), N, red);
  label("$d$", (40, 32), NE);
  label("$x$", (-30,0), S, blue); label("$X$", (30,0),S, blue);
  label("$y$", (0,-19), E, red); label("$Y$", (0,19),E, red);
  \end{asy}
  \caption{A suitable choice of $X$ and $Y$}
  \label{fig:dTransverse}
  \end{figure}
  
  We are left with the case $(0,0,0)$. In this case, any curve in $F$ will
  satisfy the conclusion of the lemma, and hence all we have to do is to prove that $F$ is
  nonempty. If $X\cup Y\cup Z$ and $X\cup Y\cup z$ were both separating,
  then so would be $c=z\cup Z$, by Fact~\ref{fact:TroisArcsSep}.
  Hence, among these two curves, at least one is nonseparating, and $F$ is
  nonempty (in fact, it contains at least 4 elements).
\end{proof}

\subsection{One or two-sided curves, and extra bouquets}\label{ssec:xB}

In this last paragraph of this section, we will see how to recognize, from
the graph strucutre of $\TransFin(\Sigma)$, some additional configurations.
We insist that the work in this paragraph is useful only in the case when
$\Sigma$ is non orientable; it is needed in order to make our proof of
Theorem~\ref{thm:AutC1} work in that case (see Remark~\ref{rmk:Klein} below).

We start with a simple characterization of one-sided and two-sided curves.
\begin{itemize}
\item[$\bullet$] $\mathrm{Two}(a)$ the property that for all $b$ such that $T(a,b)$
  holds, there exists $c$ such that $T(b,c)$ and $D(a,c)$ both hold,
\item[$\bullet$] $\mathrm{One}(a)$ the negation of $\mathrm{Two}(a)$:
  there exists $b$ such that $T(a,b)$ and such that there does not
  exist $c$ satisfying $T(b,c)$ and $D(a,c)$.
\end{itemize}

\begin{observation}\label{obs:1-sided}
  Let $a$ be a vertex of $\TransFin(\Sigma)$. Then the curve $a$ is
  one-sided, if and only if $\mathrm{One}(a)$ holds.
\end{observation}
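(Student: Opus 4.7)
The plan is to prove both implications of the observation by constructing the witnessing curves as push-offs along $a$, appealing to the pictures in Figure~\ref{fig:FigureFacts}; the converse direction also requires a short $\Z/2$-homology computation to produce the required obstruction.

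First I would dispatch the easy direction: if $a$ is two-sided, then $\Two(a)$ holds. Given any $b$ with $T(a,b)$, I take $c$ to be a push-off of $a$ inside an annular neighborhood of $a$ (center of Figure~\ref{fig:FigureFacts}). This $c$ is isotopic to $a$, hence a nonseparating simple closed curve, is disjoint from $a$ (giving $D(a,c)$), and, because $b$ crosses $a$ once transversely inside the annulus, it also crosses the parallel copy $c$ exactly once transversely, giving $T(b,c)$.

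For the converse I would argue the contrapositive: if $a$ is one-sided, then $\One(a)$ holds. I take $b$ to be the one-sided push-off of $a$ inside a Möbius strip neighborhood $M$ of $a$ (right of Figure~\ref{fig:FigureFacts}). Then $b$ is a nonseparating simple closed curve meeting $a$ once transversely, so $T(a,b)$ holds. The crucial observation is that $M$ deformation retracts onto $a$, so $b$ is freely homotopic to $a$ in $\Sigma$, and therefore $[b]=[a]$ in $H_1(\Sigma;\Z/2\Z)$. Consequently, for any vertex $c$ with $D(a,c)$, the $\Z/2$-intersection number $[c]\cdot[b]=[c]\cdot[a]=0$, so any transverse intersection of $c$ with $b$ has even cardinality, ruling out $T(b,c)$. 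Hence this particular $b$ witnesses $\One(a)$.

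The main obstacle is the choice of $b$ in the converse direction: a generic $b$ transverse to a one-sided $a$ need not obstruct the existence of the auxiliary curve $c$, and in fact using the surgery techniques of the preceding lemmas one can typically complete $(a,b)$ to the configuration demanded by $\Two(a)$. The argument only works because the specific one-sided push-off $b$ is homologous to $a$ modulo $2$, and this homological coincidence is precisely what the one-sidedness of $a$ makes available. Note that working with $\Z/2$-coefficients is essential, since we want the argument to cover the non-orientable case as well.
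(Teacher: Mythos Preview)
Your proof is correct. The forward direction (two-sided $\Rightarrow$ $\Two(a)$) is identical to the paper's. For the converse, however, you take a genuinely different route: the paper observes that when $a$ is one-sided and $b$ is its push-off, the union $a\cup b$ bounds a disk in $\Sigma$, and then argues directly that any $c$ disjoint from $a$ and meeting $b$ once transversely would have to enter this disk and be unable to leave without a second intersection. Your argument replaces this topological trapping by the $\Z/2$-intersection form: since $b$ is homotopic to $a$ inside the M\"obius neighborhood, $[b]=[a]$ in $H_1(\Sigma;\Z/2)$, so $[c]\cdot[b]=[c]\cdot[a]=0$ for any $c$ disjoint from $a$, contradicting $T(b,c)$. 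Both approaches exploit the same special choice of $b$; yours is slightly more conceptual and makes transparent \emph{why} only this particular $b$ works, while the paper's argument is entirely elementary and avoids invoking homology. One small point worth making explicit in your first direction: the push-off $c$ must be taken close enough to $a$ so that $b$, which meets $a$ only at a single transverse point, meets $c$ only once as well; this uses that $b\setminus\{a\cap b\}$ is compact and disjoint from $a$.
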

\begin{proof}
  If $a$ is two-sided, and $b$ satisfying $T(a,b)$, by pushing $a$ 
  aside we find another curve $c$ as in the definition of $\mathrm{Two}(a)$. 
  This proves the revers implication.
  
  If $a$ is one-sided, let $b$ be a curve obtained by pushing $a$ aside.
  We have $T(a,b)$, and $a$ and $b$ bound a disk. Any curve $c$
  disjoint from $a$, and with $T(b,c)$, has to enter this disk: but then,
  it has to get out, which is impossible without touching $a$ and
  without intersecting $b$ another time.
\end{proof}

Our next objective is to characterize when two one-sided curves $a$ and $b$
meet exactly once, non-transversely. We will do this in several steps.
\begin{lemma}\label{lem:acapbnonconnexe}
  Let $a$, $b$ be one-sided simple curves of $\Sigma$. Suppose the intersection
  $a\cap b$ is not connected.
  Then there exists a vertex $c$ of $\TransFin(\Sigma)$,
  distinct from $a$ and $b$,
  such that for every neighbor $d$ of both $a$ and $b$ in this graph,
  and such that $D(a,d)$ or $D(b,d)$ (or both), the vertices $c$ and $d$ are
  neighbors in this fine graph.
\end{lemma}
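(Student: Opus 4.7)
Our plan is to construct $c$ explicitly as the union of a sub-arc of $a$ and a sub-arc of $b$, and then verify the required intersection property by a direct calculation.

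We first exploit the disconnectedness of $a\cap b$ to select two components $K_1, K_2$ of $a\cap b$ that are adjacent on the circle $a$, meaning that one of the two sub-arcs of $a$ joining them contains no other component of $a\cap b$. Each $K_i$ is a point or a sub-arc of $a$ (never all of $a$, since otherwise $a=b$); we pick $p_i$ to be the endpoint of $K_i$ facing $K_{3-i}$ along the chosen side. Then the sub-arc $\alpha$ of $a$ from $p_1$ to $p_2$ on this side has interior disjoint from $b$, so $\alpha\cap b=\{p_1,p_2\}$. Letting $\beta, \beta'$ be the two arcs of $b$ from $p_1$ to $p_2$, both $c_1:=\alpha\cup\beta$ and $c_2:=\alpha\cup\beta'$ are topological simple closed curves (with two corners at $p_1, p_2$), since $\alpha\cap\beta=\alpha\cap\beta'=\{p_1,p_2\}$.

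Next we apply Fact~\ref{fact:TroisArcsSep} to the three arcs $\alpha,\beta,\beta'$ from $p_1$ to $p_2$, whose pairwise intersections are exactly $\{p_1, p_2\}$. Since $b=\beta\cup\beta'$ is nonseparating, at least one of $c_1, c_2$ must be nonseparating too; we set $c$ to be such a curve. Note that $c\neq a, b$: for instance, $c=b$ would force $\alpha\subset b$ and hence $\alpha\subset a\cap b$, contradicting the fact that $\alpha$ joins two \emph{distinct} components of $a\cap b$.

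It remains to verify the intersection property. Let $d$ be a neighbor of both $a$ and $b$ in $\TransFin(\Sigma)$ with, say, $d\cap a=\emptyset$ (the other case is symmetric, reversing the roles of $\alpha$ and $\beta$). Then $d\cap\alpha=\emptyset$, and thus $d\cap c=d\cap\beta\subset d\cap b$, which has cardinality at most~$1$ since $\{b,d\}$ is an edge. Moreover, the corners $p_1, p_2$ lie on $a$ and are avoided by $d$, so any intersection occurs in the interior of $\beta$, where $c$ locally coincides with $b$; the topological transversality of $d\cap b$ then transfers to $d\cap c$. This makes $\{c,d\}$ an edge of $\TransFin(\Sigma)$, as required. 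The only delicate point in the whole argument is choosing $p_i$ correctly when $K_i$ is a sub-arc rather than a single point, so that $\alpha$ does not re-enter $K_i$; this is easily handled, and the one-sidedness hypothesis on $a$ and $b$ is actually not used in the construction.
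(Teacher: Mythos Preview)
Your proof is correct and follows essentially the same approach as the paper: construct $c$ as the union of a subarc of one curve (with interior disjoint from the other) and a subarc of the other curve, invoke Fact~\ref{fact:TroisArcsSep} to guarantee nonseparation, and then verify the edge condition by noting that any intersection of $d$ with $c$ must occur away from the corners and hence inherits transversality from $d\cap a$ or $d\cap b$. The only cosmetic difference is that the paper takes the ``short'' arc $x$ inside $b$ (with interior disjoint from $a$) while you take it inside $a$; this is an immaterial swap of roles, and your final remark that one-sidedness is not actually used is correct and applies equally to the paper's argument.
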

\begin{proof}
  Let $x$ be a subarc of $b$, whose endpoints lie in $a$, and disjoint from $a$
  otherwise. Since $a\cap b$ is disconnected, such an arc exists, and has two
  distinct end points, $p$ and $q$. Let $x'$ and $x''$ be the two subarcs of $a$
  whose ends are $p$ and $q$.
  From Fact~\ref{fact:TroisArcsSep}, we know that $x\cup x'$ or $x\cup x''$
  (or both) is a nonseparating curve; denote it by $c$. By construction,
  we have $c\neq a$ and $c\neq b$.
  
  Now let $d$ be a curve satisfying the hypothesis of the lemma. If $d$ is
  disjoint from $a$ and $b$, then it is disjoint from $c$; otherwise $d$
  intersects exactly one of $a$, $b$, far away from the other. So the intersection
  between $d$ and $c$, if any, is still transverse, and $d$ is a neighbor of
  $c$ in~$\TransFin(\Sigma)$.
\end{proof}
This contrasts with the situation we want to characterize, as we see now.
\begin{lemma}\label{lem:I(a,b)-lemme1}
  Let $a$ and $b$ be two one-sided curves, and suppose that $a\cap b$
  consists in one, inessential intersection point. Then, for every
  nonseparating curve $c\not\in\{a,b\}$, there exists $d$ such
  that $T(a,d)$ and $D(b,d)$
  hold but such that the intersection $c\cap d$ is infinite.
\end{lemma}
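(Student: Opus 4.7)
The plan is to construct $d$ explicitly, by first producing a simple closed curve $d_{0}$ with the correct intersection pattern with $a$ and $b$, and then modifying it near an arc of $c$ to force infinitely many intersections with $c$. The union $a\cup b$ is topologically a wedge of two circles at $p$, whose only simple closed subcurves are $a$ and $b$ themselves; since $c\notin\{a,b\}$, this forces $c\not\subset a\cup b$. I pick a point $q\in c\setminus(a\cup b)$ and a small open disk $V\ni q$ with $V\cap(a\cup b)=\emptyset$ such that $c\cap V$ is an arc $\gamma$ through $q$.

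To produce a base curve $d_{0}$ with $T(a,d_{0})$ and $D(b,d_{0})$, I exploit the one-sidedness of $a$: a regular neighborhood of $a$ is a M\"obius band $N$, which can be shrunk (using compactness of $b$ together with the fact that $b\setminus\{p\}$ is disjoint from $a$) so that $N\cap b$ is a single short arc through $p$. The intersection at $p$ being \emph{inessential} means that, inside $N$, this arc lies on one local side of $a$ near $p$. I then choose $d_{0}\subset N$ to be a simple closed curve meeting $a$ transversely at a single point $x\neq p$ and staying on the opposite local side of $a$ from $b$ near $p$; since $d_{0}\subset N$ misses both $N\cap b$ and $b\setminus N$, this gives $T(a,d_{0})$ and $D(b,d_{0})$.

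The curve $d$ is then obtained by grafting a tongue onto $d_{0}$ that reaches into $V$. Concretely, I find a simple arc $\alpha\subset\Sigma\setminus(a\cup b)$ from some point $r\in d_{0}\setminus a$ to some point $r'\in V$, cut $d_{0}$ near $r$, traverse $\alpha$ into $V$, insert a finger along $\gamma$ that crosses $c$ infinitely many times, return along $\alpha$, and reconnect. All these modifications occur in $\Sigma\setminus(a\cup b)$, so the resulting simple closed curve $d$ still satisfies $T(a,d)$ and $D(b,d)$, while $|c\cap d|=\infty$.

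The main obstacle is producing the tongue arc $\alpha$, i.e., ensuring that $V$ and some point of $d_{0}\setminus a$ lie in the same component of $\Sigma\setminus(a\cup b)$. The key observation is that every component $C$ of $\Sigma\setminus(a\cup b)$ has closure touching $a$: otherwise $\bar C$ would be a subsurface with boundary contained in $b$, forcing $b$ to be separating and contradicting its one-sidedness. Hence $C$ meets $N\setminus a$, and given the component containing $V$ I can re-choose $d_{0}$ inside $N$ (using the flexibility of which local side of $a$ it lies on at various points) so that $d_{0}\setminus a$ meets this component; then $\alpha$ can be drawn inside $\bar C$.
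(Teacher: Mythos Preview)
Your proof is correct and follows essentially the same strategy as the paper's: build a push-off $d_0$ of the one-sided curve $a$ that avoids $b$ (possible because the intersection is inessential), then perform a surgery away from $a\cup b$ to force infinitely many intersections with $c$.

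The difference lies in how the connectivity issue is handled. You work around it: you show every component $C$ of $\Sigma\setminus(a\cup b)$ must have closure meeting $a$ (else its frontier lies in $b$, contradicting that $b$ is nonseparating), and then argue $d_0$ can be re-chosen so that $d_0\setminus a$ meets $C$. This works, though the ``re-choose $d_0$'' step deserves one more sentence: your argument actually shows $\bar C$ meets $a\setminus\{p\}$ (since $p\in b$, the same contradiction applies if $\bar C\cap a=\{p\}$), and near such a point $z\neq p$ the push-off can be placed on the $C$-side of $a$ while still avoiding $b\cap N$, because the constraint from $b$ is localized at $p$.

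The paper's proof is shorter: it observes directly that $\Sigma\setminus(a\cup b)$ is \emph{connected}. One sees this by tracing around a regular neighborhood of $a\cup b$; since both curves are one-sided and the intersection is inessential, the neighborhood is a once-punctured Klein bottle with a single boundary circle, so its complement in the wedge $a\cup b$ is connected. With connectivity in hand, there is no need to worry about which component contains $V$ or to re-choose $d_0$: any push-off of $a$ disjoint from $b$ already lies in the unique component, and the tongue can be drawn immediately. Your component-by-component argument is a valid substitute, but proving connectedness outright removes the entire last paragraph.
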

\begin{proof}
  We first observe that $\Sigma'=\Sigma\smallsetminus(a\cup b)$ is connected.
  This is seen by following the curves $a$ and $b$ in both directions: the
  union $a\cup b$ does not disconnect its small neighborhoods.
  Let $c$ be a curve as above. Then, we may consider a first curve $d_0$,
  obtained by pushing $a$ aside, in such a way that $d_0$ is disjoint from
  $b$ (this is possible since the intersection $a\cap b$ is inessential).
  Since $c\not\in\{a,b\}$, the curve $c$ intersects $\Sigma'$.
  Since $d_0$ meets every component of $\Sigma'$ (there is only one), we may
  deform it into a curve $d$ which intersects $c$ infinitely many times,
  exactly as in the proof of Lemma~\ref{lem:CestLeBouquet}.
\end{proof}
After these two lemmas, we have a simple sentence in terms of the graph
$\TransFin(\Sigma)$, which holds when $a\cap b$ is a single inessential
intersection point, and which guarantees that $a\cap b$ is connected.
In order to upgrade this into a characterization of the first situation,
we need to be able to exclude as well the cases when $a\cap b$ is a
non degenerate arc. These cases fall into two subcases: the intersection
arc $a\cap b$ can be essential or inessential, exactly as an intersection
point. One way to formalize this, is to say that
the intersection $a\cap b$ is essential if $a$ cuts
a regular neighborhood of $a\cap b$ into two regions both containing a
subarc of $b$, and inessential otherwise.
\begin{lemma}\label{lem:ArcEssentiel}
  Let $a$ and $b$ be one-sided curves.
  Suppose that $a\cap b$ is a non degenerate arc, and suppose this intersection
  is essential. Then there exist curves $\alpha,\alpha',\beta,\beta'$ obtained
  by pushing $a$ aside, such that $B(a,\alpha,\beta)$, $B(b,\alpha,\beta)$,
  $B(a,\alpha',\beta')$, $B(b,\alpha',\beta')$, and $N(a,\alpha,\beta')$.
\end{lemma}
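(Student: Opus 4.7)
The plan is to construct the four curves $\alpha,\beta,\alpha',\beta'$ as pushed-aside copies of $a$ inside a Möbius band neighborhood $M_a$ of $a$. The essentiality hypothesis plays a crucial role: taking $M_a$ thin enough, $b\cap M_a$ is a single connected neighborhood of $A=a\cap b$ in $b$ that leaves $a$ on opposite sides past the two endpoints of $A$.

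First, I parametrize $M_a \cong ([0,2\pi]\times[-1,1])/\!\sim$ with $(0,s)\sim(2\pi,-s)$ and $a=\{s=0\}$, writing $A$ as $\{(\theta,0):\theta\in[p,q]\}$, and pick two distinct interior points $x=(\theta_x,0)$, $x'=(\theta_{x'},0)$ of $A$. Take $\alpha,\beta$ to be the graphs of $\pm\epsilon\sin((\theta-\theta_x)/2)$ and $\alpha',\beta'$ the graphs of $\pm\epsilon\sin((\theta-\theta_{x'})/2)$, for $\epsilon>0$ small. Each of these functions satisfies the antiperiodic condition $f(0)=-f(2\pi)$ dictated by the Möbius identification and has exactly one zero in $[0,2\pi)$, so each of the four curves is a simple closed curve in $M_a$ crossing $a$ transversely at its prescribed point; being freely homotopic to $a$ in $M_a$, they share the mod-$2$ homology class of $a$ and are thus nonseparating in $\Sigma$, hence vertices of $\TransFin(\Sigma)$.

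Next I verify the four bouquet conditions. In each pair $(\alpha,\beta)$ and $(\alpha',\beta')$, the two graphs differ only by a sign, so they meet each other only at their common zero---$x$ and $x'$ respectively---with opposite slopes, yielding a single transverse intersection. For the intersections with $b$: by choosing $\epsilon$ smaller than the distance separating $a$ from the off-$A$ part of $b\cap M_a$, intersections of pushed-aside curves with $b$ are confined to the $A$-part of $b$. The sign pattern $\sin((\theta-\theta_x)/2)<0$ for $\theta<\theta_x$ and $>0$ for $\theta>\theta_x$, combined with essentiality (which puts $b$ on the opposite side of $a$ past each endpoint of $A$), ensures the pushed-aside curves stay on the side of $a$ opposite to $b$ outside $A$, so that $\alpha\cap b=\{x\}$, $\beta\cap b=\{x\}$, and similarly for $\alpha',\beta'$ at $x'$. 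Since $a$ and $b$ coincide near $x$ and near $x'$, transversality to $a$ at those points equals transversality to $b$, giving all four bouquets $B(a,\alpha,\beta)$, $B(b,\alpha,\beta)$, $B(a,\alpha',\beta')$, $B(b,\alpha',\beta')$.

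Finally, to obtain $N(a,\alpha,\beta')$, the sum-to-product identity reduces the equation $\sin((\theta-\theta_x)/2)=-\sin((\theta-\theta_{x'})/2)$ to $\sin\bigl((\theta-(\theta_x+\theta_{x'})/2)/2\bigr)=0$ (using $\theta_x\neq\theta_{x'}$), with unique solution $\theta=(\theta_x+\theta_{x'})/2$ in $[0,2\pi)$. At this value the common $s$-coordinate equals $\epsilon\sin((\theta_{x'}-\theta_x)/4)\ne 0$, so $\alpha\cap\beta'=\{y\}$ transversely, with $y\notin a$ and in particular $y\ne x,x'$. The three pairwise intersections of $\{a,\alpha,\beta'\}$ thus occur at three distinct points, making this $3$-clique a necklace, and Proposition~\ref{prop:TypesPrecis} yields $N(a,\alpha,\beta')$. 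The main technical obstacle is the essentiality-based argument ensuring the pushed-aside curves meet $b$ only at the prescribed crossings; in the inessential case $b$ would lie on the same side of $a$ past both endpoints of $A$, and the pushed-aside curves would necessarily pick up extra intersections with $b$.
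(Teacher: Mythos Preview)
Your construction has a genuine gap in the verification that $\beta\cap b=\{x\}$ (and similarly for the primed pair). The curves $\alpha$ and $\beta$, being graphs of $+\epsilon\sin((\theta-\theta_x)/2)$ and $-\epsilon\sin((\theta-\theta_x)/2)$, have \emph{opposite} sign patterns: near $\theta=p$ one of them lies at $s<0$ and the other at $s>0$. Essentiality tells you the two tails of $b$ leave $A$ on opposite sides of $a$, so exactly \emph{one} of $\alpha,\beta$ is on the side opposite the tail at $p$ --- the other one is on the same side. Concretely, if the tail at $p$ enters $\{s>0\}$, then $\beta$ is also at $s>0$ for $\theta$ near $p$; since that tail runs from $s=0$ all the way to $\partial M_a$ while $\beta$ sits at a fixed small positive height, the tail crosses $\beta$ (an odd number of times). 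The same happens at the other endpoint $q$. Hence $|\beta\cap b|\geq 3$, so $T(b,\beta)$ fails and $B(b,\alpha,\beta)$ is not satisfied.

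Your auxiliary claim --- that one can take ``$\epsilon$ smaller than the distance separating $a$ from the off-$A$ part of $b\cap M_a$'' --- is also not usable as stated: the tails of $b$ touch $a$ at the endpoints $(p,0)$ and $(q,0)$, so that distance is zero. The correct fix, visible in the paper's Figure~\ref{fig:PropI-1}, is not to take $\alpha$ and $\beta$ symmetric about $a$, but to take them \emph{both} with the good sign pattern (both below $a$ for $\theta<\theta_x$ and above for $\theta>\theta_x$), only with different slopes at $x$ so that they cross each other transversely there --- for instance, graphs of $\epsilon_1\sin((\theta-\theta_x)/2)$ and $\epsilon_2\sin((\theta-\theta_x)/2)$ with $0<\epsilon_1<\epsilon_2$. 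Then both avoid the tails of $b$ for the reason you gave for $\alpha$, and the bouquet conditions go through. You then need to re-verify the necklace condition $N(a,\alpha,\beta')$ with this modified choice, which requires a bit more care than the sum-to-product identity you used.
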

\begin{proof}[Proof of Lemma~\ref{lem:ArcEssentiel}]
  The curves $\alpha$, $\beta$, $\alpha'$ and $\beta'$ may be taken in a
  neighborhood of $a\cup b$, as pictured in Figure~\ref{fig:PropI-1}.
  \begin{figure}[htb]
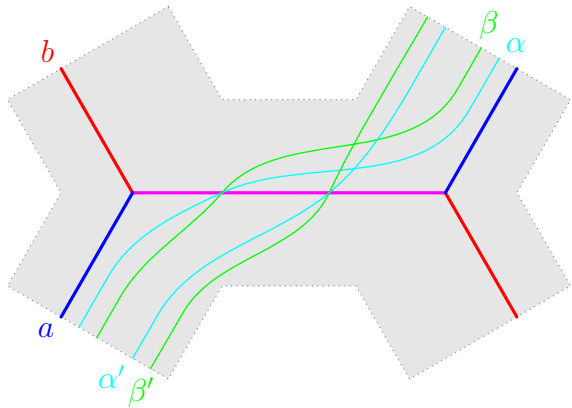

  \begin{asy}
  import geometry;
  
  point A = (0,70), B = (-60,35), C = (-60,-35), D = (0, -70),
    E = (90, -70), F = (150, -35), G = (150, 35), H = (90, 70);
  point O1 = (-40/3,0), O2 = (90+40/3,0);
  point U = (-40,0), V = (130, 0);
  path contour = A--B--(-40,0)--C--D--(20,-35)--(70,-35)--
    E--F--(130,0)--G--H--(70, 35)--(20,35)--cycle;

  path alpha = ((5*C+4*D)/9)..((5*U+4*D)/9){dir(60)}..(20,0){dir(25)}..
    ((5*V+4*H)/9){dir(60)}..((5*G+4*H)/9);
  path beta = ((4*C+5*D)/9)..((4*U+5*D)/9){dir(60)}..(20,0){dir(50)}..
    ((4*V+5*H)/9){dir(60)}..((4*G+5*H)/9);

  path alphaprim = ((2*C+7*D)/9)..((2*U+7*D)/9+10*dir(60)){dir(60)}..
    (60,0){dir(40)}..((2*V+7*H)/9){dir(60)}..((2*G+7*H)/9);
  path betaprim = ((1*C+8*D)/9)..((1*U+8*D)/9+20*dir(60)){dir(60)}..
    (60,0){dir(65)}..((1*V+8*H)/9){dir(60)}..((1*G+8*H)/9);

  fill (contour, lightgray);
  draw (contour, gray+dotted);

  draw (O1--O2, magenta+1.2pt);
  draw ((A+2*B)/3--O1, red+1.2pt); label("$b$", (A+2*B)/3, NW, red);
  draw ((2*C+D)/3--O1, blue+1.2pt); label("$a$", (2*C+D)/3, SW, blue);
  draw ((E+2*F)/3--O2, red+1.2pt); draw ((2*G+H)/3--O2, blue+1.2pt);

  draw (alpha, cyan);  label("$\alpha$", (5*G+4*H)/9, NE, cyan);
  draw(beta, green);  label("$\beta$", (4*G+5*H)/9, NNE, green);
  draw(alphaprim, cyan);  label("$\alpha'$", (2*C+7*D)/9, SW, cyan);
  draw(betaprim, green);  label("$\beta'$", (1*C+8*D)/9, SSW, green);
  \end{asy}
  \caption{The curves $\alpha,\beta,\alpha',\beta'$}
  \label{fig:PropI-1}
  \end{figure}
\end{proof}
Finally, we deal with inessential arcs.
\begin{lemma}\label{lem:IntNonEssentielle}
  Let $a$ and $b$ be one-sided curves, such that $a\cap b$ is
  an inessential arc or intersection point. Let $c$ be such that $T(a,c)$.
  Then there exists $d$ such that $B(a,c,d)$ and $D(b,d)$, if and only
  if the intersection point $a\cap c$ does not belong to~$b$.
\end{lemma}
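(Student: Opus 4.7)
The forward implication is immediate: if $d$ satisfies $B(a,c,d)$, then $a$, $c$, $d$ form a bouquet, so they share a common intersection point, which must coincide with the unique point of $a\cap c$, namely $p$; so $p \in d$. Combined with $D(b,d)$, which gives $d\cap b = \emptyset$, this yields $p \notin b$.

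For the converse, assume $p \notin b$; I construct $d$ as a suitable push-off $a^+$ of the one-sided curve $a$ inside a Möbius-strip neighborhood, in the sense recalled in Section~\ref{sec:Lemmes1-2-3}. Such a push-off is a simple closed curve crossing $a$ once transversely at a point that can be prescribed anywhere along $a$; I prescribe $p$. The resulting $a^+$ is automatically nonseparating by Fact~\ref{fact:NonSep} applied to the pair $(a,a^+)$. I then impose two further conditions on the shape of $a^+$: (i) its tangent at $p$ differs from that of $c$, a generic choice within the one-parameter family of local shapes of the crossing; and (ii) $a^+$ sits on the local side of $a$ opposite to $b$ near $a\cap b$. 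Condition (ii) is where the hypothesis that $a\cap b$ is a single inessential arc or point is used: connectedness guarantees that $b$ lies on one local side of $a$ throughout $a\cap b$, and the push-off carries a binary freedom --- which of the two arcs of $a\setminus\{p\}$ receives which local side of $a$ --- that is exactly what is needed to place $a^+$ on the opposite side from $b$.

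Taking the Möbius neighborhood of $a$ thin enough, condition (i) ensures $a^+\cap c = \{p\}$ transversely, since $c$ leaves the thin tube of $a$ immediately away from $p$; and outside a small neighborhood of $a\cap b$, the tube is disjoint from the (compact) remainder of $b$, while inside it $a^+$ lies on the side of $a$ avoided by $b$, so $a^+ \cap b = \emptyset$. Setting $d=a^+$, the common intersection $a\cap c\cap d=\{p\}$ with all pairwise intersections transverse gives $B(a,c,d)$, and $d\cap b = \emptyset$ gives $D(b,d)$. The main delicate point is the simultaneous coordination of the three free parameters of the push-off --- crossing location, tangent direction there, and local side along each arc --- in a single consistent curve; the one-sidedness of $a$ supplies the freedom to prescribe the crossing location and tangent, while the connectedness of the inessential intersection $a\cap b$ reduces $b$-avoidance to the binary side-selection, which is exactly why all three constraints can be met together.
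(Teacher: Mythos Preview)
Your proof is correct and follows the same approach as the paper: for the converse direction you push the one-sided curve $a$ aside so that the resulting curve $d$ crosses $a$ precisely at $p=a\cap c$ and lies on the local side of $a$ opposite to $b$ near the inessential intersection $a\cap b$; this is exactly the construction illustrated in Figure~\ref{fig:I-xB} (left). One small remark: your phrasing in terms of ``tangent at $p$'' is smooth-curve language while this section is in the topological setting, but this is harmless here since the change-of-coordinates principle lets you straighten the finite configuration near $p$, and the point is just that the push-off meets $c$ in a single topologically transverse point.
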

\begin{proof}
  If $a\cap c$ belongs to $b$, then obviously, any curve $d$ satisfying
  $B(a,b,d)$ will meet $b$. Otherwise, we may push $a$ aside, in order to
  obtain a curve $d$, as in Figure~\ref{fig:I-xB}, left.
  %
\end{proof}
Putting all together, this yields the following characterization of inessential
intersection points between one-sided curves.
\begin{corollary}\label{cor:I(a,b)}
  Let $a$ and $b$ be one-sided curves. Then, $a\cap b$ consists of one,
  inessential intersection point, if and only if the following conditions are
  satisfied:
  \begin{enumerate}
  \item $a$ and $b$ are not neighbors in $\TransFin(\Sigma)$,
  \item for every $c\not\in\{a,b\}$, there exists $d$ such that $d$ is a
    neighbor of $a$ and $b$ in $\TransFin(\Sigma)$, and $D(a,d)$ or $D(b,d)$
    (or both), but $d$ is not a neighbor of $c$ in that graph,
  \item there do not exist $\alpha,\beta,\alpha',\beta'$ such that
    $B(a,\alpha,\beta)$, $B(b,\alpha,\beta)$, $B(a,\alpha',\beta')$,
    $B(b,\alpha',\beta')$ and $N(a,\alpha,\beta')$ all hold,
  \item there do not exist $c_1,c_2$ with $D(c_1,c_2)$ and with the following
    property:
    for $i=1,2$ we have: $T(a,c_i)$ and for all $d$, $B(a,c_i,d)$ and $D(b,d)$
    do not both hold.
  \end{enumerate}
\end{corollary}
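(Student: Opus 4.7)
The approach is to verify the two implications of the biconditional separately, using the four preceding lemmas as translation devices between graph-theoretic and geometric statements.

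\emph{Necessity.} Assume $a\cap b=\{p\}$ is a single inessential intersection point. Condition (1) is immediate: since the intersection is not transverse, $\{a,b\}$ is not an edge of $\TransFin(\Sigma)$. Condition (2) is essentially Lemma~\ref{lem:I(a,b)-lemme1}, once we observe that the property ``$d$ is not a neighbor of $c$'' translates into $c\cap d$ being infinite (hence, in particular, the $d$ produced by the lemma satisfies $T(a,d)$, $D(b,d)$, and fails to be a neighbor of any prescribed $c$). For condition (3), suppose $\alpha,\beta,\alpha',\beta'$ satisfy the listed properties. By Corollary~\ref{coro:TypesPrecis}, the bouquet conditions $B(a,\alpha,\beta)$ and $B(b,\alpha,\beta)$ together force $\alpha\cap\beta$ to be a single point lying in $a\cap b$, hence equal to $p$; similarly $\alpha'\cap\beta'=\{p\}$. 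But $N(a,\alpha,\beta')$ requires the three pairwise intersections $a\cap\alpha$, $a\cap\beta'$, and $\alpha\cap\beta'$ to be three distinct points, and in particular the two points $a\cap\alpha=\{p\}$ and $a\cap\beta'=\{p\}$ should be distinct, a contradiction. For condition (4), Lemma~\ref{lem:IntNonEssentielle} converts the inner ``for all $d$'' clause into the condition $a\cap c_i\in b$; since $T(a,c_i)$ forces $a\cap c_i$ to be a single point and $a\cap b=\{p\}$, we find $a\cap c_1=a\cap c_2=\{p\}$, so $p\in c_1\cap c_2$, contradicting $D(c_1,c_2)$.

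\emph{Sufficiency.} Assume (1)-(4). By (1), $a\cap b$ is nonempty and is not a single transverse point. The contrapositive of Lemma~\ref{lem:acapbnonconnexe}, applied to condition (2), forces $a\cap b$ to be connected: otherwise the curve $c$ provided by the lemma would witness the failure of (2). A connected closed subset of a simple closed curve is either a point or a subarc, so we are reduced to three cases: $a\cap b$ is a single inessential point, a non-degenerate inessential arc, or a non-degenerate essential arc. Lemma~\ref{lem:ArcEssentiel} rules out the essential arc case via the negation of (3). To rule out the non-degenerate inessential arc case, pick two distinct interior points $p_1,p_2$ of the arc $a\cap b$ and construct, via the change-of-coordinates principle and standard pushing-aside surgery along $a$, two disjoint nonseparating simple closed curves $c_1,c_2$ with $c_i\cap a=\{p_i\}$ transversely. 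Since $p_i\in b$, Lemma~\ref{lem:IntNonEssentielle} applies to each $c_i$ and yields that no $d$ simultaneously satisfies $B(a,c_i,d)$ and $D(b,d)$; together with $T(a,c_i)$ and $D(c_1,c_2)$, this contradicts (4). Only the single inessential point case survives.

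\emph{Main obstacle.} The most delicate step is the necessity of condition (3), which requires carefully unpacking the combinatorial bouquet and necklace hypotheses into a geometric statement forcing two \emph{distinct} points in $a\cap b$; the combinatorial rigidity of necklaces versus bouquets (each having exactly the same number of pairwise intersections) is what makes this work. A secondary technical point is the construction of globally disjoint $c_1,c_2$ in the inessential arc case, which is routine but relies on the nonsphericity of $\Sigma$ to allow the two locally pushed-aside arcs near $p_1$ and $p_2$ to be closed up disjointly in the complement of a Möbius neighborhood of $a$.
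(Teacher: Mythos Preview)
Your proof is correct and follows the same route as the paper's: necessity via Lemmas~\ref{lem:I(a,b)-lemme1} and~\ref{lem:IntNonEssentielle} plus the bouquet/necklace unpacking, sufficiency by first forcing connectedness of $a\cap b$ via Lemma~\ref{lem:acapbnonconnexe}, then eliminating the arc cases via Lemma~\ref{lem:ArcEssentiel} and a two-curve construction violating~(4). One small omission: in the sufficiency direction, when you write ``a connected closed subset of a simple closed curve is either a point or a subarc'', you are tacitly assuming $a\neq b$; the full circle is also a connected closed subset. The paper explicitly notes that condition~(3) rules out $a=b$ (the construction of Lemma~\ref{lem:ArcEssentiel} applies verbatim when the ``arc'' is all of $a$), and you should insert this one-line check before your three-case split.
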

This enumeration of conditions expressed only in terms of the graph structure
of $\TransFin(\Sigma)$, with the addition of the conditions $\One(a)$ and
$\One(b)$, will be also denoted by $I(a,b)$, for {\em inessential
intersection} (of one-sided curves).
\begin{proof}
  First, let us check that if $a$ and $b$ have one, inessential intersection
  point then $I(a,b)$ holds. Condition~(1) holds by definition,
  and~(2) follows from Lemma~\ref{lem:I(a,b)-lemme1}.
  The negation of condition~(3) would imply that the cardinal of $a\cap b$ is at least~2.
  Indeed, $B(a,\alpha,\beta)$ implies that $\alpha\cap\beta$ is a point lying
  in $a$. Thus, the bouquet conditions imply that both $\alpha\cap\beta$ and
  $\alpha'\cap\beta'$ lie in $a\cap b$. And the condition $N(a,\alpha,\beta')$
  then implies that $a\cap\alpha$ and $a\cap\beta'$ are disjoint, hence
  the two points $\alpha\cap\beta$ and $\alpha'\cap\beta'$ are distinct.
  Finally, condition~(4) follows from Lemma~\ref{lem:IntNonEssentielle}.
  Indeed, this lemma implies that the two curves $c_1$ and $c_2$ should
  both contain a point of $a\cap b$, hence they cannot be disjoint.
  
  Now, let $a$ and $b$ be any two nonseparating curves and suppose that $I(a,b)$.
  By conditions~(1) and~(2), the intersection $a\cap b$ is non empty, and
  connected. Along the lines of the proof of Lemma~\ref{lem:ArcEssentiel},
  we can see that condition~(3) implies that $a\neq b$, so $a\cap b$
  is an inessential intersection point, or an arc. Suppose for contradiction
  that it is a nondegenerate arc. By Lemma~\ref{lem:ArcEssentiel} and
  condition~(3), this intersection arc cannot be essential.
  Now Figure~\ref{fig:I-xB}, right,
  shows the desired contradiction with condition~(4).
  \begin{figure}[htb]
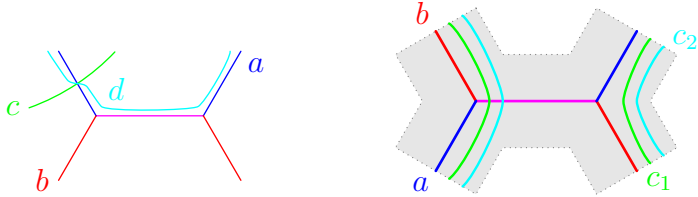

  \hfill
  \begin{asy}
  import geometry;

  point A = 28*dir(120), B = 14*dir(120), C = (0,0), D = (40,0),
    E = D+28*dir(60);
  path d = A+(-4,0)..tension 3 ..(B-4*dir(-20))..B..
    (B+4*dir(-20))..tension 3 ..(C+4*dir(60))..tension 4 ..
    (D+4*dir(120))..tension 3 ..(E+(-4,0));
  draw (C--D, magenta);
  draw (A--C, blue);  draw(D--E, blue);
  draw (C--(28*dir(-120)), red); draw(D--((40,0)+28*dir(-60)), red);
  draw ((-25,3)..B..(7,24), green);

  draw (d, cyan);

  label ("$b$", -28*dir(60), W, red); label("$a$", E, SE, blue);
  label("$c$", (-25,3), W, green); label("$d$", 4*dir(60), NE, cyan);
  \end{asy}
  \hfill
  \begin{asy}
  import geometry;
  
  point A = (0,70)/2, B = (-60,35)/2, C = (-60,-35)/2, D = (0, -70)/2,
    E = (90, -70)/2, F = (150, -35)/2, G = (150, 35)/2, H = (90, 70)/2;
  point O1 = (0,0)/2, O2 = (90,0)/2;
  point U = (-40,0)/2, V = (130, 0)/2;
  path contour = A--B--U--C--D--((20,-35)/2)--((70,-35)/2)--
    E--F--V--G--H--((70, 35)/2)--((20,35)/2)--cycle;
  
  fill (contour, lightgray);
  draw (contour, gray+dotted);

  draw (O1--O2, magenta+1pt);
  draw ((A+B)/2--O1, red+1pt); label("$b$", (A+B)/2, NW, red);
  draw ((C+D)/2--O1, blue+1pt); label("$a$", (C+D)/2, SW, blue);
  draw ((E+F)/2--O2, red+1pt); draw ((G+H)/2--O2, blue+1pt);
  
  draw (((4*A+2*B)/6)..tension 2 ..(5,0)..tension 2 ..
    ((4*D+2*C)/6), green+0.9pt);
  draw (((5*A+B)/6)..tension 2 ..(10,0)..tension 2 ..
    ((5*D+C)/6), cyan+0.9pt);
  
  draw (((4*F+2*E)/6)..tension 2 ..(55,0)..tension 2 ..
    ((4*G+2*H)/6), green+0.9pt);
  draw (((5*F+E)/6)..tension 2 ..(60,0)..tension 2 ..
    ((5*G+H)/6), cyan+0.9pt);
  
  label("$c_1$", ((4*F+2*E)/6), SSE, green);
  label("$c_2$", ((5*G+H)/6), ENE, cyan);
  \end{asy}
  \hspace{3cm}
  \caption{Some configurations of curves for properties $I$ and $xB$}
  \label{fig:I-xB}
  \end{figure}
\end{proof}

Finally, we deal with extra bouquets.
We denote by $xB(a,b,c)$ the property that
$T(a,b)$, $T(a,c)$, $I(b,c)$ all hold and moreover:
for all $d$ such that $B(a,b,d)$ holds, $D(c,d)$ does not.
\begin{lemma}
  Let $a$, $b$ and $c$ be such
  that $T(a,b)$, $T(a,c)$ and $I(b,c)$, with $b$ and $c$ one-sided.
  Then $xB(a,b,c)$ holds if and only if the intersection points
  $a\cap b$, $a\cap c$ and $b\cap c$ coincide.
\end{lemma}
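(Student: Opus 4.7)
The plan is to reduce the statement directly to Lemma~\ref{lem:IntNonEssentielle} by unpacking the definition of $xB$. Under the standing hypotheses $T(a,b)$, $T(a,c)$, and $I(b,c)$, let $p$, $q$, $r$ denote the single points $a\cap b$, $a\cap c$, $b\cap c$ respectively. By Corollary~\ref{coro:TypesPrecis}, the predicate $D(c,d)$ is equivalent to the disjointness of $c$ and $d$. Therefore $xB(a,b,c)$ amounts to the assertion that there exists no $d$ such that $B(a,b,d)$ holds and $c\cap d = \emptyset$ simultaneously.

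The first small observation I would use is purely set-theoretic: $p\in c$ if and only if $p=q=r$. Indeed, if $p\in c$, then $p\in a\cap c=\{q\}$ forces $p=q$, and $p\in b\cap c=\{r\}$ forces $p=r$; the converse is trivial.

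The second step is an application of Lemma~\ref{lem:IntNonEssentielle} with the variables relabeled. In that lemma the curves called $a,b$ are one-sided with inessential intersection, and $c$ is a transverse neighbor of $a$; I would plug in its $(a,b,c)$ as our $(b,c,a)$, which is legitimate since $I(b,c)$ supplies the required one-sided pair with inessential intersection, and $T(a,b)$ supplies the transverse neighbor. After this substitution, the conclusion of the lemma reads: there exists $d$ with $B(a,b,d)$ and $D(c,d)$ if and only if the point $p=a\cap b$ does not belong to $c$.

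Chaining the two equivalences finishes the argument: $xB(a,b,c)$ holds iff no such $d$ exists, iff $p\in c$, iff $p=q=r$. The argument is essentially bookkeeping once the hypotheses are unfolded; the only step requiring care is matching the notation of Lemma~\ref{lem:IntNonEssentielle} to the present roles of $a,b,c$, and this is what I expect to be the ``hardest'' part of writing it down cleanly.
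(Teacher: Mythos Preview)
Your proof is correct. The approach is essentially the same as the paper's, but you package it more economically: where the paper re-runs the construction of pushing $b$ aside (as in Figure~\ref{fig:I-xB}, left) to produce the witness $d$ in the ``not coincide'' case, you observe that this is precisely the content of Lemma~\ref{lem:IntNonEssentielle} under the relabeling $(a,b,c)\mapsto(b,c,a)$ and invoke it directly, getting both directions of the biconditional at once. Your set-theoretic observation that $p\in c$ is equivalent to $p=q=r$ also implicitly contains the paper's remark that if the three points do not all coincide then they are pairwise distinct.
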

\begin{proof}
  Of course if these points coincide, then property $xB(a,b,c)$
  holds: every curve $d$ such that $B(a,b,d)$ holds, must contain
  this point and hence cannot be disjoint from $c$.
  
  Now suppose that these points do not coincide, hence, are three
  pairwise distinct points. Then, we may push $b$ aside, in order
  to find a curve $d$ which does not intersect $c$ any more, as
  the intersection $b\cap c$ is not essential. This curve $d$,
  obtained by pushing $b$, can be made to satisfy $T(b,d)$,
  while crossing $b$ precisely at the point $a\cap b$, and
  this intersection can be made transverse;
  the illustration
  of this situation is similar to Figure~\ref{fig:I-xB}, left,
  and this time we leave it to the reader.
  This yields a curve $d$
  such that $B(a,b,d)$ holds and $d$ disjoint from $c$.
\end{proof}

\section{Proof of Theorem~\ref{thm:AutC1}}\label{sec:AutHomeo}

Here as above, $\Sigma$ is a connected surface admitting a nonseparating
closed curve.

\subsection{From bijections to homeomorphisms}
In order to prove Theorem~\ref{thm:AutC1}, it suffices to prove that every
automorphism of $\TransFin(\Sigma)$ is supported by a bijection of the
surface, in virtue of the following observation.
\begin{proposition}\label{prop:BijHomeo}
  Let $f\colon\Sigma\to\Sigma$ be a bijection. We suppose that for every
  nonseparating simple closed curve $\alpha\subset\Sigma$, the sets $f(\alpha)$
  and $f^{-1}(\alpha)$ are also nonseparating simple closed curves in $\Sigma$.
  Then $f$ is a homeomorphism.
\end{proposition}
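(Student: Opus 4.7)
The plan is to prove continuity of $f$; by the symmetric hypothesis on $f^{-1}$, continuity of the inverse follows the same way, so $f$ will be a homeomorphism. The main tool throughout is the following construction, which I would establish first: given a convergent sequence $y_n \to y$ in $\Sigma$ and a point $z \neq y$, one can produce a nonseparating simple closed curve $\gamma \subset \Sigma$ containing $y$ and infinitely many $y_n$, while avoiding $z$. Working in a disk chart around $y$ that does not contain $z$, one first extracts a subsequence $y_{n_k}$ lying on an embedded arc $A$ through $y$ (passing to a sub-subsequence with controlled approach direction and interpolating between consecutive points). One then extends $A$ into a simple closed curve $\gamma = A \cup B$ by adjoining an exterior arc $B$ chosen to go around a nonseparating handle of $\Sigma$ and to avoid $z$. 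This is possible because $\Sigma$ is nonspherical and $\Sigma \smallsetminus \{z\}$ remains nonspherical.

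Applying this construction, I would first establish a \emph{properness} statement: for every compact $V \subset \Sigma$, the image $f(V)$ is relatively compact. Otherwise some $v_n \in V$ would have $f(v_n) \to \infty$; passing to a subsequence $v_n \to v \in V$, I build a nonseparating simple closed curve $\alpha$ through $v$ containing $v_{n_k}$ for infinitely many $k$. Then $f(\alpha)$ is a nonseparating simple closed curve, hence a compact subset of $\Sigma$, yet it contains $f(v_{n_k})$ for all $k$, contradicting the escape to infinity of $(f(v_n))$.

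Now suppose by contradiction that $f$ is discontinuous at some $p \in \Sigma$: there exists a sequence $p_n \to p$ with $p_n \neq p$ and $f(p_n) \not\to f(p)$. By properness, $\{f(p_n)\} \cup \{f(p)\}$ lies in a compact subset of $\Sigma$, so up to subsequence I may assume $f(p_n) \to q$ for some $q \neq f(p)$, with all $f(p_n)$ distinct from $f(p)$ (automatic by injectivity of $f$). Applying the construction with $y_n = f(p_n)$, $y = q$, $z = f(p)$, I obtain a nonseparating simple closed curve $\beta$ containing infinitely many $f(p_{n_k})$ and avoiding $f(p)$. By the hypothesis on $f^{-1}$, the preimage $f^{-1}(\beta)$ is also a nonseparating simple closed curve, hence a closed subset of $\Sigma$; it contains infinitely many $p_{n_k}$, so by closedness it contains their limit $p$. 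Thus $f(p) \in \beta$, contradicting the construction.

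The main obstacle is the curve construction itself, specifically: extracting from a convergent sequence a sub-subsequence lying on an embedded arc through the limit, and then closing such an arc into a nonseparating simple closed curve avoiding a prescribed point. Both reduce to standard but careful manipulations with arcs and handles on a nonspherical surface; the second step uses that deleting a single point from $\Sigma$ preserves the existence of nonseparating simple closed curves, which itself follows from the fact that any nonseparating simple closed curve $\alpha$ misses most points of $\Sigma$ and remains nonseparating in $\Sigma \smallsetminus \{z\}$ for every $z \notin \alpha$.
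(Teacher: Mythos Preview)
Your argument is correct. It differs from the paper's proof mainly in where the curve is built and in the extra properness step. The paper works entirely on the domain side: it puts the sequence $x_n\to x$ on a nonseparating curve $\alpha$, pushes forward to $f(\alpha)$, and then performs a local surgery on $f(\alpha)$ inside a neighborhood $V$ of $f(x)$ to obtain $\beta$ with $f(x)\notin\beta$ but $\beta=f(\alpha)$ outside $V$; since all $f(x_n)$ lie outside $V$, they automatically lie on $\beta$, and one concludes as you do. This avoids any need to know where the $f(x_n)$ accumulate, so the properness lemma never enters. Your route instead first establishes properness, extracts a limit $q\neq f(p)$ for the image sequence, and builds $\beta$ from scratch on the image side through $q$ and the $f(p_{n_k})$ while avoiding $f(p)$. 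Both approaches terminate with the same ``$f^{-1}(\beta)$ is closed, contains the sequence, hence the limit'' contradiction. The paper's version is a bit shorter and the surgery step (perturbing an existing nonseparating curve off a point) is marginally easier to justify than building a fresh nonseparating curve through a prescribed arc while avoiding a point; on the other hand, your properness lemma is a clean standalone statement and your construction is more symmetric in how it treats domain and image.
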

\begin{proof}
  If our hypothesis was that $f$ and $f^{-1}$ are closed (\textsl{i.e.}, send
  closed sets to closed sets), then $f$ would be a homeomorphism. So our
  strategy is to use our hypothesis here in a similar fashion. We need only
  prove that $f$ is continuous, the argument for $f^{-1}$ is symmetric.
  
  Let $x\in\Sigma$ and suppose that $f$ is not continuous at $x$. Then there
  exists a sequence $(x_n)_{n\geqslant 0}$ of distinct points converging
  to $x$, and a neighborhood $V$ of $f(x)$, such that for all $n$, we have
  $f(x_n)\not\in V$.
  Notice that in the open unit disk of the plane, up to homeomorphism, there
  is only one sequence of distinct points converging to the origin.
  With this in head, we may construct an embedded arc, in $\Sigma$, with one
  end at $x$, and which contains all the points $x_n$. Then we may construct a
  nonseparating simple closed curve $\alpha$ containing this arc.
  By hypothesis, $f(\alpha)$ is a nonseparating closed curve in $\Sigma$, which
  contains $f(x)$. We may perform a surgery of $f(\alpha)$ inside $V$, to
  obtain a nonseparating simple closed curve $\beta$, which coincides with
  $f(\alpha)$ outside $V$ but which does not contain $f(x)$.
  Now $f^{-1}(\beta)$ is, by hypothesis, a closed subset of $\Sigma$, which
  contains all the points $x_n$ but not $x$. This is a contradiction.
\end{proof}

\subsection{The adjacency relation $\adjacent$}\label{sec:Adjacency}

Let $E_T(\TransFin(\Sigma))$ denote the set of edges $\{a,b\}$ of
$\TransFin(\Sigma)$ satisfying $T(a,b)$. Then we have a map
\[ \Point\colon \ E_T(\TransFin(\Sigma))\to\Sigma, \]
which to each edge $\{a,b\}$, associates the intersection point $a\cap b$.
The main part of proof of Theorem~\ref{thm:AutC1}
consists in showing that we can express the
equality
\[ \Point(a,b) = \Point(\alpha, \beta) \]
in terms of the graph.
For this we introduce the equivalence relation $\adjacent$ on
$E_T(\TransFin(\Sigma))$ as follows.
Let $\{a,b\}$ in $E_T(\TransFin(\Sigma))$. If $\{a,b,c\}$
is a $3$-clique of $\TransFin(\Sigma)$ of type bouquet we
set $\{a,b\}\adjacent \{a,c\}$.
We also set
$\{a,b\}\adjacent \{a,c\}$
if $a$, $b$ and $c$ satisfy the ``extra bouquet'' condition, denoted
above by $xB(a,b,c)$, see paragraph~\ref{ssec:xB} (this is void
when $\Sigma$ is orientable).
Then $\adjacent$ is defined as the equivalence relation generated by these
relations.
When $\Sigma$ is orientable, the relation $\adjacent$ corresponds to the
equivalence relation on triangles, generated by adjacency, in the subgraphs
of $\TransFin(\Sigma)$ induced by curves passing through a common point.
This is what motivates our notation.

The relation $\{a,b\}\adjacent \{a', b'\}$ obviously implies
$\Point(a,b) = \Point(a',b')$. We will see that the converse is not true,
and describe geometrically the equivalence classes in
Section~\ref{sec:LocalSubgraphs}, but for now we will only need the
following partial statement.
\begin{proposition}\label{prop:GermeAdjacent}
  Let $a$, $b$, $a'$, $b'$ be such that $T(a,b)$ and $T(a',b')$.
  Suppose that they have the same intersection point,
  $x=\Point(a,b)=\Point(a',b')$, and suppose that the germs of $a$ and $a'$
  coincide, \textsl{i.e.}, there exists a neighborhood $V$ of $x$ such
  that $a\cap V=a'\cap V$. 
  Then $\{a,b\}\adjacent\{a',b'\}$.
\end{proposition}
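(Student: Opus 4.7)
The plan is to build a short chain of $\adjacent$-moves connecting $\{a,b\}$ to $\{a',b'\}$ using two auxiliary curves $\hat a$ and $c$ passing through $x$, each ``topologically transverse to everything'' at $x$. The germ hypothesis enters through the simple but crucial observation that any curve topologically transverse to $a$ at $x$ is automatically topologically transverse to $a'$ at $x$, because $a$ and $a'$ coincide in the neighborhood $V$.

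Concretely, I would first construct a nonseparating simple closed curve $\hat a$ through $x$ such that
\[
\hat a \cap a \;=\; \hat a \cap a' \;=\; \hat a \cap b \;=\; \{x\},
\]
each intersection being topologically transverse. In a small disk $D$ around $x$, the curves $a$, $b$, $b'$ contribute at most three distinct tangent directions (and $a'$ contributes none new, since its germ at $x$ equals that of $a$); by the change of coordinates principle I choose a transverse direction avoiding these and draw a small arc $\hat a_{0}\subset D$ through $x$ in that direction, meeting $a\cup a'\cup b\cup b'$ only at $x$ inside $D$. A surgery argument analogous to those used in the proofs of Lemmas~\ref{lem:drenccc} and~\ref{lem:CestLeBouquet} then closes $\hat a_{0}$ into a nonseparating simple closed curve $\hat a$ in $\Sigma$ disjoint from $a\cup a'\cup b$ outside $D$. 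By the same procedure applied to the enlarged family $\{a, a', b', \hat a\}$ (and $a$ for free) I construct a nonseparating simple closed curve $c$ through $x$ with
\[
c \cap a \;=\; c \cap a' \;=\; c \cap b' \;=\; c \cap \hat a \;=\; \{x\},
\]
each intersection topologically transverse.

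With $\hat a$ and $c$ in hand, the four triples
\[
\{a,b,\hat a\},\quad \{a,\hat a,c\},\quad \{\hat a,c,a'\},\quad \{c,a',b'\}
\]
are $3$-cliques of type bouquet, all with common bouquet point $x$. Applying the defining relation of $\adjacent$ to each in turn yields
\[
\{a,b\}\;\adjacent\;\{a,\hat a\}\;\adjacent\;\{\hat a,c\}\;\adjacent\;\{c,a'\}\;\adjacent\;\{a',b'\},
\]
which is the desired conclusion. The pivotal middle bouquet $\{\hat a, c, a'\}$ is legitimate precisely because of the germ hypothesis: transversality of $\hat a$ and $c$ to $a'$ at $x$ is automatic from their transversality to $a$ at $x$, and their global disjointness from $a'$ outside $D$ was built into the construction.

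The main obstacle is the global extension step for $\hat a$ and $c$, namely showing that the chosen local arcs can be closed up into \emph{nonseparating} simple closed curves avoiding a fixed finite collection of curves outside $D$. This is a topological surgery argument in the spirit of Section~\ref{sec:Lemmes1-2-3}: one starts from any completion to a nonseparating curve, then pushes it away from the unwanted intersections using the richness of the complementary surface $\Sigma\smallsetminus(a\cup a'\cup b\cup b'\cup\hat a)$. Once this is granted, the bouquet chain above gives the result immediately, and no use of the $xB$-extension of $\adjacent$ from Section~\ref{ssec:xB} is required.
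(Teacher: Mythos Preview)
There is a genuine gap in the extension step. The claim that a small transverse arc $\hat a_0$ at $x$ can be ``closed up by surgery'' into a nonseparating simple closed curve meeting $a\cup a'\cup b$ only at $x$ is not justified, and in fact can fail. The surgeries in Lemmas~\ref{lem:drenccc} and~\ref{lem:CestLeBouquet} produce curves that are \emph{allowed} to cross $a,b,c$ once each; they never produce a curve disjoint from a prescribed union of curves outside a disk. Here you need the two ends of $\hat a_0$ on $\partial D$ to lie in the same component of $\Sigma\smallsetminus(D\cup a\cup a'\cup b)$, and this is exactly what can go wrong because $a'$, while sharing the germ of $a$ at $x$, is otherwise unconstrained.

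A concrete obstruction on the torus: take $a,b$ the standard meridian and longitude meeting at $x$, so $\Sigma\smallsetminus(a\cup b)$ is an open square whose four corners all map to $x$. Any $\hat a$ with $\hat a\cap a=\hat a\cap b=\{x\}$ must be an arc in this square joining two diagonally opposite corners. Now choose $a'$ equal to $a$ on a small $V$, and outside $V$ let $a'$ cross $b$ twice so that one of its subarcs runs from the left side of the square to the right side. That subarc separates the top corners from the bottom corners, hence separates every pair of diagonally opposite corners; any candidate $\hat a$ is forced to meet $a'$ at a second point, so no curve with $\hat a\cap a'=\{x\}$ exists and your bouquet $\{\hat a,c,a'\}$ cannot be formed. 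The same issue recurs, only worse, for $c$, which must avoid four curves simultaneously.

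The paper circumvents this by not trying to build a single universal transversal. It first uses the connectedness of fine arc graphs (Lemmas~\ref{lem:ArcConnexe} and~\ref{lem:ArcConnexeBis}) to interpolate between $a$ and $a'$ by a chain $a=a_0,\ldots,a_n=a'$ in which consecutive curves meet \emph{only} along the fixed arc $a\cap V$. In that very special configuration Observation~\ref{obs:CCarc} guarantees a transversal $d$ with $T(a_i,d)$ and $T(a_{i+1},d)$, and Lemma~\ref{lem:SpheresConnexes} (itself relying on arc-graph connectedness) then supplies the bouquet chain. In short, the arc-complex connectedness is doing the real work that your surgery sketch tries to bypass.
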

\begin{remark}\label{rmk:Klein}
  If $\Sigma$ is a Klein bottle, there are no couples $\{a,b\}$ of two-sided
  curves such that $T(a,b)$, and for every one-sided curve $b$, the curves
  $c$ such that $T(b,c)$ holds fall into only two isotopy classes: that of $b$
  and that of a two-sided curve, prescribed by $b$. It follows that, without
  the extra bouquets in the definition of $\adjacent$, there would
  have been too many classes of $\adjacent$, as such a class would remember
  the isotopy class of a one-sided curve, and Proposition~\ref{prop:GermeAdjacent}
  would not be true in this special case. These extra bouquets will be used
  in the proof of Lemma~\ref{lem:SpheresConnexes} below.
\end{remark}
We postpone the proof of the proposition to the end of this section;
for now we will explain how it implies Theorem~\ref{thm:AutC1}.

\subsection{Proof of Theorem~\ref{thm:AutC1}}
If $a,b,c$ are vertices of $\TransFin(\Sigma)$, we denote by $F(a,b,c)$ the
property that $T(a,b)$ holds, and there exists an edge $\{a',b'\}$ with
$\{a,b\}\adjacent\{a',b'\}$ such that $\{a',b',c\}$ is a $3$-clique which is
not of type bouquet.
Note that this property $F(a,b,c)$ implies that $c$ does not contain the
point $\Point(a',b') = \Point(a,b)$. The next lemma asserts that $F(a,b,c)$
actually characterises this geometric property, and the letter $F$ stands for:
``$a\cap b$ is far from $c$''.
\begin{lemma}\label{lem:AppartenanceGraphe}
  Let $a,b,c$ be vertices of $\TransFin(\Sigma)$, and suppose $T(a,b)$ holds.
  Then
  \[ F(a, b, c) \Leftrightarrow \Point(a,b)\not\in c. \]
\end{lemma}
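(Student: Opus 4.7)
The plan is to prove both implications, with Proposition~\ref{prop:GermeAdjacent} as the key tool for ($\Leftarrow$).

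For ($\Rightarrow$), I will argue by contradiction. Assume $F(a,b,c)$ and pick $\{a',b'\}$ witnessing it: $\{a,b\}\adjacent\{a',b'\}$ and $\{a',b',c\}$ is a 3-clique not of type bouquet. Since $\adjacent$ preserves $\Point$, we have $\Point(a',b')=\Point(a,b)=x$. If one assumes $x\in c$, then $x\in a'\cap c$ and $x\in b'\cap c$; as $\{a',c\}$ and $\{b',c\}$ are edges of $\TransFin(\Sigma)$, each of these intersections has at most one point, which is transverse. Hence $a'\cap c=b'\cap c=\{x\}$ transversally, and together with the transverse intersection $a'\cap b'=\{x\}$ coming from $T(a',b')$, this forces $\{a',b',c\}$ to be a bouquet at $x$---a contradiction.

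For ($\Leftarrow$), assume $x=\Point(a,b)\notin c$. The goal is to construct curves $a',b'$ such that $T(a',b')$ holds with $a'\cap b'=\{x\}$, the germ of $a'$ at $x$ equals the germ of $a$, and both $a',b'$ are neighbors of $c$ in $\TransFin(\Sigma)$. Proposition~\ref{prop:GermeAdjacent} then yields $\{a,b\}\adjacent\{a',b'\}$, and $\{a',b',c\}$ is a 3-clique; since $x\notin c$, the unique intersection point of $a'$ and $b'$ cannot coincide with any point on $c$, so the three pairwise intersection points cannot all be equal and $\{a',b',c\}$ is not of type bouquet. This witnesses $F(a,b,c)$.

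To construct $a',b'$, I will fix a small disk $D$ around $x$ disjoint from $c$, in which $a$ and $b$ cross transversally at $x$. Inside $D$, set $a'=a$ and $b'=b$, preserving the germs and the transverse intersection at $x$. Outside $D$, each arc is closed into a simple closed curve by joining its two endpoints on $\partial D$ with a simple arc in $\Sigma\smallsetminus D$. By Fact~\ref{fact:NonSep} there exists a simple closed curve in $\Sigma$ meeting $c$ transversally exactly once; the change of coordinates principle then lets me choose each completing arc to cross $c$ transversally exactly once, at a point away from $x$. By Fact~\ref{fact:NonSep} again, this makes $a'$ and $b'$ nonseparating (hence vertices of $\TransFin(\Sigma)$) and makes $\{a',c\}$ and $\{b',c\}$ edges. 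A further application of the change of coordinates principle allows the two completing arcs to be taken disjoint from each other, so that $a'\cap b'=\{x\}$.

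The main obstacle lies in this simultaneous construction of the two completing arcs outside $D$: fixing their boundary values on $\partial D$, ensuring nonseparation of the closed curves, controlling their intersections with $c$ to a single transverse point each, and keeping them disjoint from one another. All this reduces to careful but standard applications of the change of coordinates principle recalled in the preliminaries, possibly with minor case analysis when $c$ is one-sided.
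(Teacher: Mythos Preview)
Your proof follows essentially the same strategy as the paper: the forward direction is unwound from the definitions (the paper simply calls it ``direct''), and for the backward direction you construct $a',b'$ agreeing with $a,b$ in germ at $x$ and forming a non-bouquet $3$-clique with $c$, then invoke Proposition~\ref{prop:GermeAdjacent}.

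The one point that deserves more care is the construction of the two completing arcs outside your disk $D$. Asserting that the ``change of coordinates principle'' produces two \emph{disjoint} simple arcs in $\Sigma\smallsetminus\mathring{D}$ with the four prescribed (alternating) endpoints on $\partial D$, each crossing $c$ exactly once transversally, is not quite a direct application of that principle; it requires a short argument (for instance, first building one arc, observing that the resulting closed curve is nonseparating by Fact~\ref{fact:NonSep}, and then working in its complement). The paper sidesteps this by instead placing $x$ inside a regular neighbourhood of $c$ and giving explicit curves there: pushed copies of $c$ when $c$ is one-sided, and a pushed copy together with an auxiliary transversal $d$ when $c$ is two-sided (Figure~\ref{fig:OnComplete2germes}). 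This makes the disjointness and intersection conditions visible at a glance, and also shows that you do not need both $a'$ and $b'$ to cross $c$ --- in the two-sided case the paper takes $b'$ disjoint from $c$. Your approach is sound, but the paper's local construction is cleaner and avoids the bookkeeping you flag at the end.
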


\begin{proof}
  The direct implication follows directly from the
  definitions; we have to prove
  the converse implication. Suppose $\Point(a,b)\not\in c$.
  Since $\Sigma$ is connected, there exists a regular neighborhood of $c$
  containing the point $\Point(a,b)$. Depending on whether $c$ is one-sided or
  two-sided, up to homeomorphism, this leads to only two distinct situations.
  In Figure~\ref{fig:OnComplete2germes}, we represent in bold the germs of
  the curves $a$ and $b$ near the point $a\cap b$, and show how to complete
  these germs to new curves $a'$, $b'$ such that $\{a',b',c\}$ is a $3$-clique
  not of type bouquet.
  When $c$ is one-sided (see Figure~\ref{fig:OnComplete2germes}, left),
  we may use two curves $a'$, $b'$ obtained by pushing $c$, while when $c$
  is two-sided (see Figure~\ref{fig:OnComplete2germes}, right), we have to
  use a curve $d$ which meets $c$ once transversely.
  
  Now, by Proposition~\ref{prop:GermeAdjacent}, we have
  $\{a,b\}\adjacent\{a',b'\}$, and $\{a',b',c\}$ is a non-bouquet $3$-clique,
  so we have $F(a,b,c)$ by definition.
  \begin{figure}[htb]
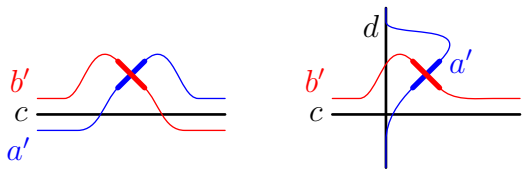

  \begin{asy}
    import geometry;
  
    picture commun, gauche, droite;
    path agros = (0, 10)--(10, 20), bgros = (0, 20)--(10, 10), c = (-30, 0)--(40,0);
    
    draw (commun, c, 1pt+black); draw(commun, agros, 2pt+blue);
    draw (commun, bgros, 2pt+red);
    add(gauche, commun); add(droite, commun);
    
    path b1fingauche = (-30, 6){dir(0)}..(-20, 6){dir(0)}..(0,20){dir(-45)};
    path b2fingauche = (10, 10){dir(-45)}..{dir(0)}(25, -6)--(40, -6);
    path a1fingauche = (-30, -6)--(-15,-6){dir(0)}..(0,10){dir(45)};
    path a2fingauche = (10, 20){dir(45)}..{right}(30, 6)--(40,6);
    draw (gauche, b1fingauche, red); draw(gauche, b2fingauche, red);
    draw (gauche, a1fingauche, blue); draw(gauche, a2fingauche, blue);
    label(gauche, "$a'$", (-30,-6), SW, blue);
    label(gauche, "$c$", (-30,0), W);
    label(gauche, "$b'$", (-30,6), NW, red);
    
    path d = (-10, -20)--(-10, 40);
    draw (droite, d, 1pt+black);
    draw (droite, b1fingauche, red);
    path b2findroite = (10,10){dir(-45)}..{right}(30, 6)--(40,6);
    draw (droite, b2findroite, red);
    path a1findroite = (-10,-20)--(-10,-10){up}..(0,10){dir(45)};
    path a2findroite = (10,20){dir(45)}..{up}(-10,35)--(-10,40);
    draw (droite, a1findroite, blue); draw(droite, a2findroite, blue);
    label(droite, "$c$", (-30, 0), W);
    label(droite, "$b'$", (-30,6), NW, red);
    label(droite, "$a'$", (10,20), E, blue);
    label(droite, "$d$", (-10,40), SW);
    
    add(gauche, (-50, 0)); add(droite, (60,0));
  \end{asy}
  \caption{Completing the germs of $a$ and $b$ to form a non-bouquet $3$-clique $(a',b',c)$}
  \label{fig:OnComplete2germes}
  \end{figure}
\end{proof}
\begin{corollary}\label{cor:AppartenanceGraphe}
  Suppose $T(a,b)$ and $T(\alpha,\beta)$ hold.
  Then $\Point(a,b)\neq\Point(\alpha,\beta)$ if and only if there
  exists a nonseparating closed curve $c$ such that 
  $F(\alpha, \beta, c)$ holds but not $F(a,b,c)$. 
\end{corollary}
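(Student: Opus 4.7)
The plan is to translate the entire statement through Lemma~\ref{lem:AppartenanceGraphe}, reducing the corollary to the purely topological claim that two distinct points of $\Sigma$ can be separated by a nonseparating simple closed curve, in the sense that there exists such a curve containing the first point but not the second. Throughout, set $p=\Point(a,b)$ and $q=\Point(\alpha,\beta)$.

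For the reverse implication, suppose a nonseparating simple closed curve $c$ satisfies $F(\alpha,\beta,c)$ but not $F(a,b,c)$. Lemma~\ref{lem:AppartenanceGraphe} immediately gives $q\notin c$ while $p\in c$, which forces $p\neq q$. No further geometric input is needed for this half.

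For the forward implication, assume $p\neq q$. The naive candidate is $c=a$: since $T(a,b)$ holds, $a$ is a nonseparating simple closed curve passing through $p$, so if $q\notin a$ we are done by a second application of Lemma~\ref{lem:AppartenanceGraphe}. If instead $q\in a$, the plan is to perform a local surgery on $a$ near $q$. Choose a small open disk $D$ centered at $q$, disjoint from $p$, such that $a\cap D$ is a single arc crossing $D$ as a diameter, and replace this diameter by another simple arc in $D$ with the same endpoints but missing $q$. The resulting curve $c$ coincides with $a$ outside $D$, hence contains $p$ and avoids $q$; being isotopic to $a$ through an isotopy supported in $D$, it remains nonseparating.

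The only technical point, and the closest thing to an obstacle, is verifying that the diameter-replacement preserves nonseparation; I do not expect this to cause real trouble, since it follows from the standard fact that an isotopy of a simple closed curve supported in a disk does not alter the homeomorphism type of the complement. With that in hand, applying Lemma~\ref{lem:AppartenanceGraphe} twice to this $c$ yields $F(\alpha,\beta,c)$ and the negation of $F(a,b,c)$, completing the forward direction.
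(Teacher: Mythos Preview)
Your argument is correct and matches the paper's approach: the paper simply states that the corollary is a direct consequence of Lemma~\ref{lem:AppartenanceGraphe}, leaving to the reader exactly the translation and the small surgery you spelled out. Your concern about the diameter-replacement is well handled---the new curve is isotopic to $a$ in $\Sigma$, so it remains nonseparating.
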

The corollary is a direct consequence of Lemma~\ref{lem:AppartenanceGraphe}.
It follows that the equality $\Point(a,b)=\Point(\alpha,\beta)$ can be
expressed in terms of the graph structure of $\TransFin(\Sigma)$, because,
as a consequence of Corollary~\ref{coro:TypesPrecis}, being a $3$-clique
not of type bouquet is also characterized in terms of this graph structure.
Now we can conclude the proof of Theorem~\ref{thm:AutC1}, provided
Proposition~\ref{prop:GermeAdjacent} holds.
\begin{proof}[Proof of Theorem~\ref{thm:AutC1}]
  Let $\varphi$ be an automorphism of $\TransFin(\Sigma)$.
  Given a point $x$ in $\Sigma$, we choose two nonseparating simple closed
  curves $a$, $b$ intersecting exactly once, transversely, at $x$,
  and set $\varphi_\Sigma(x)=\Point(\varphi(a),\varphi(b))$. This
  formula is valid because, by Proposition~\ref{prop:Types},
  $\varphi(a)$ and $\varphi(b)$ are still nonseparating simple closed
  curves intersecting exactly once. The point $\varphi_\Sigma(x)$
  does not depend on the choice of $(a,b)$, because if $(\alpha,\beta)$
  is another choice, the equalities $\Point(a,b)=\Point(\alpha,\beta)$
  and $\Point(\varphi(a),\varphi(b))=\Point(\varphi(\alpha),\varphi(\beta))$
  can be all expressed in terms of the graph structure of
  $\TransFin(\Sigma)$, thanks to Corollary~\ref{cor:AppartenanceGraphe}.
  Thus, the map $\varphi_\Sigma$ is well-defined, and by following the
  definitions we observe that the map
  $(\varphi^{-1})_\Sigma$ is its inverse: hence $\varphi_\Sigma$
  is a bijection of $\Sigma$. Finally, it follows from Lemma~\ref{lem:AppartenanceGraphe}
  that for any nonseparating simple closed curve $\alpha$, the curve
  $\varphi(\alpha)$ coincides with the set of points $\varphi_\Sigma(x)$
  as $x$ describes $\alpha$. In other words, the automorphism $\varphi$
  is realized by the bijection $\varphi_\Sigma$. Proposition~\ref{prop:BijHomeo}
  concludes.
\end{proof}

\subsection{Connectedness of some arc graphs}
In order to finally prove
Proposition~\ref{prop:GermeAdjacent},
we will first need a couple of elementary results on
fine arc graphs.

\begin{lemma}\label{lem:ArcConnexe}
  Let $S$ be a connected topological surface, with boundary, and let $x,y$ be
  two distinct points of $\partial S$. Let $\mathcal{E}\Afin{}(S, x, y)$ be the graph whose
  vertices are the simple arcs joining $x$ and $y$ and which meet $\partial S$ only at
  their ends, with an edge between two such arcs if and only if they are
  disjoint except at $x$ and $y$. Then the graph $\mathcal{E}\Afin{}(S, x, y)$ is connected.
\end{lemma}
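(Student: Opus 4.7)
I will show the stronger statement that any two vertices $\alpha,\beta$ of $\mathcal{E}\Afin{}(S,x,y)$ are at graph distance at most $2$, by constructing a common neighbor $\gamma$: namely, an arc from $x$ to $y$, meeting $\partial S$ only at $\{x,y\}$, and disjoint from $\alpha\cup\beta$ outside these endpoints. The path $\alpha\,\text{---}\,\gamma\,\text{---}\,\beta$ then joins them.

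The key observation is that $\alpha\cup\beta$ is a compact subset of $S$ meeting $\partial S$ only at $\{x,y\}$. Near $x$, the branches of $\alpha$ and $\beta$ emerge into the interior of $S$, dividing a small half-disk around $x$ into finitely many open sectors; on each side of $x$ at least one sector lies adjacent to $\partial S$ (the ``outermost'' sector, lying between $\partial S$ and the outermost branch of $\alpha\cup\beta$ at $x$). I pick such a sector, let $\sigma_x$ be a short embedded arc from $x$ into it, and define $\sigma_y$ symmetrically at $y$. The remaining task is to connect the far endpoints of $\sigma_x$ and $\sigma_y$ through $S\smallsetminus(\alpha\cup\beta)$.

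If $x$ and $y$ lie in the same component of $\partial S$, I route the joining path through a thin collar neighborhood of $\partial S$ along the boundary arc from $x$ to $y$, pushed slightly into the interior of $S$; since $\alpha\cup\beta$ meets $\partial S$ only at $\{x,y\}$, a sufficiently thin collar is disjoint from $\alpha\cup\beta$ away from small neighborhoods of $\{x,y\}$, and splicing with $\sigma_x,\sigma_y$ gives $\gamma$. If instead $x$ and $y$ lie on different components of $\partial S$, the collar strategy fails and I use $\alpha$ itself as a guide: first replace $\alpha$ and $\beta$ with nearby smooth arcs (adjacent in the graph via push-offs inside their regular-neighborhood disks, using that every topological surface admits a smooth structure), then take a thin tubular neighborhood $T$ of $\alpha\smallsetminus\{x,y\}$ and a push-off $\alpha^\ast$ of $\alpha$ to one side inside $T$. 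The intersections of $\alpha^\ast$ with $\beta$ lie close to the finitely many transverse points of $\alpha\cap\beta$ inside $T$, and each can be resolved by a small local detour of $\alpha^\ast$ on one side of $\beta$ within $T$, producing the required $\gamma$.

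The main obstacle is the second case, where the local detours near each intersection of $\alpha^\ast$ with $\beta$ must be chosen in a globally consistent way so that the resulting path is an embedded arc disjoint from both $\alpha$ and $\beta$; this is handled by orienting $T$ and choosing detour sides guided by the local crossing data, combined, if necessary, with an induction on the number of intersection points. The smoothing reduction at the outset is crucial to ensure that $\alpha\cap\beta$ is a finite transverse set, so that the surgery is of genuinely finite combinatorial complexity.
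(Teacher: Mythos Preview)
Your diameter-$2$ claim fails when $x$ and $y$ lie on different components of $\partial S$, so the strategy of producing a common neighbor cannot succeed in general. Take $S=[0,1]\times\bbS^1$ with $x=(0,0)$, $y=(1,0)$, $\alpha(t)=(t,0)$ and $\beta(t)=(t,\,nt\bmod 1)$ for an integer $n\geqslant 3$. Cutting along $\alpha$ turns $S$ into a square in which $\beta$ becomes $n$ disjoint diagonals, separating it into regions $R_0,\dots,R_n$. Any $\gamma$ with interior in $S\smallsetminus(\alpha\cup\beta)$ lies in a single $R_i$; but only $R_0,R_1$ touch $x$ while only $R_{n-1},R_n$ touch $y$, so no common neighbor exists. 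More is true: lifting to $[0,1]\times\R$ one checks that adjacent arcs have winding numbers differing by at most~$1$, hence $d(\alpha,\beta)\geqslant n$ and the graph has infinite diameter.

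This locates the flaw in your Case~2. A transverse crossing of $\alpha^\ast$ with $\beta$ cannot be removed by any modification supported in the tube $T$: near the crossing, $\beta\cap T$ is an arc running across $T$, and $\alpha^\ast$ genuinely passes from one side of it to the other. Your hedge --- ``combined, if necessary, with an induction on the number of intersection points'' --- is not a fallback but the entire argument, and it is exactly what the paper does. After the smoothing reduction (which you describe correctly, and which matches the paper), one forms the \emph{unicorn} arc: follow $\alpha$ from $x$ to its first interior meeting with $\beta$, then follow $\beta$ to $y$; a push-off of this arc has strictly fewer interior intersections with each of $\alpha$ and $\beta$, and induction on $|\alpha\cap\beta|$ finishes. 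This yields connectedness but, as the cylinder shows, no bound on distance.

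Your Case~1 argument via a boundary collar is correct and does give diameter $\leqslant 2$ when $x$ and $y$ share a boundary component; this is a pleasant strengthening not isolated in the paper.
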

Note that, when $x$ and $y$ are taken in the same connected component, we are
not requiring that the arcs be nonseparating; this is the reason why we use the
letter $\mathcal{E}$, for {\em extended}, in the same fashion as in~\cite{LMPVY}.
\begin{proof}
  Let $a$, $b$ be two vertices of this graph. As a first case we suppose that
  $a\cap b$ is made of a finite number of transverse intersection points:
  we will prove by induction on the
  cardinal of $a\cap b$ that, in this case, $a$ and $b$ are connected in
  $\mathcal{E}\Afin{}(S,x,y)$. If $a\cap b$ is as small as possible, \textsl{i.e.}, is
  equal to $\lbrace x,y\rbrace$, then $a$ and $b$ are neighbors in this graph.
  Otherwise, if $a$ and $b$ intersect at other points than $x$ and $y$, we
  may, in the spirit of~\cite{HPW},
  pick a {\em unicorn path} $c$, made of one subarc of $a$ beginning at $x$,
  and one subarc of $b$ ending at $y$. (For example, we may follow $a$
  until it first meets $b$ after $x$, and then
  continue along $b$).
  Now we may push $c$ aside while fixing its ends,
  at the appropriate side of $c$, to
  obtain a new arc $c'$
  with both $c'\cap a$ and $c'\cap b$ of cardinal strictly lower than that of
  $a\cap b$.

  This proves that arcs intersecting at finitely many points are
  connected in $\mathcal{E}\Afin{}(S,x,y)$.
  
  Now if the intersection $a\cap b$ is infinite, fix a differentiable structure
  on the surface $S$. We may consider a smooth curve $a'$, neighbor of $a$, by
  pushing $a$ aside while fixing its ends, and similarly, a smooth neighbor
  $b'$ of $b$ similarly. Up to perturbing $b'$, we may suppose that $b'$ is
  transverse to $a'$. 
  By the step above, $a'$ and $b'$ are
  connected in $\mathcal{E}\Afin{}(S,x,y)$ and the Lemma is proved.
\end{proof}
We will also need a version for nonseparating arcs.
\begin{lemma}\label{lem:ArcConnexeBis}
  Let $S$ be a connected surface containing a nonseparating curve. Suppose
  $S$ has boundary, and let $x$ and $y$ be two distinct points in a same boundary
  component of $S$. Let $\NAfin{}(S,x,y)$
  be the set of nonseparating simple arcs
  connecting $x$ to $y$, with an edge when they are disjoint away from $x$
  and $y$. Then $\NAfin{}(S,x,y)$ is connected.
\end{lemma}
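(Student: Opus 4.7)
The plan is to reduce the statement to Lemma~\ref{lem:ArcConnexe}. Given two nonseparating arcs $a,b \in \NAfin{}(S,x,y)$, Lemma~\ref{lem:ArcConnexe} provides a path $a = c_0, c_1, \ldots, c_n = b$ in the extended graph $\mathcal{E}\Afin{}(S,x,y)$, and it remains to modify this path so that every intermediate arc becomes nonseparating while preserving adjacency in $\mathcal{E}\Afin{}(S,x,y)$ (hence in $\NAfin{}(S,x,y)$).

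The key technical step is a \emph{handle-addition} procedure: given an arc $c$ from $x$ to $y$, a nonseparating simple closed curve $\gamma \subset S$ disjoint from $c$, and a short arc $\tau$ joining a point of $c$ to $\gamma$ and meeting $c\cup\gamma$ only at its endpoints, one constructs a new simple arc $c^\gamma$ from $x$ to $y$ which coincides with $c$ outside a thin band around $\tau\cup\gamma$ and performs, inside this band, a single detour around $\gamma$ (with a small variant using a transverse crossing of $\gamma$ when $\gamma$ is one-sided, since its Möbius neighborhood has separating boundary). Because the $\mathbb{Z}/2$-homology class of $c^\gamma$ differs from that of $c$ by a nonzero element in both cases, the arc $c^\gamma$ is nonseparating whenever $c$ is separating. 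Crucially, $c^\gamma$ remains disjoint off endpoints from any arc that is disjoint off endpoints from $c$ and does not meet the band used in the construction.

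The plan is then to replace each intermediate separating $c_i$ by $c_i^{\gamma_i}$, for a nonseparating simple closed curve $\gamma_i$ chosen in $S \smallsetminus (c_{i-1}\cup c_i \cup c_{i+1})$, with the bridge $\tau_i$ also in this complement. The main obstacle is the possible nonexistence of such a $\gamma_i$: the three arcs might fill $S$, so that every nonseparating closed curve in $S$ meets their union. To handle this case, I would first refine the path by inserting auxiliary intermediate nonseparating arcs, obtained by pushing $c_{i-1}$ (or $c_{i+1}$) aside along an annular or Möbius neighborhood, thereby reducing the complexity of each consecutive triple until the complement of the triple contains a nonseparating closed curve. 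Finitely many iterations of this refinement and replacement then turn the original path in $\mathcal{E}\Afin{}(S,x,y)$ into a path in $\NAfin{}(S,x,y)$ joining $a$ to $b$.
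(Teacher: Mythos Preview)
Your handle-addition idea is sound in principle: detouring a separating arc around a disjoint nonseparating closed curve does yield a nonseparating arc, and the adjacency is preserved provided the band avoids the neighbours. But the refinement step meant to handle the obstruction---when no nonseparating closed curve lies in the complement of $c_{i-1}\cup c_i\cup c_{i+1}$---is where the argument breaks down. You propose to insert ``auxiliary intermediate nonseparating arcs, obtained by pushing $c_{i-1}$ (or $c_{i+1}$) aside,'' but pushing a \emph{separating} arc aside produces another separating arc, so this only makes sense when the arc being pushed is already nonseparating; and even then, the pushed copy $c_{i-1}'$ is disjoint from $c_{i-1}$ but need not be disjoint from $c_i$ away from $x,y$ (near the endpoints, $c_i$ may depart from $x$ and from $y$ on opposite sides of $c_{i-1}$, so no one-sided push avoids it). More seriously, you invoke ``reducing the complexity of each consecutive triple'' and assert finitely many iterations suffice, without ever defining this complexity or explaining why your operation decreases it. That is precisely the heart of the difficulty, and without it the proposal remains a sketch.

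For contrast, the paper takes a different route that absorbs this difficulty elsewhere. It first proves an auxiliary result (Lemma~\ref{lem:NonSepFinRelUnBordConnexe}) in which the endpoints of the arcs are allowed to move freely along the boundary component $C$; with this extra freedom, replacing separating arcs by nonseparating ones becomes tractable via a reduction to isotopy classes and an induction on the number of boundary components. Only afterward are the endpoints pinned to $x$ and $y$, by excising small half-disks around them and re-inserting radial segments. If you want to pursue your direct approach, you would need either a concrete complexity (e.g.\ a count of intersections with a fixed nonseparating curve, or the genus of complementary pieces) together with a verified decrease, or a cleaner argument showing that a nonseparating closed curve always exists in the complement of the triple once one of $c_{i-1}$, $c_{i+1}$ is nonseparating---neither of which is supplied.
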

The points $x$ and $y$ add some technicality; let us first prove the following
simpler statement.
\begin{lemma}\label{lem:NonSepFinRelUnBordConnexe}
  Let $S$ be a connected surface containing a nonseparating curve, and with
  at least one boundary component, denoted $C$. Let $\NAfin{}(S,C)$ be the
  graph whose vertices are the nonseparating arcs joining two distinct points
  of $C$, and with an edge between two such vertices whenever they are disjoint.
  Then this graph is connected.
\end{lemma}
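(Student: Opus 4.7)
The plan is to proceed by induction on the transverse intersection number $|a \cap b|$, after a preliminary reduction to the case where two arcs $a, b \in \NAfin{}(S, C)$ intersect transversely in finitely many points. For the reduction I would mimic the smoothing trick from the proof of Lemma~\ref{lem:ArcConnexe}: push $a$ and $b$ slightly off themselves to obtain smooth arcs $a', b'$ transverse to each other. A push-off of an arc with endpoints on $C$ is isotopic to the original (the two cobound a bigon, up to a small endpoint slide along $C$), so nonseparatingness is preserved. Thus $a'$ and $b'$ are neighbors of $a$ and $b$ respectively in $\NAfin{}(S, C)$, and it suffices to connect $a'$ to $b'$.

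For the induction, the base case $|a \cap b| = 0$ is immediate. Assume $|a \cap b| = n \geq 1$. I would let $p$ be the first intersection of $a \cap b$ encountered when traveling along $a$ from one of its endpoints $x_a$, so that the subarc $a_1 \subset a$ from $x_a$ to $p$ meets $b$ only at $p$. Split $b$ at $p$ into two subarcs $b_1, b_2$ with far endpoints $x_b, y_b \in C$, and consider the two \emph{unicorn concatenations} $c_j = a_1 \cup b_j$ ($j=1,2$). After pushing each $c_j$ slightly off the corner at $p$ (and sliding endpoints along $C$ if two coincide) I obtain simple arcs $c'_1, c'_2$ with endpoints on $C$, each disjoint from $b$, with $|a \cap c'_j| \leq |b_j \cap a| - 1 \leq n-1$.

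The main obstacle is to guarantee that at least one of the $c'_j$ is nonseparating, so as to be a genuine vertex of $\NAfin{}(S, C)$. For this I would argue via $\mathbb{Z}/2$-homology. Each $c'_j$ has $\partial c'_j \subset C$ and so represents a class in $H_1(S, \partial S; \mathbb{Z}/2)$. Viewing the arcs as $\mathbb{Z}/2$-chains and noting that $[c'_j] = [c_j]$ (push-offs are isotopic), one computes
\[
[c'_1] + [c'_2] = 2[a_1] + [b_1] + [b_2] = [b_1] + [b_2] = [b] \neq 0,
\]
where the last inequality holds because $b$ is nonseparating. Hence at least one of $[c'_1], [c'_2]$ is nonzero, so the corresponding arc $c'_j$ is nonseparating. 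Calling this arc $c$, it lies in $\NAfin{}(S, C)$, is adjacent to $b$ (they are disjoint), and satisfies $|a \cap c| \leq n - 1$, so by the induction hypothesis $a$ and $c$ are connected in $\NAfin{}(S, C)$. This completes the inductive step and the proof.
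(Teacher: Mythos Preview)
Your argument is correct, and it takes a genuinely different route from the paper's proof. The paper first shows there are no isolated vertices, reduces to the compact case, then uses the Bowden--Hensel--Webb homotopy argument to reduce to the graph $\mathrm{NA}(S,C)$ of \emph{isotopy classes} of nonseparating arcs. It then proves connectedness of $\mathrm{NA}(S,C)$ by first observing that the full essential arc complex $\mathrm{A}(S,C)$ is connected (via unicorns), and then promoting this to $\mathrm{NA}(S,C)$ by an induction on the number of boundary components of~$S$.

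Your approach bypasses both the passage to isotopy classes and the boundary-component induction: you run the unicorn induction directly on the fine graph, and the $\mathbb{Z}/2$-homology identity $[c'_1]+[c'_2]=[b]$ is exactly what guarantees that at least one unicorn is nonseparating. This identity is the relative-arc analogue of Fact~\ref{fact:TroisArcsSep} in the paper (three curves whose classes sum to zero cannot have exactly one nonseparating member), so you are using the same underlying principle but in a sharper way. The payoff is a shorter and more self-contained proof; the paper's approach, on the other hand, makes the connection with the classical (isotopy-class) arc complex explicit. One small point to keep clean in your write-up: make sure the endpoint slides along $C$ are done so that at each stage the two arcs under consideration have all four endpoints distinct---otherwise the base case $|a\cap b|=0$ does not immediately give disjointness, and the choice of ``first intersection point'' can degenerate. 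You already flag this, but it is worth stating once as a standing genericity assumption.
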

This lemma is a variation on~\cite[Corollary~3.2]{LMPVY}; here we additionnaly
require that the arcs end at $C$.
In fact, in~\cite{LMPVY}, Corollary~3.3 is stated for surfaces with
$b>0$ boundary components, but proved only in the case $b=1$, which is the case needed in the proof of their main theorem.
Lemma~\ref{lem:NonSepFinRelUnBordConnexe} may be used to extend this corollary
to any $b>0$.

\begin{proof}
  We begin with the observation that the graph $\NAfin{}(S,C)$ has no
  isolated point. Indeed, if $\gamma$ is a vertex of $\NAfin{}(S,C)$,
  by definition it is nonseparating. So we may consider a simple closed curve
  $u$ with one, transverse intersection point with~$\gamma$. Obviously,
  this curve $u$ is nonseparating; this follows from Fact~\ref{fact:NonSep},
  applied to $u$, and a curve $v$ obtained by concatenation of $\gamma$
  with some arc of $C$. Now we can perform a surgery on $u$,
  and push its intersection point towards one end of $\gamma$ until we hit
  $C$. This constructs an arc
  $\alpha$,
  which is now disjoint from $\gamma$, and which is also nonseparating.

  Next, we claim that we can suppose, without loss of generality, that
  the surface $S$ is compact. Indeed, if $\gamma_1,\gamma_2$ are vertices
  of $\NAfin{}(S,C)$, and if $u$ is a nonseparating curve intersecting $\gamma_1$
  as above, consider the set $K=C\cup\gamma_1\cup\gamma_2\cup u$. This set is
  compact, hence there exists a compact topological subsurface $S'$ of $S$
  containing~$K$.
  This surface $S'$ contains nonseparating curves,
  as it contains $u$ and $\gamma_1\cup C$, which may be used as above to
  find two simple closed curves $u$, $v$ with one, essential intersection.
  Now a path joining $\gamma_1$ to $\gamma_2$ in $S'$ is also a path joining
  $\gamma_1$ to $\gamma_2$ in $S$.
  So, until the end of the proof, $S$ is now supposed to be compact.
  
  Next, observe that if two vertices $\gamma_1,\gamma_2$ of $\NAfin{}(S,C)$ are
  isotopic (\textsl{i.e.}, there exists a continuous map $H\colon[0,1]^2\to S$
  such that $\gamma_1(t)=H(0,t)$ and $\gamma_2(t)=H(1,t)$ for all $t$,
  $H(s,0),H(s,1)\in C$ for all $s$ and the curve $H_s\colon t\to H(s,t)$ is
  injective for all $s$),
  then $\gamma_1$ and $\gamma_2$ are in the same component of $\NCfin{}(S,C)$.
  This argument is borrowed from~\cite{BHW}: for all $s$, the arc
  $H_s$ has at least a neighbor $\alpha_s$ (by the first observation above),
  and the set of $s'$ such that $H_{s'}$ is still a neighbor of $\alpha_s$ is
  open in $[0,1]$. By compactness of $[0,1]$, there exist a finite number of
  arcs $\alpha_1,\ldots,\alpha_n$, and a subdivision $0=t_0<t_1<\cdots<t_n=1$
  such that $\alpha_j$ is disjoint from $H_t$ for all $t\in[t_{j-1},t_j]$ for
  all $j$, and now $(\gamma_1,\alpha_1,\ldots,\alpha_n,\gamma_2)$ is a
  path of $\NCfin{}(S,C)$ joining $\gamma_1$ to $\gamma_2$.
  As a result of this observation, we need only prove the connectedness of
  the graph $\mathrm{NA}(S,C)$, whose vertices are the isotopy classes of
  arcs between two distinct points of $C$, and with an edge between two
  vertices whenever the corresponding classes admit disjoint representatives.
  
  We proceed with the observation that the graph $\mathrm{A}(S,C)$, defined
  exactly as $\mathrm{NA}(S,C)$ excpept we consider essential arcs, which may
  be separating, is connected.
  A simple way to do this is by using the idea of unicorn arcs exactly as
  in the proof of the preceding lemma: if two arcs $a$ and $b$ are in
  minimal position then their unicorn arcs are essential, and have fewer
  intersections with both $a$ and $b$ than the number of points of $a\cap b$.
  
  We will promote the connectedness of $\mathrm{A}(S,C)$ to that of
  $\mathrm{NA}(S,C)$, by induction on the number of boundary components of $S$.
  
  First, suppose that $S$ has only one boundary component,~$C$.
  Let $\gamma_1,\gamma_2$ be two vertices of $\mathrm{NA}(S,C)$.
  We may connect them by a path
  $(\gamma_1,\alpha_1,\alpha_2,\ldots,\alpha_n,\gamma_2)$
  in $\mathrm{A}(S,C)$, where each of the $\alpha_j$ may be separating;
  consider such a path with minimal number of separating arcs.
  For contradiction, and up to some relabeling, suppose $\alpha_1$ is
  separating. Then it cuts $S$ in two components; denote by $S_1$ the
  one containing $\gamma_1$
  and $S_2$ the other. Then $\alpha_2$ is also contained in $S_1$,
  otherwise we may delete $\alpha_1$ from our path. Since $S$ has
  no boundary component other than $C$ and since the curve $\alpha_1$
  is essential, the surface $S_2$ contains
  a nonseparating arc, $\alpha_1'$. This arc
  may be used instead of $\alpha_1$ in our initial path
  from $\gamma_1$ to $\gamma_2$, contradicting the minimality of the
  number of separating arcs. This proves that
  $\mathrm{NA}(S,C)$ is connected if $S$ has no other boundary
  component.
  
  Now, we suppose, for inductive hypothesis, that
  $\mathrm{NA}(S',C')$
  is connected for every surface $S'$ with less boundary components
  than $S$. Let $\gamma_1,\gamma_2$ be two vertices of $\mathrm{NA}(S,C)$.
  As before, consider a path
  $(\gamma_1,\alpha_1,\alpha_2,\ldots,\alpha_n,\gamma_2)$ in
  $\mathrm{A}(S,C)$ between them, with minimal number of separating arcs.
  For contradiction, and up to some relabeling, suppose $\alpha_1$ is
  separating: it cuts $S$ into two subsurfaces, let $S_1$ be the one
  containing $\gamma_1$, and, by hypothesis, must also contain $\alpha_2$,
  and let $S_2$ be the other. If $S_2$ contains nonseparating arcs, we
  conclude as before. If not, then $S_2$ contains some of the boundary
  components of $S$, hence the surface with boundary $S'=S_1\cup\alpha_1$
  has strictly less boundary components than $S$. One is $C'$, composed
  by an arc of $C$ and the arc $\alpha_1$, and there may be others.
  
  If $\alpha_2$ is nonseparating, then, by the induction hypothesis,
  there is a path
  $(\gamma_1,\beta_1,\ldots,\beta_k,\alpha_2)$ of $\mathrm{NA}(S',C')$
  connected them. The arcs $\beta_1, \ldots, \beta_k$ may have end points
  in $\alpha_1$, but we may perform a surgery in order to push all these
  points to $C$, and obtain arcs $\beta_1',\ldots,\beta_k'$ which are
  also vertices of $\mathrm{NA}(S,C)$, and we are done in this case.
  
  Finally, if $\alpha_2$ is a separating arc (of $S$, or of $S_1$,
  equivalently), then we may find an arc $\alpha_2'$ of $S_1$ which
  is nonseparating and disjoint from $\alpha_2$. By following the
  last case above, there exists a path
  $(\gamma_1,\beta_1,\ldots,\beta_k,\alpha_2')$ in $\mathrm{NA}(S,C)$,
  hence the path
  $(\gamma_1,\beta_1,\ldots,\beta_k,\alpha_2',\alpha_2,\ldots,\alpha_n,\gamma_2)$
  of $\mathrm{A}(S,C)$ has one less separating arc than the initial path.
  This contradiction ends the proof.
\end{proof}
\begin{proof}[Proof of Lemma~\ref{lem:ArcConnexeBis}]
  Let $\gamma_1,\gamma_2$ be two vertices of $\NAfin{}(S,x,y)$.
  First, we may construct a neighbor $\gamma_2'$ in $\NAfin{}(S,x,y)$
  of $\gamma_2$, which, in a neighborhood of $x$ (resp. $y$),
  touches $\gamma_1$ only at $x$ (resp. $y$).
  
  Indeed, there is a neighborhood $U_x$ of $x$ homeomorphic to the
  closed half unit disk
  \[ \{z, |z|\leqslant 1 \text{ and }\mathrm{Im}(z)\geqslant 0\}, \]
  where the middle ray ($\mathrm{Re}(z)=0$) corresponds to the points
  of $\gamma_1$. On either side of this ray, we may find an arc disjoint
  from $\gamma_1$ and $\gamma_2$ except at $0$, arbitrarily close to
  the boundary ($\mathrm{Im}(z)=0$), and joining $0$ to the unit circle,
  and then this small arc may be continued to construct a curve $\gamma_2'$
  which consists of pushing $\gamma_2$ aside.
  
  So we may suppose that $\gamma_1$ and $\gamma_2$, close to $x$ and
  $y$, intersect only at these points, and we may now find neighborhoods
  $U_x$ and $U_y$ as above, such that their intersections with
  $\gamma_1$ and $\gamma_2$ are along rays in this disk, in distinct
  directions around $0$. Let $S'$ be the surface obtained by removing
  the interiors of $U_x$ and $U_y$ from $S$. Then the path given by
  applying Lemma~\ref{lem:NonSepFinRelUnBordConnexe} to $S$, yields
  a path from $\gamma_1$ to $\gamma_2$ in $\NAfin{}(S,x,y)$, just by
  adding some rays in $U_x$ and $U_y$ to the corresponding arcs.
\end{proof}

\subsection{Proof of Proposition~\ref{prop:GermeAdjacent}}

Let us go back to the proof of
Proposition~\ref{prop:GermeAdjacent}.
For the remaining of the section we fix a point $x\in\Sigma$.
Let $X$ denote the set of
nonseparating simple closed curves passing through~$x$.
\begin{lemma}\label{lem:SpheresConnexes}
  Let $a,b,c\in X$.
  Suppose that $T(a,b)$ and $T(a,c)$ hold.
  
  Then $(a,b)\adjacent(a,c)$.
\end{lemma}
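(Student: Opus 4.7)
The plan is to build a finite chain of edges
\[ \{a,b\} = \{a,b_0\}, \{a,b_1\}, \ldots, \{a,b_n\} = \{a,c\} \]
in $E_T(\TransFin(\Sigma))$, with each $b_i \in X$ and each consecutive pair $\{a,b_i\}, \{a,b_{i+1}\}$ related by a single elementary step of $\adjacent$, that is, either $\{a,b_i,b_{i+1}\}$ is a $3$-clique of type bouquet, or $xB(a,b_i,b_{i+1})$ holds. The conclusion will then follow from the definition of $\adjacent$.

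First I would cut $\Sigma$ along $a$ to obtain a surface $\Sigma_a$ with boundary, and let $x^+, x^-$ denote the two lifts of $x$ on $\partial \Sigma_a$ (on two distinct boundary components if $a$ is two-sided, on the same one if $a$ is one-sided). Every vertex $b'\in X$ with $T(a,b')$ lifts to a simple arc $\tilde b' \subset \Sigma_a$ from $x^+$ to $x^-$, meeting $\partial \Sigma_a$ only at its endpoints; moreover, since $b'$ is nonseparating in $\Sigma$, the arc $\tilde b'$ is nonseparating in $\Sigma_a$. Conversely, any such arc, after regluing $a$, descends to a vertex of this form.

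Next I would invoke connectedness of the relevant nonseparating arc graph to produce a sequence $\tilde b = \tilde b_0, \tilde b_1, \ldots, \tilde b_n = \tilde c$ of such arcs, with $\tilde b_i \cap \tilde b_{i+1} \subset \{x^+,x^-\}$ for each $i$. When $a$ is one-sided, this is exactly the content of Lemma~\ref{lem:ArcConnexeBis}. When $a$ is two-sided, the endpoints $x^+, x^-$ lie on different boundary components; I would reduce to the previous case by attaching a small band from $x^+$ to $x^-$ along $\partial \Sigma_a$ (which corresponds to remembering a regular neighborhood of $x$ in the original $\Sigma$) and applying the connectedness result in the resulting surface.

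Each $\tilde b_i$ descends to a curve $b_i \in X$ with $T(a,b_i)$, and $b_i \cap b_{i+1} = \{x\}$. It remains to link $\{a,b_i\}$ to $\{a,b_{i+1}\}$ by one step of $\adjacent$. If $b_i$ and $b_{i+1}$ are topologically transverse at $x$, then $\{a, b_i, b_{i+1}\}$ is a bouquet $3$-clique and we are done; so the only real issue is the case when these two curves are tangent at $x$. In that case I would either perform a small perturbation of $b_i$ inside a disk neighborhood of $x$, leaving it unchanged outside, so as to produce an intermediate $b_i'\in X$ transverse at $x$ to both $b_i$ and $b_{i+1}$ and intersecting each of them only at $x$ (splitting the $i$-th step into two bouquet steps); or, in the exceptional Klein bottle situation where no such perturbation exists inside $X$ (compare Remark~\ref{rmk:Klein}), appeal directly to the extra bouquet relation $xB(a,b_i,b_{i+1})$, whose very purpose is to absorb precisely these tangential coincidences of one-sided curves. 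The main obstacle, in my view, is this final transversality step: it forces the dichotomy between the orientable case and the non-orientable one, and explains why the extra bouquets introduced in Section~\ref{ssec:xB} are indispensable.
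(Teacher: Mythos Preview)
Your overall strategy---cut along $a$, use connectedness of an arc graph, then handle the transversality at $x$---matches the paper. But there is a genuine gap in the middle step.

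The claim ``since $b'$ is nonseparating in $\Sigma$, the arc $\tilde b'$ is nonseparating in $\Sigma_a$'' is false in general. When $a$ and $b'$ are both one-sided, the union $a\cup b'$ is locally disconnecting (this is exactly the observation in the proof of Lemma~\ref{lem:drenccc}, case $(1,0,0)$), so $\Sigma\smallsetminus(a\cup b')=\Sigma_a\smallsetminus\tilde b'$ may well be disconnected even though $\Sigma\smallsetminus b'$ is connected. Thus you cannot feed $\tilde b$ and $\tilde c$ into Lemma~\ref{lem:ArcConnexeBis}, and your band-attaching trick in the two-sided case is an unnecessary complication on top of this. The fix is simple and is what the paper does: use Lemma~\ref{lem:ArcConnexe} (the graph $\mathcal{E}\Afin{}(S,p,q)$ of \emph{all} simple arcs, separating or not) instead of Lemma~\ref{lem:ArcConnexeBis}. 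Any simple arc from $x^+$ to $x^-$ in $\Sigma_a$, separating or not, descends to a simple closed curve in $\Sigma$ meeting $a$ exactly once and transversely, hence nonseparating by Fact~\ref{fact:NonSep}. So the intermediate $\gamma_i$'s are automatically vertices of $\TransFin(\Sigma)$, and no nonseparating hypothesis on the arcs is needed.

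A smaller issue: your ``perturb $b_i$ inside a disk, leaving it unchanged outside'' cannot give $b_i'\cap b_i=\{x\}$, since $b_i'$ and $b_i$ would already agree at the two points where $b_i$ crosses the boundary of the disk. The paper handles this step differently: if one of $\gamma_i,\gamma_{i+1}$ is two-sided, it constructs an explicit intermediate curve $\delta$ through $x$ (see Figure~\ref{fig:OnCompleteBouquets}) with $B(a,\gamma_i,\delta)$ and $B(a,\gamma_{i+1},\delta)$; if both are one-sided, it invokes $xB(a,\gamma_i,\gamma_{i+1})$ directly.
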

\begin{proof}
  Let $S$ be the surface obtained by cutting $\Sigma$ along $a$: it is
  the surface with boundary obtained by gluing back two copies of the
  curve $a$ to $\Sigma\smallsetminus a$. The point $x$ of $\Sigma$
  yields two points, $p$ and $q$, of $\partial S$, and the curves
  $b$ and $c$ define two arcs of $S$ joining $p$ and $q$.
  By Lemma~\ref{lem:ArcConnexe}, there exists a finite sequence
  $\gamma_0=b$, \ldots, $\gamma_n=c$, of arcs of $S$ joining $p$ and $q$,
  with $\gamma_i$ and $\gamma_{i+1}$ disjoint except at $p$ and $q$.
  For each $i$, the arc $\gamma_i$ defines a closed curve in $\Sigma$,
  which has precisely one, transverse intersection with $a$; we will
  still denote it by $\gamma_i$, abusively.
  
  For every $i$, if $T(\gamma_i,\gamma_{i+1})$ holds, then we have
  $(a,\gamma_i)\adjacent(a,\gamma_{i+1})$, by definition.
  If $T(\gamma_i,\gamma_{i+1})$ does not hold, then either $\gamma_i$
  or $\gamma_{i+1}$ are both one-sided, or one of them is two-sided.
  In the first case, the condition $xB(a,\gamma_i,\gamma_{i+1})$ holds,
  by definition, and hence
  $(a,\gamma_i)\adjacent(a,\gamma_{i+1})$.
  In the second, up to reversing the notation suppose $\gamma_i$ is
  two-sided. Figure~\ref{fig:OnCompleteBouquets} shows how to insert
  a curve $\delta$ such that
  $B(a,\delta,\gamma_i)$ and $B(a,\delta,\gamma_{i+1})$ both hold,
  and hence we still have $(a,\gamma_i)\adjacent(a,\gamma_{i+1})$ in
  this case.
  
  \begin{figure}[htb]
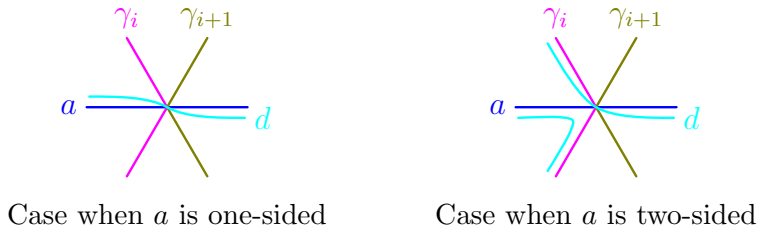

  \begin{asy}
  import geometry;
  
  picture commun, gauche, droite;
  
  path a = (-30,0)--(30,0), gamma1 = 30*dir(120)--(0,0)--30*dir(-120);
  path gamma2 = rotate(180)*gamma1;
  
  path dgauche = (-29, 4){right}::(0,0){dir(-30)}::(29, -4){right};
  path ddroit1 = (30*dir(127)){dir(-60)}::(0,0){dir(-30)}::(29,-4){right};
  path ddroit2 = (-29, -4){right}..tension 3 ..(10*dir(210)){dir(-60)}..
    tension 3 ..(30*dir(-127)){dir(-120)};
  
  draw (commun, a, blue+0.9pt);  label(commun, "$a$", (-30,0), W, blue);
  draw(commun, gamma1, magenta+0.9pt);
  label(commun, "$\gamma_i$", 30*dir(120), N, magenta);
  draw(commun, gamma2, olive+0.9pt);
  label(commun, "$\gamma_{i+1}$", 30*dir(60), N, olive);
  
  draw(gauche, dgauche, cyan+0.9pt);
  draw(droite, ddroit1, cyan+0.9pt);
  draw(droite, ddroit2, cyan+0.9pt);
  label(commun, "$d$", (29, -4), E, cyan);
  
  label(gauche, "{\small Case when $a$ is one-sided}", (0,-40));
  label(droite, "{\small Case when $a$ is two-sided}", (0,-40));
  
  add(commun, (-80,0)); add(commun, (80,0));
  add(gauche, (-80,0)); add(droite, (80,0));
  \end{asy}
  \caption{Connecting the curves by common adjacency}
  \label{fig:OnCompleteBouquets}
  \end{figure}
  
  By transitivity, we deduce that $(a,b)\adjacent(a,c)$.
\end{proof}
The last ingredient for the proof of Proposition~\ref{prop:GermeAdjacent}
is the following observation.
\begin{observation}\label{obs:CCarc}
  Let $a$, $a'$ be two nonseparating simple closed curves in $\Sigma$
  such that $a\cap a'$ is an arc. Then, both sides of this arc lie
  in the same connected component of $\Sigma\smallsetminus(a\cup a')$.
\end{observation}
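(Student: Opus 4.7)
The plan is to argue by contradiction. Write $\alpha$ (resp.\ $\alpha'$) for the closure in $\Sigma$ of $a\setminus\gamma$ (resp.\ $a'\setminus\gamma$), so that $a = \gamma\cup\alpha$, $a' = \gamma\cup\alpha'$, and $G := a\cup a' = \gamma\cup\alpha\cup\alpha'$ is a theta graph with vertices $p,q$ at the endpoints of $\gamma$ and three edges each joining $p$ to $q$. Suppose for contradiction that the two sides of $\gamma$ lie in distinct components of $\Sigma\setminus G$.

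First I would constrain the combinatorial structure of $\Sigma\setminus G$ using the identifications
$$\Sigma\setminus a = (\Sigma\setminus G)\cup{\alpha'}^\circ \quad\text{and}\quad \Sigma\setminus a' = (\Sigma\setminus G)\cup\alpha^\circ.$$
Adding the open arc ${\alpha'}^\circ$ can only merge the (at most two) components of $\Sigma\setminus G$ adjacent to its two sides, so connectedness of $\Sigma\setminus a$ forces $\Sigma\setminus G$ to have at most two components; and if there are exactly two, the two sides of $\alpha'$ must lie one in each. The same conclusion for $\alpha$ follows from connectedness of $\Sigma\setminus a'$. Combined with the contradiction hypothesis for $\gamma$, this shows that $\Sigma\setminus G$ has exactly two components $A,B$, and each of the three arcs contributes one side to $A$ and one to $B$.

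The second step is a parity count at the vertex $p$. The three edges at $p$ divide a small disk around $p$ into three sectors; let $n$ denote the number of them lying in $A$. On one hand, the number of edge-sides at $p$ that are adjacent to an $A$-sector equals $2n$, since each such sector is bordered by two edges and each edge-side at $p$ is adjacent to exactly one sector. On the other hand, the same count equals $3$: for each of the three arcs, the global $A$-side has an endpoint at $p$, locally adjacent to an $A$-sector, contributing one edge-side to the count. The equation $2n=3$ has no integer solution, yielding the contradiction.

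The step requiring most care is the merger analysis of the first paragraph: one must justify that the two local sides of the open arc ${\alpha'}^\circ$ in $\Sigma\setminus a$ each lie in a well-defined (and possibly equal) component of $\Sigma\setminus G$, so that the passage from $\Sigma\setminus G$ to $\Sigma\setminus a$ is exactly a single merge operation. Once this is in place, the parity count at $p$ is essentially immediate.
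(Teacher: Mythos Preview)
Your proof is correct and takes a genuinely different route from the paper's. The paper argues by direct inspection: labeling the (a priori up to four) local regions $A,B,C,D$ around the theta graph, it splits into the cases where the intersection arc is essential or inessential, and in each case follows the curves $a$ and $a'$ around (further splitting according to whether they are one- or two-sided) to see which of $A,B,C,D$ coincide, eventually forcing $A=B$.

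Your argument avoids this case analysis entirely. You first use nonseparation of $a$ and $a'$ to pin down the component structure of $\Sigma\setminus G$ under the contradiction hypothesis (exactly two pieces, with every edge of the theta graph straddling both), and then run a parity count at the trivalent vertex $p$: the three sectors cannot be $2$-colored so that the two sectors adjacent to each edge receive different colors. This is essentially the observation that the ``dual'' triangle at $p$ is not bipartite, packaged as $2n=3$.

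The advantage of your approach is uniformity: it never mentions one- versus two-sidedness of $a$ or $a'$, nor essential versus inessential intersection, and so works verbatim on nonorientable surfaces without extra bookkeeping. The paper's approach, by contrast, is more hands-on and makes the actual identifications among the regions explicit in each case, which is perhaps more informative but longer. Your caveat about the merger step is well placed: the only thing to check is that an open arc in a surface always has a product tubular neighborhood (so exactly two local sides, each connected in $\Sigma\setminus G$), which is immediate since an arc is contractible and hence cannot have a M\"obius neighborhood.
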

\begin{proof}
  \textsl{A priori}, the complement of $\Sigma\smallsetminus(a\cup a')$
  may have up to four connected components, as suggested in 
  Figure~\ref{fig:CCarc}.
  \begin{figure}[htb]
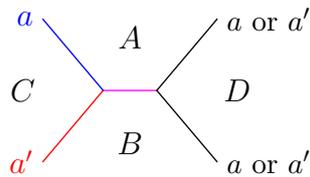

  \begin{asy}
  import geometry;
  
  real R = 35; point Dec = (20,0);
  draw ((0,0)--Dec, magenta);
  draw (shift(Dec)*(R*dir(50)--(0,0)));
  label ("{\small $a$ or $a'$}", Dec+R*dir(50), E);
  draw ((0,0)--(-R*dir(50)), red); label ("$a'$", -R*dir(50), W, red);
  draw (shift(Dec)*(R*dir(-50)--(0,0)));
  label ("{\small $a$ or $a'$}", Dec+R*dir(-50), E);
  draw ((0,0)--(-R*dir(-50)), blue); label ("$a$", -R*dir(-50), W, blue);
  label ("$A$", Dec/2+(0, 20));  label("$B$", Dec/2+(0, -20));
  label ("$C$", (-30, 0));  label ("$D$", Dec+(30, 0));
  \end{asy}
  \caption{The arc $a\cap a'$ cannot disconnect}
  \label{fig:CCarc}
  \end{figure}
  Suppose first that the intersection $a\cap a'$ is essential.
  If $a$ (resp. $a'$) is one-sided, by following the curve $a$ (resp. $a'$)
  we see that $A=B$. If both $a$ and $a'$ are two-sided, by following $a$ we
  see that $A=D$ and $C=B$, while by following $a'$ we get $A=C$ and $B=D$,
  so~$A=B$.
  
  Now, suppose the intersection arc $a\cap a'$ is inessential.
  By following $a$, we see that $C=D$, regardless of $a$ being one or
  two-sided. Thus, if $A\neq B$, then one of $A$
  or $B$, say $A$, is not connected from any of $B, C, D$. But this implies
  that $a$ is separating, a contradiction.
  \end{proof}
  
We are now in a position to prove~Proposition~\ref{prop:GermeAdjacent},
but instead we will
prove the following stronger statement, which will be more
convenient later in this article.
\begin{proposition}\label{prop:SemiGermeAdjacent}
  Let $a$, $b$, $a'$, $b'$ be such that $T(a,b)$ and $T(a',b')$,
  with intersection point
  $x=\Point(a,b)=\Point(a',b')$, and suppose that $a$ and $a'$ locally ``half 
  coincide'' near $x$, \textsl{i.e.}, $a \cap a'$ contains a non degenerate arc 
  with endpoint $x$. Then $\{a,b\}\adjacent\{a',b'\}$.
\end{proposition}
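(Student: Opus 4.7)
The plan is to reduce Proposition~\ref{prop:SemiGermeAdjacent} to Lemma~\ref{lem:SpheresConnexes} by constructing a pivot curve $c \in X$ satisfying both $T(a,c)$ and $T(a',c)$. Once such a $c$ is produced, three applications of Lemma~\ref{lem:SpheresConnexes}---with base curves $a$, $c$, and $a'$ in turn, using that $a, a', b, b', c$ all pass through $x$---yield
\[
\{a,b\} \adjacent \{a,c\} \adjacent \{a',c\} \adjacent \{a',b'\},
\]
and Proposition~\ref{prop:SemiGermeAdjacent} follows by transitivity of~$\adjacent$.

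The bulk of the work is therefore to construct $c$. Let $\alpha \subset a \cap a'$ be a maximal non-degenerate arc with endpoint $x$. By the change of coordinates principle, in a small disk $D$ around $x$ we may represent $a$ and $a'$ as piecewise-linear arcs sharing the ``east'' ray (the initial portion of $\alpha$) and having second branches at $x$ in directions $\beta_a$ and $\beta_{a'}$; the degenerate case $\beta_a = \beta_{a'}$ is handled similarly. The three rays from $x$ divide $D$ into three sectors, and a direct inspection of the transversality conditions forces $c$'s two branches at $x$ to lie in the two sectors flanking the east ray, which we call $I_1$ and $I_3$. We pick a short embedded arc $\delta \subset D$ through $x$ with these branch directions, meeting $a \cup a'$ only at $x$.

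It then remains to close $\delta$ into a nonseparating simple closed curve $c$ meeting $a \cup a'$ only at $x$. Concretely, one joins the two endpoints of $\delta$ by an arc in $\Sigma \smallsetminus (a \cup a')$, and, if the resulting curve is separating, modifies it by an elementary surgery with a nonseparating curve disjoint from $a \cup a'$ (which exists by Fact~\ref{fact:NonSep}). The step I expect to be the most delicate is the first one: the two endpoints of $\delta$ lie on opposite local sides of $\alpha$, so one must verify that the two sides of $\alpha$ lie in the same connected component of $\Sigma \smallsetminus (a \cup a')$. This is the generalization of Observation~\ref{obs:CCarc} from the case where $a \cap a'$ is exactly an arc to the case where $a \cap a'$ merely contains an arc; I would establish it by the same strategy---following $a$ or $a'$ from a point just off $\alpha$ near $x$, splitting on one- versus two-sidedness of the curves involved---but now with additional care to route around the extra intersection points of $a \cap a'$ outside $\alpha$. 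Once this connectedness is in place, the construction of $c$ is complete and the chain of adjacencies above closes the argument.
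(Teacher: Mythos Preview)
Your overall plan---produce a pivot curve $c$ with $T(a,c)$ and $T(a',c)$, then chain three applications of Lemma~\ref{lem:SpheresConnexes}---matches the paper's first step exactly. One small remark: once such a $c$ exists it is automatically nonseparating by Fact~\ref{fact:NonSep} (the curve $a$ meets it once transversely), so your surgery-with-a-nonseparating-curve step is unnecessary; incidentally, Fact~\ref{fact:NonSep} does not assert the existence of a nonseparating curve disjoint from $a\cup a'$, so your appeal to it there is misplaced---but this is moot.

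The real divergence is in how to obtain $c$ when $a\cap a'$ strictly contains the arc $\alpha$. The paper does \emph{not} attempt to build $c$ directly in this generality. It first treats the special case where $a$ and $a'$ meet \emph{only} along an arc through $x$---there Observation~\ref{obs:CCarc} applies verbatim and gives the pivot curve---and then reduces the general case to a finite chain of such special cases: one chooses a small closed disk $V$ with $a\cap V = a'\cap V$ an arc from $x$ to some $y$, and applies Lemma~\ref{lem:ArcConnexeBis} in $\Sigma\smallsetminus\mathring V$ to interpolate $a=a_0,a_1,\ldots,a_n=a'$ with each consecutive pair meeting only along that arc.

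Your direct route instead requires that the two local sides of $\alpha$ lie in the same component of $\Sigma\smallsetminus(a\cup a')$ even when $a$ and $a'$ have further intersections. You propose to prove this ``by the same strategy'' as Observation~\ref{obs:CCarc}, following $a$ or $a'$ and ``routing around'' the extra intersection points. But the proof of Observation~\ref{obs:CCarc} works precisely because there are no such points: one travels along $a$ (or $a'$) and remains in the complement of the other curve throughout. With additional crossings, travelling along $a$ forces you through $a'$, and any detour leaves the curve and may land you in a different component of $\Sigma\smallsetminus(a\cup a')$; ``additional care'' is not a proof. You have not established this generalized connectivity claim, and it is not even clear it holds for arbitrary nonseparating $a,a'$ sharing an arc. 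This is the genuine gap in your proposal, and it is exactly what the paper's interpolation via Lemma~\ref{lem:ArcConnexeBis} is designed to sidestep.
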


\begin{proof}[Proof of Proposition~\ref{prop:SemiGermeAdjacent}]
  Suppose first that $a$ and $a'$ coincide along some arc with $x$ as 
  an end-point, and are
  disjoint apart from this arc. By observation~\ref{obs:CCarc}, there exists
  a curve $d$ passing through $x$ such that $T(a,d)$ and $T(a',d)$.
  By Lemma~\ref{lem:SpheresConnexes}, this implies $(a,d)\adjacent(a',d)$,
  and by the same lemma we also have $(a,b)\adjacent(a,d)$ and
  $(a',b')\adjacent(a',d)$. Hence $(a,b)\adjacent(a',b')$.
  
  Now we do not make the assumption any more that $a$ and $a'$ meet
  only along an arc. Still, thanks to the hypothesis of the proposition, 
  we may choose a set $V$ homeomorphic to a closed disk, with $x$ on its 
  boundary, and such that $a\cap V=a'\cap V$ is an arc whose endpoints are $x$ 
  and some other point $y$.
  Lemma~\ref{lem:ArcConnexeBis}, applied to the surface
  $\Sigma\smallsetminus \ring V$, 
  provides a
  sequence $a_0=a$, \ldots, $a_n=a'$, of nonseparating curves such that
  for all $i$, the curves $a_i$ and $a_{i+1}$ intersect only along
  the arc $a\cap V$, hence we may conclude by applying iteratively
  the reasoning above.
\end{proof}

\section{Local subgraphs}\label{sec:LocalSubgraphs}

In section~\ref{sec:Adjacency}, we considered
edges
$\{a,b\}, \{a',b'\}$ in the graph $\TransFin(\Sigma)$
satisfying $|a\cap b|=|a'\cap b'|=1$, and $\point(a,b) = \point(a',b')$.
We defined and used the equivalence relation $\adjacent$.
The aim of this section is to provide a geometric interpretation of the
equivalence classes. The results here are not used anywhere else in the paper.
In particular, this section is not used in the proof of our main results.
Nevertheless, we think it may help the reader to get a clear picture of
the situation.

\subsection{The graph of germs}
Let $x$ be a marked point in the surface $\Sigma$. In this section the we will study the local geometry of curves near $x$, so we may assume that $(\Sigma,x) = (\R^2, 0)$ whenever this is convenient. Given two simple arcs $a, a': [0,1] \to \Sigma$ with $a(0)=a'(0)=x$, we say that $a$ and $a'$ \emph{locally coincide} at $x$ if there exists a neighborhood $V$ of $x$ such that $a([0,1]) \cap V = a'([0,1]) \cap V$. This is an equivalence relation, whose equivalence classes are called \emph{germs of simple arcs at $x$}. The germ of $a$ is denoted $[a]_x$.
We say that $a$ and $a'$ \emph{locally intersect only at $x$} if there exists a neighborhood $V$ of $x$ such that 
$a([0,1]) \cap a'([0,1]) \cap V = \{x\}$. This second relation obviously induces a relation on germs.
Let us consider the graph $\cA(x)$ whose vertices are the germs of simple arcs at $x$, with an edge between the germs of $a$ and $a'$ whenever $a$ and $a'$ locally intersect only at $x$. 

This graph is \emph{not} connected, in fact it has infinitely (uncountably) many connected components, as we will see below. We postpone the description of the connected components to explain the relation with the adjacency relation defined in section~\ref{sec:Adjacency}.
We say that two vertices $\alpha, \alpha'$ of the graph $\cA(x)$ are \emph{comparable} if they belong to the same connected component of the graph.

\subsection{Germs and adjacency}

Given a point $x$ in $\Sigma$ and a simple closed curve $a$ in $\Sigma$ that contains $x$, we choose any one of the two germs of simple arc at $x$ included in $a$ and denote it by $\lfloor a \rfloor_{x}$. Which one of the two germs is chosen will not matter in what follows.

\begin{proposition}
\label{pro:AdjacencyGerms}
Let $a, b, a', b'$ be vertices in  $\TransFin(\Sigma)$ such that $T(a,b) $ and $T(a', b')$ hold, and assume 
$\Point(a,b) = \Point(a',b')$. Then 
$\{a,b\} \adjacent \{a',b'\}$  if and only if the germs $\lfloor a\rfloor_x$ and $\lfloor a'\rfloor_x$ are comparable.
\end{proposition}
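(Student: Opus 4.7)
The forward direction will be a straightforward induction along a chain of elementary $\adjacent$-steps. By transitivity it suffices to treat a single generator $\{c,d\} \adjacent \{c,e\}$, coming either from a bouquet $(c,d,e)$ at $x$ or from an $xB$-configuration. In both cases the three curves $c, d, e$ pairwise meet only at $x$ in a small enough neighborhood of $x$: for a bouquet this is the very definition, and for $xB$ this follows from $T(c,d)$, $T(c,e)$, and $I(d,e)$, where the last property forces $d$ and $e$ to have a single inessential intersection point, necessarily located at $x$. Picking any arc-germ at $x$ on each of $c, d, e$, these three germs therefore pairwise satisfy the edge relation of $\cA(x)$, so they lie in a common connected component of $\cA(x)$. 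Propagating this along the chain shows that all germs of curves appearing in the chain (in particular both germs of $a$ and both germs of $a'$ at $x$) are comparable; this also accounts for the observation that the choice of $\lfloor a\rfloor_x$ and $\lfloor a'\rfloor_x$ is immaterial.

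For the converse direction, the plan is to lift a path $\alpha_0 = \lfloor a\rfloor_x, \alpha_1, \ldots, \alpha_n = \lfloor a'\rfloor_x$ in $\cA(x)$ to a chain of $\adjacent$-relations from $\{a,b\}$ to $\{a',b'\}$. I will extend each germ $\alpha_i$ to a simple closed nonseparating curve $A_i \subset \Sigma$ passing through $x$ with $\lfloor A_i \rfloor_x = \alpha_i$, prescribing $A_0 = a$ and $A_n = a'$. Using the change of coordinates principle and Fact~\ref{fact:NonSep} in the complement of a small neighborhood $V_i$ of $x$, I arrange that consecutive curves $A_i$ and $A_{i+1}$ have their global intersection contained in $V_i$, where by choice of local representatives it reduces to $\{x\}$. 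For each $i$ I then produce a nonseparating curve $D_i$ through $x$ such that $(A_i, A_{i+1}, D_i)$ is either a bouquet at $x$ (when $A_i$ and $A_{i+1}$ meet transversely there) or an $xB$-configuration (in the tangential case).

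The desired $\adjacent$-chain is then assembled as
\[
\{a,b\} \adjacent \{A_0, D_0\} \adjacent \{A_1, D_0\} \adjacent \{A_1, D_1\} \adjacent \cdots \adjacent \{A_n, D_{n-1}\} \adjacent \{a',b'\},
\]
where the ``pivot'' transitions $\{A_i, D_{i-1}\} \adjacent \{A_i, D_i\}$ (including the first and last steps involving $b$ and $b'$) are supplied by Lemma~\ref{lem:SpheresConnexes}, while the horizontal transitions $\{A_i, D_i\} \adjacent \{A_{i+1}, D_i\}$ are elementary bouquet/$xB$ generators.

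The principal obstacle is to handle the case in which consecutive germs $\alpha_i$ and $\alpha_{i+1}$ meet tangentially at $x$: then the extensions $A_i, A_{i+1}$ do not satisfy $T(A_i, A_{i+1})$, and the bouquet relation is not directly available. When both extensions can be taken one-sided the $xB$-generator resolves the difficulty; otherwise I plan to refine the path in $\cA(x)$ by inserting between $\alpha_i$ and $\alpha_{i+1}$ a germ $\gamma$ that crosses both transversely at $x$ and whose representatives locally meet $\alpha_i$ and $\alpha_{i+1}$ only at $x$, which reduces everything to the transverse case. A secondary technical point is to ensure that the extensions $A_i$ and $D_i$ remain globally simple and do not introduce spurious intersections outside $V_i$; this is handled by applying the change of coordinates principle in the complement of a small neighborhood of $x$, together with the abundance of nonseparating simple closed curves provided by Fact~\ref{fact:NonSep}.
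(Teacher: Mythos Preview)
Your forward direction is fine and matches the paper's argument.

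The converse direction has a genuine gap, and it is not the obstacle you flagged. You claim that you can extend the germs to curves $A_i$ with $A_0=a$, $A_n=a'$, and arrange that $A_i\cap A_{i+1}=\{x\}$. But for the very first step this need not be possible: the hypothesis only gives that $\alpha_1$ is a neighbour of $\alpha_0=\lfloor a\rfloor_x$ in $\cA(x)$, i.e.\ locally disjoint from \emph{one} half of $a$. There is no reason $\alpha_1$ should be locally disjoint from the \emph{other} germ $\alpha_0^\ast$ of $a$; in fact $\alpha_1$ may oscillate across $\alpha_0^\ast$ infinitely often near $x$ (in the strip picture of the universal cover of the punctured disk, a lift of $\alpha_1$ stays between two consecutive lifts of $\alpha_0$, but can cross the lift of $\alpha_0^\ast$ lying in that strip infinitely many times). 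In that situation every extension $A_1$ of $\alpha_1$ meets $a$ in infinitely many points near $x$, so you cannot get $T(a,A_1)$ and the bouquet step $\{a,D_0\}\adjacent\{A_1,D_0\}$ is unavailable. Your proposed refinement ``insert a germ $\gamma$ transverse to both'' addresses the tangential/transverse dichotomy, but does nothing for this problem; you would still need the inserted germ to avoid both halves of the fixed curve $a$, which is the same difficulty.

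The paper sidesteps this entirely. Instead of trying to make $A_i$ and $A_{i+1}$ meet only at $x$, it builds an auxiliary curve $c$ whose two germs at $x$ are precisely $\alpha_i$ and $\alpha_{i+1}$. Then $c$ half-coincides with $a_i$ (through $\alpha_i$) and with $a_{i+1}$ (through $\alpha_{i+1}$), and two applications of Proposition~\ref{prop:SemiGermeAdjacent} give $\{a_i,b_i\}\adjacent\{c,d\}\adjacent\{a_{i+1},b_{i+1}\}$. No control whatsoever on $a_i\cap a_{i+1}$ is needed, so the endpoint issue disappears. If you want to repair your argument, this is the move to make.
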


Proposition~\ref{pro:AdjacencyGerms} will be proved in section~\ref{sec:ProofAdjacencyGerms} below. The aim of the next two sections is to provide a simple characterization of distance, and connected components, in the graph of germs; see Proposition~\ref{Prop:DistanceAndWidth} below.

\subsection{Distance in local subgraphs}

In this section, we give a geometrical interpretation of the distance in three different graphs, which are very much like the graph of germs $\cA(x)$.

Let $\Sigma$ be one of the following surfaces: (1) the compact annulus $\bbS^1 \times [0,1]$, (2) the open annulus $\bbS^1 \times \R$, or (3) the 2-torus $\T^2 =\bbS^1 \times \bbS^1$. 
We consider non-oriented simple arcs in $\Sigma$, more precisely simple curves connecting both sides of the annulus in case (1), properly embedded images of the real line connecting both ends of the open annulus in case (2), or simple closed curves in a fixed homotopy class, say homotopic to $\{0\}\times \bbS^1$ in case (3).
Let $\cA$ denote the graph whose vertices are one of the three above family of curves, with an edge between two curves whenever they are disjoint.

In order to express geometrically the distance in $\cA$, let us consider the cyclic cover $p : \widetilde \Sigma \to \Sigma$, respectively in case (1), (2), (3)
$$
p:\R \times [0,1] \to \R /\Z \times [0,1],
\ \ \ \ 
p:\R \times \R \to \R /\Z \times \R,
\ \ \ \ 
p:\R \times \bbS^1 \to \R /\Z \times \bbS^1
$$
given by the formula $p(x,y) = (x \text{ mod } 1, y)$.
Let $T$ be the deck transformation $(x, y) \to (x+1, y)$.

Now consider two curves $a,b$ which are vertices of the graph $\cA$.
Let $\widetilde a, \widetilde b$ be respective lifts of $a,b$ under the covering map $p$.
Note that the set 
$$
\{k \in \Z, \ \ T^k(\widetilde{a}) \cap \widetilde{b} \neq \emptyset \}
$$ 
is an interval of $\Z$, which is finite in the compact cases (1) and (3) but may be infinite in the open annulus case (2).
We define the \emph{relative width} $\width(a,b)$ as the cardinal of this set. This is an element of $\{0, 1, \dots, +\infty \}$.
The reader may check easily that $\width(a,b) = \width(b,a)$.

\begin{proposition}
  For every vertices $a \neq b$ of the graph $\cA$, the distance in the graph
  is given by
  $$
  d(a,b) = \width(a,b) + 1.
  $$
  In cases (1) and (3), the graph $\cA$ is connected.
  In case (2), $a$ and $b$ are in the same connected component of $\cA$ if
  and only if $\width(a,b) < +\infty$. 
\end{proposition}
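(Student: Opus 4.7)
The plan is to pass to the cyclic cover $p \colon \widetilde{\Sigma} \to \Sigma$ and translate both bounds on $d(a,b)$ into combinatorial facts about $T$-invariant families of disjoint lifts in $\widetilde{\Sigma}$. The preliminary facts I will use throughout are: for any vertex $c$ of $\cA$, the preimage $p^{-1}(c)$ is a disjoint union $\bigsqcup_{k\in\Z} T^k(\widetilde{c})$ of pairwise disjoint arcs (or closed curves, in case (3)); and two vertices $c, c'$ of $\cA$ are disjoint in $\Sigma$ if and only if $p^{-1}(c)$ and $p^{-1}(c')$ are disjoint in $\widetilde{\Sigma}$.

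For the upper bound $d(a,b) \leq \width(a,b) + 1$, fix lifts $\widetilde{a}$ and $\widetilde{b}$ realizing the width $w$, so that the set of $k \in \Z$ with $T^k(\widetilde{a}) \cap \widetilde{b} \neq \emptyset$ is exactly $\{0,1,\ldots,w-1\}$ (after translation). When $w=0$, $a$ and $b$ are already disjoint and $d(a,b)=1$; otherwise, for each $i = 1,\ldots,w$ I construct an intermediate vertex $c_i$ by specifying a lift $\widetilde{c}_i \subset \widetilde{\Sigma}$ lying in the strip bounded by $T^{i-1}(\widetilde{a})$ and $T^i(\widetilde{a})$ and running from one side of $\widetilde{\Sigma}$ to the other (respectively a closed curve isotopic to $\widetilde{b}$ in case (3)). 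By choosing the $\widetilde{c}_i$ as near-parallel arcs inside each strip, and arranging consecutive $\widetilde{c}_i$ and $\widetilde{c}_{i+1}$, together with all $T$-translates of each, to remain disjoint, the projections form a path $a = c_0, c_1, \ldots, c_w, c_{w+1} = b$ in $\cA$ of length $w+1$. Disjointness of $c_i$ from $a$ is automatic since $\widetilde{c}_i$ sits in a component of $\widetilde{\Sigma} \setminus p^{-1}(a)$; disjointness of $c_w$ from $b$ is obtained by placing $\widetilde{c}_w$ past the last intersection of $\widetilde{b}$ with $T^{w-1}(\widetilde{a})$.

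For the lower bound $d(a,b) \geq \width(a,b) + 1$, given a path $a = c_0, c_1, \ldots, c_n = b$ in $\cA$, inductively choose lifts $\widetilde{c}_i$ with $\widetilde{c}_0 = \widetilde{a}$ and consecutive $\widetilde{c}_{i-1}, \widetilde{c}_i$ disjoint in $\widetilde{\Sigma}$, which is possible because $c_{i-1} \cap c_i = \emptyset$ in $\Sigma$. The terminal arc $\widetilde{c}_n$ equals $T^{k_n}(\widetilde{b})$ for some $k_n \in \Z$. The key step is to show the set $S = \{k \in \Z : T^k(\widetilde{a}) \cap \widetilde{b} \neq \emptyset\}$ is contained in an integer interval of length at most $n-1$, so that $w = |S| \leq n-1$ and hence $n \geq w+1$. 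For this one uses that the chain $\widetilde{c}_0, \widetilde{c}_1, \ldots, \widetilde{c}_n$ of consecutive-disjoint arcs separates $\widetilde{\Sigma}$ in a controlled way, and that passing from $\widetilde{c}_i$ to $\widetilde{c}_{i+1}$ shifts the relative strip index by at most one. Once the distance formula is proved, the connected-components statement follows immediately: $d(a,b) < \infty$ iff $\width(a,b) < \infty$; in cases (1) and (3), compactness of $\widetilde{b}$ (or of a fundamental domain for $T$) forces $\widetilde{b}$ to meet only finitely many $T^k(\widetilde{a})$, making $\cA$ connected, while in case (2) the open annulus permits infinite widths.

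The main obstacle is making the chain/strip-shift bound in the lower bound rigorous. In cases (1) and (2) disjoint arcs from side to side of $\widetilde{\Sigma}$ inherit a natural linear ordering, but the position of $\widetilde{c}_i$ need not evolve monotonically in $i$, so one must carefully book-keep, at each step, how the relative positions of the remaining translates of $\widetilde{a}$ and $\widetilde{b}$ change. Case (3) can be reduced to case (1) by cutting the torus along a transverse fiber, and case (2) requires an additional argument that the inductively chosen $\widetilde{c}_i$ do not escape to infinity when $\width(a,b)$ is finite.
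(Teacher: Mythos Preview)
Your upper bound construction has a genuine gap when $w\geq 2$. You place $\widetilde{c}_w$ in the strip between $T^{w-1}(\widetilde{a})$ and $T^w(\widetilde{a})$ and claim that $c_w$ can be made disjoint from $b$ by pushing $\widetilde{c}_w$ ``past the last intersection of $\widetilde{b}$ with $T^{w-1}(\widetilde{a})$''. But disjointness of $c_w$ and $b$ in $\Sigma$ requires $\widetilde{c}_w$ to miss \emph{every} translate $T^j(\widetilde{b})$, not just $\widetilde{b}$ itself. For each $j\in\{1,\ldots,w-1\}$ the curve $T^j(\widetilde{b})$ meets both $T^{w-1}(\widetilde{a})$ and $T^w(\widetilde{a})$, hence contains an arc running across your strip from one vertical boundary to the other; any arc $\widetilde{c}_w$ connecting the two horizontal sides of the strip is forced to intersect it. So for $w\geq 2$ no such $c_w$ exists and your path breaks at the last edge. (Note that if this step worked, it would already produce, for $w=2$, a curve disjoint from both $a$ and $b$, hence $d(a,b)\leq 2$, contradicting the very formula you are proving.)

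The paper avoids this by arguing inductively rather than building the whole path at once. It proves two one-step facts: (i) every neighbour $a'$ of $a$ satisfies $\width(a',b)\geq w-1$, and (ii) some neighbour $a'$ of $a$ satisfies $\width(a',b)\leq w-1$. Your lower-bound sketch is essentially (i) and is fine once made precise. The real content is (ii), and its proof uses exactly the ingredient missing from your argument: working in the single strip $S$ between $\widetilde{a}$ and $T(\widetilde{a})$, one observes that $\widetilde{b}\cap S$ sits in a union $S^+$ of bigons along $T(\widetilde{a})$, while $T^{-w}(\widetilde{b})\cap S$ sits in a union $S^-$ of bigons along $\widetilde{a}$, and $S^+\cap S^-=\emptyset$ because $b$ is simple. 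A curve $\widetilde{a'}$ separating $S^-$ from $S^+$ then has $\widetilde{b}$ entirely on one side and $T^{-w}(\widetilde{b})$ entirely on the other, so at most the $w-1$ intermediate translates can meet it. This separation-by-bigons argument is the key idea; your direct construction cannot be completed without it (or something equivalent).
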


\begin{proof}	
Let $a,b$ be as in the statement, and denote $w = \width(a, b)$. We first assume that $w < +\infty$.
By Schoenflies' theorem (in case (2), applied in the two-point compactification of the annulus, which is a sphere),
we may assume that $a$ is a vertical curve whenever this makes our life easier.
We first note that if $w=0$ then $a$ and $b$ admit lifts that are disjoint from every $T$-translate of each other, which shows that $a$ and $b$ are disjoint, and thus $d(a,b)=1$.
Let us now assume $w>0$, and prove the two following key properties.

\begin{enumerate}
\item[(i)] For every vertex $a'$ of $\cA$ such that $d(a,a')=1$, 
$$\width(a',b) \geq \omega-1.$$
\item[(ii)] There exists a vertex $a'$ of $\cA$ such that $d(a,a')=1$ and
$$\width(a',b) \leq w-1.$$
\end{enumerate}

To prove the first property, consider $a'$ such that $d(a,a')=1$.
By definition of the width $w$, we may find lifts $\widetilde{a}, \widetilde{b}$ of $a,b$ such that $\widetilde b$ is disjoint from $\widetilde{a}, T^{w+1}\widetilde{a}$ but meets 
$T(\widetilde{a}), \dots, T^w(\widetilde{a})$. Since $a$ and $a'$ are disjoint, there is a lift $\widetilde{a'}$ of $a'$ which is between $\widetilde{a}$ and $T(\widetilde{a})$. Then the curves
$$
T(\widetilde{a'}), \dots, T^{w-1}(\widetilde{a'})
$$
are between the two curves $T(\widetilde{a})$ and $T^w(\widetilde{a})$, and those two curves are not in the same connected component of $\widetilde{\Sigma} \setminus T^i(\widetilde{a'})$, for $i=1, \dots, w-1$. Since the curve $\widetilde b$ is connected and meets the two curves 
$T(\widetilde{a})$ and $T^w(\widetilde{a})$, it must meet all the $T^i(\widetilde{a'})$.
This proves that $\width(a', b) \geq w-1$.

Let us prove the second property. We consider $\widetilde{a}, \widetilde{b}$ as above. Let $S$ denote the compact strip or annulus bounded by $\widetilde a \cup T(\widetilde{a})$.
Remember that $\widetilde{b}$ meets $T(\widetilde{a})$ but not $\widetilde{a}$. Thus $\widetilde{b} \cap S$ is included in a (maybe infinite) family of \emph{bigons}, \textsl{i.e.}, topological disks bounded by a simple closed curve made of a segment of the curve $T(\widetilde{a})$ and a segment of the curve $\widetilde{b}$. Let $S^+$ denote the union of these bigons.
Symmetrically, the curve $\widetilde{b'} := T^{-w}(\widetilde{b})$ meets $\widetilde{a}$ but not $T(\widetilde a)$. Thus $T^{-w}(\widetilde{b}) \cap S$ is included in a union $S^-$ of bigons formed by the curves $\widetilde{a}$ and $\widetilde{b'}$.
A key point is that the sets $S^-$ and $S^+$ are disjoint, because the curves $\widetilde b$ and $\widetilde b'$ are disjoint, since $b$ is simple. Thus we may construct a homeomorphism $H$ supported in $S$ such that $H(S^-)$ is included in an arbitrarily small neighborhood of $\widetilde a$, and $H(S^+)$ is included in an arbitrarily small neighborhood of $T(\widetilde{a})$. In particular, we may find a curve $\widetilde{a'}$, which is a lift of some element $a'$ of $\cA$, included in the interior of $S$ and disjoint from both $S^-$ and $S^+$ (to be more explicit, take $\widetilde a' = H^{-1}(\{1/2\} \times [0,1])$ in the annulus case, in coordinates for which $a$ is the vertical curve $\{0\} \times [0,1]$). Note that $\widetilde{a'}$ is disjoint from $\widetilde{b}$ and $T^{-w}(\widetilde{b})$, and separate both curves, \textsl{i.e.}, the first one is on the right-hand side of $\widetilde{a'}$, and the second one is on the left-hand side. Thus the set
$$
\{k \in \Z, \ \ T^k(\widetilde{b}) \cap \widetilde{a'} \neq \emptyset \}
$$ 
has cardinality at most $w-1$. Which proves that $\width(a', b) \leq w-1$, as wanted.


Using (i) and (ii), an induction on $n$ shows that $d(a,b)=n$ if and only if
$\width(a,b)+1=n$, which completes the proof in the case when $\width(a,b)$ is finite. When $\width(a,b) = +\infty$,
 an argument analogous to property (i) above shows that $\width(a',b)=+\infty$ for every $a'$ such that $d(a,a')=1$. This shows that $a$ and $b$ are not in the same connected component of the graph. This completes the proof of the proposition.
\end{proof}

\subsection{Distance in the graph of germs}

Let us go back to the graph of germs $\cA(x)$. Assume $(\Sigma,x) = (\R^2, 0)$. Given two vertices $a,b$ of $\cA(x)$, we define their \emph{local relative width} $\width(a,b)$ as follows. The plane minus the origin is identified with the open annulus $\bbS^1 \times \R$, and we consider the graph $\cA$ from the previous section in the open annulus case. Then $\width(a,b)$ is defined as the infimum of the quantity $\width(A,B)$, where $A$ and $B$ are vertices of $\cA$ whose germs respectively equal $a$ and $b$. Here is a more practical definition, which is easily seen to be equivalent. Consider the universal cover $p: \widetilde \Sigma \to \Sigma$ as above. Abuse the definition by still denoting $a,b:[0,1] \to \Sigma$ two curves with $a(0)=b(0)=0$ whose germs respectively equal $a,b$. Let $\widetilde a, \widetilde b$ denote lifts of (the restrictions to $(0,1]$ of) $a,b$ in $\widetilde \Sigma$.
Then the number $\width(a,b)=w$ is characterized by the two following properties:

\begin{itemize}
\item[(i)] for every $t_0 \in (0,1]$, the restriction of $\widetilde a$ to $(0, t_0]$ meets at least $w$ integer translates of $\widetilde b$;

\item[(ii)] there exists $t_0 \in (0,1]$ such that the restriction of $\widetilde a$ to $(0, t_0]$ meets exactly $w$ integer translates of $\widetilde b$;
\end{itemize}

Analogously to the previous section, the distance in the  graph of germs is characterized by the local relative width.
\begin{proposition}\label{Prop:DistanceAndWidth}
Let $a \neq b$ be two vertices of the graph $\cA(x)$. Then $a$ and $b$ are in the same connected component of $\cA(x)$ if and only if $\width(a,b) < +\infty$. 
In this case, the distance in the graph is given by
$$
d(a,b) = \width(a,b) + 1.
$$
\end{proposition}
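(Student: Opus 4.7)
My plan is to reduce Proposition~\ref{Prop:DistanceAndWidth} to the analogous distance formula for the annulus graph $\cA$ in case~(2) of the previous proposition, by translating between germs at $x$ and their proper arc representatives in the open annulus $\Sigma\setminus\{x\}\cong\bbS^1\times\R$. As in the setup of the section, I identify $(\Sigma,x)=(\R^2,0)$ throughout.

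\emph{Upper bound.} Suppose $w:=\width(a,b)<\infty$. Since $\width(a,b)$ is an integer-valued infimum bounded below by $0$, it is attained by a pair of proper arcs $\hat A,\hat B$ in $\Sigma\setminus\{x\}$ with $[\hat A]_x=a$, $[\hat B]_x=b$, and $\width(\hat A,\hat B)=w$: take small initial segments of $a$ and $b$ realizing the local width $w$, then extend them out to infinity radially in two distinct angular directions so that no new intersections are created. The previous proposition yields a path $\hat A=A_0,A_1,\dots,A_{w+1}=\hat B$ of length $w+1$ in $\cA$; passing to germs at $x$ gives a path of the same length from $a$ to $b$ in $\cA(x)$, whence $d(a,b)\leq w+1$.

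\emph{Lower bound and infinite case.} Conversely, suppose there is a path $a=\alpha_0,\alpha_1,\dots,\alpha_m=b$ in $\cA(x)$. I construct proper arc representatives $\hat A_0,\dots,\hat A_m$ in $\Sigma\setminus\{x\}$ with $[\hat A_i]_x=\alpha_i$ and $\hat A_i\cap\hat A_{i+1}=\emptyset$ for each $i$. Fix a small disk $D$ around $x$ on which every consecutive adjacency is witnessed locally, i.e., representatives of $\alpha_i$ and $\alpha_{i+1}$ meet inside $D$ only at $x$. Truncate each $\alpha_i$ to an arc $A_i$ from $x$ to a point $p_i\in\partial D$, arranging the $p_i$ to be pairwise distinct. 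Then extend each $A_i$ beyond $D$ by a radial arc from $p_i$ out to infinity; radial rays from distinct boundary points are pairwise disjoint, so $\hat A_i\cap\hat A_{i+1}=A_i\cap A_{i+1}=\emptyset$ in $\Sigma\setminus\{x\}$. The $\hat A_i$'s form a path of length $m$ in $\cA$; the previous proposition then gives $\width(\hat A_0,\hat A_m)\leq m-1$, and since $\width(a,b)$ is defined as the infimum of such widths, $\width(a,b)\leq m-1$. Setting $m=d(a,b)$ yields the lower bound. The infinite case follows by contrapositive: any finite $\cA(x)$-path would produce extensions with finite width, forcing $\width(a,b)<\infty$.

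\emph{Main obstacle.} The delicate step is the simultaneous extension argument in the lower bound: although edge-adjacency in $\cA(x)$ is a purely local condition, we must realize the entire $\cA(x)$-path by globally embedded proper arcs. One must verify that representatives with pairwise distinct endpoints on $\partial D$ can be chosen consistently along the path, and that radial extensions outside $D$ indeed preserve disjointness of consecutive pairs without altering the local intersection pattern. Both points reduce to a generic-choice argument in the annulus, but the verification is the main technicality.
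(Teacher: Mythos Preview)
Your argument is correct, and it takes a genuinely different route from what the paper intends. The paper simply says that the proof is ``very similar to the proof in the previous section'' and leaves the details to the reader; in other words, the authors expect one to re-prove the two key properties
\[
\text{(i)}\quad \width(a',b)\geq w-1 \quad\text{for every neighbour }a'\text{ of }a,
\qquad
\text{(ii)}\quad \exists\,a'\text{ with }\width(a',b)\leq w-1,
\]
directly for germs, using the ``practical'' characterisation of the local width given just before the proposition, and then to run the same induction.

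You instead reduce the germ statement to the already-established open-annulus case by passing between germs at $x$ and proper arcs in $\Sigma\smallsetminus\{x\}\cong\bbS^1\times\R$. This works cleanly: for the upper bound it suffices to note that the infimum defining $\width(a,b)$ is taken over nonnegative integers and hence is attained by some pair of vertices $\hat A,\hat B$ of $\cA$, so your explicit radial construction is not even needed. For the lower bound, your extension of a geodesic germ-path to a path in $\cA$ is correct; the only point worth making explicit is that you need merely \emph{consecutive} exit points $p_i\neq p_{i+1}$ (not all pairwise distinct), and this is automatic once you take $D$ to be a closed disk small enough that $A_i\cap A_{i+1}=\{x\}$ in~$\overline D$. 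Your reduction has the advantage of reusing the annulus result rather than reproving it; the paper's approach avoids the extension construction but repeats the inductive core.
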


The proof is very similar to the proof in the previous section. Details are left to the reader.

\subsection{Proof of Proposition~\ref{pro:AdjacencyGerms}}
\label{sec:ProofAdjacencyGerms}

Let $a, b, a', b'$ be vertices of $\TransFin(\Sigma)$ such that $T(a,b)$ and $T(a', b')$ hold, and assume
$\Point(a,b) = \Point(a',b')$. Denote $x$ the common intersection point.

If $c$ is another vertex such that  $\{a, b, c\}$ is a 3-clique of type bouquet or extra bouquet, then the germs $\lfloor a\rfloor_x$ and $\lfloor c\rfloor_x$ are disjoint, thus obviously comparable. This entails the direct implication in Proposition~\ref{pro:AdjacencyGerms}.

Let us prove the converse implication. We assume that the germs $\lfloor a\rfloor_x$ and $\lfloor a'\rfloor_x$ are comparable. In other words, there exists arcs $\alpha_0, \dots, \alpha_n$ with $\alpha_i(0)=x$ and whose sequence of corresponding germs is a path from 
$\lfloor a\rfloor_x$ to $\lfloor a'\rfloor_x$ in the graph of germs.
Note that each germ $\alpha_i$ may be extended to a non separating curve $a_i$, and we can find another non separating curve $b_i$ such that $T(a_i, b_i)$ holds. Thus the end of the proof is a direct consequence of the following lemma.

%
%
%

\begin{lemma}
Let $a, b, a', b'$  be vertices in  $\TransFin(\Sigma)$ such that $T(a,b)$ and $T(a', b')$ hold. Assume that for some choices $\lfloor a\rfloor_x, \lfloor a'\rfloor_x$ of arcs at $x$ included respectively in $a$ and $a'$, the germs $\lfloor a\rfloor_x, \lfloor a'\rfloor_x$ intersect only at $x$. Then  $\{a, b\} \adjacent \{a', b'\}$.
\end{lemma}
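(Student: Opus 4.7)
The plan is to split according to whether $a$ and $a'$ locally coincide along an arc issued from $x$.

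First, if $a\cap a'$ contains a non-degenerate arc with endpoint $x$, then the conclusion follows immediately from Proposition~\ref{prop:SemiGermeAdjacent}. Note that our hypothesis forbids the specific germs $\lfloor a\rfloor_x$ and $\lfloor a'\rfloor_x$ from coinciding, but leaves room for coincidences among the three other pairs of half-germs at $x$; any such coincidence produces exactly such an arc in $a\cap a'$.

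Otherwise, none of the half-germs of $a$ at $x$ coincides with any half-germ of $a'$, so there is a small open disk $V$ around $x$ in which $a\cap V$ and $a'\cap V$ are two simple arcs whose only common point is $x$. The main step is then to construct a nonseparating simple closed curve $c$ passing through $x$ such that both $T(a,c)$ and $T(a',c)$ hold. I first pick inside $V$ a short simple arc $\gamma$ through $x$ that is topologically transverse to both $a$ and $a'$ at $x$; such an arc exists because the local picture of two topological arcs meeting at a single point in a disk places $x$ on the boundary of at least two opposite components of $V\smallsetminus(a\cup a')$. Next I fix a nonseparating simple closed curve $c_0$ in $\Sigma$ disjoint from $a\cup a'\cup V$, obtained from the standing assumption that $\Sigma$ admits nonseparating curves, modifying it by surgery if needed to move it off $a\cup a'$. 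Joining $\gamma$ to $c_0$ by a band embedded in $\Sigma\smallsetminus(a\cup a')$ produces a simple closed curve $c$ which contains $\gamma$, meets $a\cup a'$ only at $x$ transversely, and is nonseparating because, away from a small band, it carries the same homology class as $c_0$.

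Having constructed $c$, three applications of Lemma~\ref{lem:SpheresConnexes} --- applied to the triples of curves through $x$ given by $(a,b,c)$, then $(c,a,a')$, then $(a',b',c)$ --- give
\[
\{a,b\}\adjacent\{a,c\}\adjacent\{a',c\}\adjacent\{a',b'\},
\]
and transitivity of $\adjacent$ concludes the proof. The main obstacle is the construction of $c$ in the second case, and specifically its non-separation: the local transversality of $\gamma$ is immediate in the disk $V$, while the global extension to a nonseparating curve disjoint from $a\cup a'$ elsewhere is a routine surgery exploiting the existence of nonseparating simple closed curves in $\Sigma$.
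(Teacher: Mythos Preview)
Your Case~1 is correct, and the overall plan of manufacturing an intermediate curve $c$ and chaining applications of Lemma~\ref{lem:SpheresConnexes} is reasonable. Case~2, however, has two genuine gaps.

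First, the assertion that in a small disk $V$ the arcs $a\cap V$ and $a'\cap V$ meet only at $x$ does not follow from the hypothesis. You have only assumed that the \emph{specific} half-germs $\lfloor a\rfloor_x$ and $\lfloor a'\rfloor_x$ meet only at $x$; the remaining three pairs of half-germs may intersect in a sequence of points accumulating at $x$ without any pair sharing an arc. So your dichotomy ``either share an arc from $x$, or meet only at $x$ near $x$'' is false, and the local picture on which you base the construction of $\gamma$ is not secured.

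Second, and decisively, the nonseparating curve $c_0$ disjoint from $a\cup a'$ need not exist. On the torus, take $a$ a meridian and $a'$ a longitude meeting exactly once at $x$: then $\Sigma\smallsetminus(a\cup a')$ is a disk, which contains no nonseparating simple closed curve of $\Sigma$, and no surgery will produce one. Your band construction cannot proceed, and there is no evident repair within this framework (you would need $c$ to meet $a\cup a'$ away from $x$ in a controlled way, which is a different and harder problem).

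The paper's proof sidesteps both issues with a single move: rather than seeking a curve \emph{transverse} to both $a$ and $a'$ at $x$, it takes $c$ to locally \emph{coincide} with $\lfloor a\rfloor_x\cup\lfloor a'\rfloor_x$. By the hypothesis this union is an embedded arc near $x$; extend it to a nonseparating closed curve $c$, pick any $d$ with $T(c,d)$, and apply Proposition~\ref{prop:SemiGermeAdjacent} twice, once to the pair $(a,c)$ and once to $(a',c)$, since $c$ half-coincides with each. This uses only the single piece of local information actually given.
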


\begin{proof}[Proof of the lemma]
Let $c$ be an arc that contains $x$ in its interior and locally coincides with 
$\lfloor a\rfloor_x \cup \lfloor a'\rfloor_x$. Extend $c$ into a non separating closed curve, still denoted $c$, and consider any other non separating curve $d$ such that $T(c,d)$ holds. Since $c$ locally ``half coincides'' near $x$ with both $a$ and $a'$, we may apply Proposition~\ref{prop:SemiGermeAdjacent} twice, and get that 
$\{a,b\} \adjacent \{c,d\} \adjacent \{a', b'\}$.
\end{proof}

\subsection{Curves and diffeomorphisms}\label{ssec:Tourbillons}
In this short subsection we explain how one can use the fine curve graph to detect fundamental non differentiability.

Let $\Phi$ be an automorphism of $\TransFin(\Sigma)$.
We introduce the following property $D(\Phi)$: 

\emph{For every vertices  $a$, $b$ of $\TransFin(\Sigma)$ such that $T(a,b)$ holds, if $\point(\Phi(a),\Phi(b)) = \point(a,b)$ then there exists $a', b'$ such that $T(a', b')$ holds, $\point(a',b') = \point(a,b)$ and $\{\Phi(a'), \Phi(b') \} \adjacent \{a', b'\}$.}

 Note that this property is clearly invariant under conjugacy in the group of automorphisms. Let $h$ be a homemorphism of $\Sigma$, and denote $\Phi = \Phi_h$ the action of $h$ on the graph $\TransFin(\Sigma)$.
\begin{observation}
If $h$ is differentiable everywhere, then property $D(\Phi_h)$ holds.
\end{observation}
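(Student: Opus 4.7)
The goal is, given $a,b$ with $T(a,b)$ and $\point(h(a),h(b)) = \point(a,b) = x$, to produce a pair $a', b'$ through $x$ with $T(a',b')$ such that $\{h(a'), h(b')\} \adjacent \{a', b'\}$. I would first note that, since $h$ is a homeomorphism, the equality $h(a)\cap h(b) = a\cap b = \{x\}$ forces $h(x) = x$. I then choose $a'$ and $b'$ to be any two smooth nonseparating closed curves through $x$ intersecting transversely only at $x$, so that $T(a',b')$ and $\point(a',b')=x$ hold; since $h$ preserves the graph $\TransFin(\Sigma)$ (and hence the graph-definable property $T$, by Corollary~\ref{coro:TypesPrecis}) and fixes $x$, the pair $\{h(a'),h(b')\}$ satisfies the analogous conditions, with common intersection point $x$.

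By Proposition~\ref{pro:AdjacencyGerms}, the target adjacency $\{h(a'), h(b')\} \adjacent \{a', b'\}$ then reduces to showing that the germs $\lfloor a' \rfloor_x$ and $\lfloor h(a') \rfloor_x$ are comparable in the graph of germs $\cA(x)$. Working in a local chart centred at $x$ and using the hypothesis that $h$ is differentiable at $x$ (with invertible derivative $dh_x$), the expansion $h(x+tv) = x + t\, dh_x(v) + o(t)$, for $v$ a tangent vector of $a'$ at $x$, shows that the germ $\lfloor h(a') \rfloor_x$ lies in any prescribed arbitrarily thin sector around the line $dh_x(L)$, where $L$ is the tangent line of $a'$ at $x$.

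Two cases then arise. If $dh_x(L) \neq L$, the germs $\lfloor a' \rfloor_x$ and $\lfloor h(a') \rfloor_x$ lie in disjoint thin sectors away from $x$, so some neighborhood of $x$ meets them only at $x$, and they are thus neighbors (distance one) in $\cA(x)$. If $dh_x(L) = L$, I would introduce a bridging germ: pick a smooth arc $c$ through $x$ whose tangent line $L'$ at $x$ differs from $L$; by the same sector argument, $\lfloor c \rfloor_x$ is adjacent in $\cA(x)$ to both $\lfloor a' \rfloor_x$ and $\lfloor h(a') \rfloor_x$, so these two germs sit at distance at most two, hence are comparable. In either case, Proposition~\ref{pro:AdjacencyGerms} delivers the desired adjacency.

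The main obstacle is conceptual rather than technical: one must recognise that Proposition~\ref{pro:AdjacencyGerms} demands comparability, not adjacency, of germs. This is crucial in the degenerate case $dh_x(L) = L$, where $a'$ and $h(a')$ may a priori meet along a sequence of points accumulating at $x$ (nothing in mere differentiability of $h$ rules this out, since non-analytic smooth curves can oscillate against each other), and so the two germs need not themselves be neighbors. Bridging through a transverse germ sidesteps this subtlety and completes the proof.
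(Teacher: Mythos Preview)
Your argument follows the same route as the paper's: reduce via Proposition~\ref{pro:AdjacencyGerms} to the comparability of the germ of a smooth curve $a'$ with that of its image $h(a')$, and verify this using the differentiability of $h$ at the fixed point $x$. The paper merely asserts this last step as ``easy to check''; your explicit case analysis on whether $dh_x$ preserves the tangent line of $a'$, together with the bridging germ in the degenerate case, supplies exactly the detail the paper omits.

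One small point: you invoke invertibility of $dh_x$, but this does not follow from the hypothesis that $h$ is merely differentiable everywhere (for instance, the homeomorphism $re^{i\theta}\mapsto r^2 e^{i\theta}$ of the plane is differentiable everywhere with zero derivative at the origin). Your sector argument in fact only needs $dh_x(v)\neq 0$ for the tangent direction $v$ of $a'$, and since you are free to choose $a'$, this can be arranged whenever $dh_x\neq 0$. The residual case $dh_x=0$ is not covered by your argument, but it is equally glossed over in the paper's sketch.
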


Indeed, hypothesis $\point(\Phi(a),\Phi(b)) = \point(a,b)$ is equivalent to the fact that the point $x = \point(a,b)$ is a fixed point of $h$. Since $h$ is differentiable at $x$, it is easy to check that every germ of smooth arc at $x$ is comparable to its image. Take any two smooth curves $a', b'$ such that $T(a', b')$ and $\point (a', b') = \point(a,b)$, then Proposition~\ref{pro:AdjacencyGerms} tells us that $\{\Phi(a'), \Phi(b') \} \adjacent \{a', b'\}$.

Now consider a particular homeomorphism $h$ of $\Sigma$ and assume that $h$ admits a fixed point where, for some local polar coordinates, $h$ writes
$$
(r, \theta) \mapsto (r, \theta + \frac{1}{r}).
$$

\begin{observation}
Property $D(\Phi_h)$ does not hold.
\end{observation}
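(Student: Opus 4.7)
The plan is to exhibit a pair $a,b$ of nonseparating simple closed curves passing through the fixed point $x$ of $h$ with $T(a,b)$, and then argue that no pair $a',b'$ can satisfy the conclusion of $D(\Phi_h)$. The pair $a,b$ may be any two curves meeting transversally and only at $x$: since $h(x)=x$, one automatically has $\point(h(a),h(b))=x=\point(a,b)$, so the hypothesis of $D(\Phi_h)$ applies. Property $D(\Phi_h)$ would furnish a pair $a',b'$ with $T(a',b')$, $\point(a',b')=x$ and $\{h(a'),h(b')\}\adjacent\{a',b'\}$. Since both edges share the intersection point $x$, Proposition~\ref{pro:AdjacencyGerms} reformulates this adjacency as the comparability in $\cA(x)$ of the germs $\lfloor a'\rfloor_x$ and $\lfloor h(a')\rfloor_x$.

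It therefore suffices to establish the following key claim: \emph{for every simple arc $\alpha\colon[0,1]\to\Sigma$ with $\alpha(0)=x$ and $\alpha(t)\neq x$ for $t>0$, the germs $[\alpha]_x$ and $[h\circ\alpha]_x$ are not comparable in $\cA(x)$.} By Proposition~\ref{Prop:DistanceAndWidth}, this reduces to showing that their local relative width is infinite. To do so, I would lift everything to the universal cover of the punctured neighborhood $\R^2\smallsetminus\{0\}$, using coordinates $(s,\tilde\theta)$ with $s=-\log r$ and deck transformation $T\colon(s,\tilde\theta)\mapsto(s,\tilde\theta+2\pi)$. The homeomorphism $h$ lifts to the shear $\tilde h(s,\tilde\theta)=(s,\tilde\theta+e^s)$, whose angular shift $e^s$ diverges as $s\to+\infty$. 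Writing $\tilde\alpha(t)=(s(t),\tilde\theta(t))$ for a lift and $\widetilde{h\circ\alpha}(t)=(s(t),\tilde\theta(t)+e^{s(t)})$, the continuity of $s$ together with $s(t)\to+\infty$ as $t\to 0^+$ (both valid for every simple arc ending at $x$) allow the intermediate value theorem to produce, in any prescribed germ $(0,t_0]$ and for every sufficiently large positive integer $n$, a parameter $t_n\in(0,t_0]$ solving $e^{s(t_n)}=2\pi n$. At such a $t_n$ one verifies directly
\[
T^{-n}\widetilde{h\circ\alpha}(t_n)=\bigl(s(t_n),\tilde\theta(t_n)+e^{s(t_n)}-2\pi n\bigr)=\tilde\alpha(t_n),
\]
so $\tilde\alpha$ meets the $(-n)$-th integer translate of $\widetilde{h\circ\alpha}$ inside the germ. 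Letting $n$ range over all sufficiently large integers exhibits infinitely many translates met, whence the local relative width is infinite and $[\alpha]_x$ and $[h\circ\alpha]_x$ lie in distinct components of $\cA(x)$.

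The only real difficulty is to ensure uniformity in $\alpha$, rather than just handling a radial model: this is taken care of by noticing that the intermediate value construction uses only the two soft properties $s(t)\to+\infty$ as $t\to 0^+$ and the continuity of $s$, which hold for every simple arc at $x$. The exponentially growing twist $e^s$ is the geometric source of the failure of $D(\Phi_h)$: it forces every germ of simple arc at $x$ to spiral infinitely often relative to its $h$-image, which is precisely the ``infinite spiraling'' phenomenon underlying the obstruction to differentiability discussed in Section~\ref{ssec:Tourbillons}.
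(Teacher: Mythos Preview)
Your argument is correct, and it follows the same overall architecture as the paper's proof: assume $D(\Phi_h)$, invoke Proposition~\ref{pro:AdjacencyGerms} to translate the adjacency $\{h(a'),h(b')\}\adjacent\{a',b'\}$ into comparability of the germs $\lfloor a'\rfloor_x$ and $\lfloor h(a')\rfloor_x$, and then derive a contradiction from the infinite spiraling of $h$ near $x$.

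The difference lies in how the contradiction is obtained. The paper appeals to the machinery of the \emph{local rotation interval} from~\cite{FredLocRot}: it observes that the local rotation interval of $h$ at $x$ is $\{+\infty\}$, while comparability of a germ with its image would force this interval to be bounded. Your proof is more self-contained: you bypass the external reference entirely and compute the local relative width directly, using only Proposition~\ref{Prop:DistanceAndWidth} and an intermediate-value argument on the radial coordinate $s=-\log r$. The explicit equation $e^{s(t_n)}=2\pi n$ exhibits, for every $t_0$, infinitely many deck translates of $\widetilde{h\circ\alpha}$ meeting $\widetilde\alpha|_{(0,t_0]}$, which is exactly what Proposition~\ref{Prop:DistanceAndWidth} requires. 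This makes your argument entirely internal to the paper and arguably more transparent for this particular $h$; the paper's route, on the other hand, indicates how the obstruction fits into a broader invariant (the local rotation interval) that applies to a whole class of homeomorphisms without recomputing widths each time.
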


An easy proof of this is obtained by considering the
\emph{local rotation interval} of $h$ at $x$, as defined in~\cite{FredLocRot},
section 2.3.
Indeed, the local rotation interval of $h$ at $x$ equals $\{+\infty\}$,
which accounts for the fact that orbits turn faster and faster around $x$, in
the positive direction, as we get nearer and nearer to $x$ (the quickest way to
check this is to show that the \emph{local rotation set} of $h$ at $x$
is $\{+\infty\}$, and then to apply Théorème 3.9 of~\cite{FredLocRot} that
relates the local rotation set and the local rotation interval). We argue by
contradiction to show that property $D(\Phi_h)$ does not hold.
Assuming property $D(\Phi_h)$ holds, consider curves $a,b$ such that $T(a,b)$
holds and $\point(a, b) = x$. Let $a', b'$ be given by property $D(\Phi_h)$,
such that $\{\Phi(a'), \Phi(b') \} \adjacent \{a', b'\}$.
The reverse direction of Proposition~\ref{pro:AdjacencyGerms} tells us that the
germs of $h(a')$ and $a'$ are comparable at $x$. This entails easily, from the
definition, that the local rotation interval of $h$ at $s$ is a bounded
interval, a contradiction.

\section{Fine graph of smooth curves}\label{sec:AutCFinLisse}

In this section we address the case of smooth curves, and
prove Theorem~\ref{thm:AutCFinLisse} and Proposition~\ref{prop:PasDiff}.
In all the section, $\Sigma$ will be a connected, nonspherical
surface without boundary, endowed with a smooth structure.
In Section~\ref{sec:configSmoothCurves} we will restrict to the orientable case.

\subsection{From bijections to higher regularity}

One step in the proof of Theorem~\ref{thm:AutC1} was
Proposition~\ref{prop:BijHomeo}, in which we proved
that if an automorphism of $\TransFin(\Sigma)$ is
supported by a bijection of $\Sigma$, then that
bijection is a homeomorphism of $\Sigma$.

We may ask the same question about automorphisms of
$\TransFinLisse(\Sigma)$, and this paragraph is devoted
to the proof of the following two statements.
We denote by $\Homeo_{\infty \pitchfork}(\Sigma)$ the group of
bijections of $\Sigma$ which preserve the family of
smooth, nonseparating closed curves, and preserve transversality
between such curves.
The first statment below justifies this notation. Here, for simplicity we
restrict to the case of orientable surfaces.

\begin{proposition}\label{prop:BijLisseHomeo}
  Let $\Sigma$ be a connected, non spherical orientable surface.
  The group  $\Homeo_{\infty \pitchfork}(\Sigma)$ is contained in
   $\Homeo(\Sigma)$.
\end{proposition}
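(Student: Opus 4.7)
The plan is to mimic the proof of Proposition~\ref{prop:BijHomeo}, the new difficulty being that the nonseparating simple closed curves used there must now be smooth. Let $f \in \Homeo_{\infty \pitchfork}(\Sigma)$; it suffices to prove that $f$ is continuous, since the hypotheses are symmetric in $f$ and $f^{-1}$. Suppose for contradiction that $f$ is discontinuous at some $x \in \Sigma$, so there exist distinct $x_n \to x$ and an open neighborhood $V$ of $f(x)$ with $f(x_n) \notin V$ for every $n$.

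The key technical step, replacing the elementary arc construction in Proposition~\ref{prop:BijHomeo}, is the following: after extracting a subsequence of $(x_n)$, the remaining points lie on a smoothly embedded arc $\gamma \subset \Sigma$ with endpoint~$x$. To prove this, fix $C^\infty$ local coordinates identifying a neighborhood of $x$ with an open subset of $\R^2$ containing the origin. By compactness of $S^1$ we may extract so that the directions $(x_n - x)/\|x_n - x\|$ converge to a unit vector; a linear change of coordinates turns it into $(1,0)$, so writing $x_n = (a_n, b_n)$ we have $a_n > 0$ and $b_n/a_n \to 0$. An iterated diagonal extraction now produces a sparse subsequence equipped with formal Taylor-like coefficients $c_k$ at the origin (i.e., $b_n - \sum_{j \le k} c_j a_n^j / j! = o(a_n^k)$ for every $k$), so that the prescribed data satisfies the jet compatibility conditions of Whitney's extension theorem. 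This yields a $C^\infty$ function $g$ on $[0, a_1]$ with $g^{(k)}(0) = c_k$ for all $k$ and $g(a_n) = b_n$ at every remaining $n$, whose graph is the desired arc~$\gamma$.

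Given $\gamma$, extend it to a smooth nonseparating simple closed curve $\alpha \subset \Sigma$, which is possible since $\Sigma$ is nonspherical. By hypothesis on $f$, the image $f(\alpha)$ is a smooth nonseparating simple closed curve, passing through $f(x) \in V$ and through every $f(x_n) \notin V$. We perform a $C^\infty$-small surgery on $f(\alpha)$ supported in a neighborhood $W$ of $f(x)$ with $\overline{W} \subset V$, replacing the smooth subarc of $f(\alpha)$ meeting $W$ by a $C^\infty$-close smooth arc that avoids $f(x)$ (concretely, in local coordinates in which $f(\alpha)$ is the graph of a smooth function, add a small smooth bump supported in~$W$). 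The resulting simple closed curve $\beta$ is smooth, ambient isotopic to $f(\alpha)$ and hence nonseparating, coincides with $f(\alpha)$ outside $V$, and avoids $f(x)$.

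Applying the hypothesis to $f^{-1}$, the set $f^{-1}(\beta)$ is a smooth nonseparating simple closed curve, in particular a closed subset of $\Sigma$. Since $\beta$ and $f(\alpha)$ agree outside $V$ and every $f(x_n)$ lies in $f(\alpha) \setminus V$, each $x_n$ (of the extracted subsequence) belongs to $f^{-1}(\beta)$; on the other hand, $x \notin f^{-1}(\beta)$ because $f(x) \notin \beta$. This contradicts $x_n \to x$ and finishes the proof. The main obstacle is the technical construction of $\gamma$: a merely topological arc through $(x_n)$ is immediate as in the $C^0$ proof, but producing a smooth one forces us to pass to a sparse subsequence and control an infinite jet of data at $x$ via Whitney extension; once $\gamma$ is in hand, the subsequent smooth surgery is routine.
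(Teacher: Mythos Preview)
Your proof has a genuine gap at the key technical step: the claim that some subsequence of $(x_n)$ lies on a smoothly embedded arc through $x$ is false in general, and no diagonal extraction can rescue it. In local coordinates with $x=0$, take $x_n=(4^{-n},8^{-n})$; then $x_n/\lVert x_n\rVert\to(1,0)$ and, writing $x_n=(a_n,b_n)$, we have $b_n/a_n=2^{-n}\to 0$, so your preliminary normalization is already satisfied. Suppose a smooth embedded arc through $0$ contains infinitely many $x_{n_k}$. Since these points approach $0$ in the direction $(1,0)$, the tangent to the arc at $0$ is horizontal, and near $0$ the arc is the graph $y=g(x)$ of a smooth function with $g(0)=g'(0)=0$. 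Then $g(x)=O(x^2)$, yet $g(4^{-n_k})=8^{-n_k}$ gives $8^{-n_k}/16^{-n_k}=2^{n_k}\to\infty$, a contradiction. Crucially, every infinite subsequence still satisfies $b_n=a_n^{3/2}$, so extraction cannot help. In your Whitney scheme this appears at once: after $c_0=c_1=0$, the level-$2$ condition $b_n-\tfrac{c_2}{2}a_n^2=o(a_n^2)$ would force $a_n^{-1/2}\to c_2/2$, which fails along any subsequence.

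The paper avoids this obstacle by abandoning the pointwise strategy of Proposition~\ref{prop:BijHomeo} entirely and arguing that $h$ is an open map. For nonseparating curves $a,b,c$, it sets $V(a;b,c)$ to be the union of all nonseparating curves meeting $a$ and disjoint from $b\cup c$; this is a union of connected components of $\Sigma\setminus(b\cup c)$, hence open. Taking $b,c$ to be push-offs of $a$ on either side and varying them gives a neighborhood basis of $a$, so these sets generate the topology of $\Sigma$. Since $h$ bijects the family of nonseparating curves, $h(V(a;b,c))=V(h(a);h(b),h(c))$ is again of this form, and $h$ sends open sets to open sets.
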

%

\begin{proof}[Proof of Proposition~\ref{prop:BijLisseHomeo}]
%
  Let $h \in  \Homeo_{\infty \pitchfork}(\Sigma)$. We will prove that the image under $h$
  of any open set is an open set. This is the continuity of $h^{-1}$, and by
  applying the argument to $h$ we also get the continuity of $h$.
  
  To do this, we only need to consider the images of a family of sets that
  generates the topology. Given three non separating curves $a,b,c$, we denote
  $V(a; b, c)$ the union of all the non separating curves $d$ that meet $a$
  and are disjoint from $b$ and $c$.
  
  \begin{observation}
    The set $V(a ; b,c)$ is the union of some of the connected components of the
    complement of $b \cup c$ that meet $a$. In particular, it is an open set.
  \end{observation}
  Indeed, let $x$ be a point of $V(a; b,c)$. By definition there is a non
  separating curve  $d$ passing through $x$ and meeting $a$ but not $b$ nor $c$.
  Consider another point $y$ that belongs to the connected component $V_x$ of
  the complement of $b \cup c$ that contains $x$. By modifying $d$ using an arc
  connecting $x$ to $y$ in $V_c$, we find another curve $d'$, isotopic to $d$,
  still meeting $a$ but not $b$ nor $c$, and passing through $y$. This proves
  that $V(a ; b,c)$ contains $V_x$, and the observation follows.
  
  Now let $a$ be a nonseparating curve.
  Let $a^+, a^-$ be obtained by pushing $a$ to both sides. Then $V(a; a^+, a^-)$
  is a neighborhood of $a$, and by making $a^+$ and $a^-$ vary we get a basis
  of neighborhoods ${\mathcal B}(a)$ of the curve $a$.
  The union of all these families ${\mathcal B}(a)$ clearly 
  generates the topology of~$\Sigma$.
  
  Thus it suffices to check that the image under $h$ of each set $V(a; b, c)$
  is an open set. But since $h$ is a bijection, we have
  \[ h(V(a; b, c)) = V( h(a); h(b), h(c)). \]
  By hypothesis $h(a), h(b), h(c)$ are nonseparating closed curves, and by the
  observation this set is open.
\end{proof}

We now prove Proposition~\ref{prop:PasDiff} stated in the introduction,
namely the existence of elements of $\Homeo_{\infty \pitchfork}(\Sigma)$ that are not smooth.

\begin{proof}[Proof of Proposition~\ref{prop:PasDiff}]
  We will construct a homeomorphism
  $F\colon\R^2\to\R^2$, which is not differentiable at the origin,
  but such that both $F$ and $F^{-1}$ send smooth curves to
  smooth curves. The construction can easily be modified to make $F$ compactly    
  supported, and then be transported on our surface~$\Sigma$. It will be
  clear from the constuction that this map preserves transversality.
  
  Let $h\colon\R\to\R$ be a smooth diffeomorphism supported in the
  segment $[1/2, 2]$. That is to say, $h(x)=x$ for all $x$ outside
  $[1/2,2]$; we suppose however that $h(1)\neq 1$.
  We consider the map $F$ defined by $F(x,y)=(x, xh(y/x))$ if
  $x\neq 0$, and $F(x,y)=(x,y)$ otherwise. We claim that this map
  has the desired property.
  
  This map, as well as its inverse, is obviouly smooth in
  restriction to $\R^2\smallsetminus\{(0,0)\}$. Direct computation
  shows that $F$ has directional derivatives in all directions
  around the origin,
  but the ``differential''  fails to be linear:
  both partial derivatives are
  those of the identity, while the
  directional derivative in the direction $(1,1)$ is not.
  So $F$ is not differentiable at the origin.
  
  Now, let $\gamma\colon\R\to\R^2$ be a smooth, proper embedding.
  If $(0,0)$ is not in the image of $\gamma$, then of course,
  $F\circ\gamma$ is still smooth. So suppose, say, that
  $\gamma(0)=(0,0)$.
  If $\gamma'(0)$ lies outside the two (opposite) sectors of
  vectors of slopes between $1/2$ and $2$, then $F\circ\gamma$
  and $\gamma$ have the same germ at $0$.
  Otherwise,
  and up to reparameterization,
  we can write, near $0$,
  $\gamma(t) = (t, \alpha(t))$ where $\alpha$ is a smooth map
  (satisfying $\alpha(0)=0)$),
  from a neighborhood of $0$, to~$\R$. This yields the formula
  \[ F\circ\gamma(t) = \left(t, th \left(\frac{\alpha(t)}{t}\right)\right).  \]
  Now, the smoothness of $F\circ\gamma$
  follows from the following elementary observation.
  
  \noindent
  {\bf Claim.}
  {\em Let $\alpha\colon\R\to\R$ be a smooth map
  satisfying $\alpha(0)=0$. Then the map $t\mapsto \frac{\alpha(t)}{t}$
  when $t\neq 0$, and $\alpha'(0)$ when $t=0$, is smooth.}
  
  Indeed, by the fundamental theorem of Calculus, for all $t\in\R^\ast$ we have
  \[ \frac{\alpha(t)}{t} = \int_0^1 \alpha'(ts)ds, \]
  and this integral with parameter can be differentiated
  indefinitely.\footnote{We borrow this elegant argument from~\cite{stack}.}
  %
\end{proof}


\subsection{A weak convergence for sequences of curves}

In order to prove~Theorem~\ref{thm:AutCFinLisse}, we now explain how to recognize configurations of smooth curves.
Given two vertices $a$ and $b$ of the graph $\TransFinLisse(\Sigma)$,
we will denote by $a-b$ the property that they are neighbors in the graph.
If $(f_n)_{n\in\N}$ is a sequence of vertices, we denote by
$(f_n)_{n\in\N}-a$ the property that for all $n$ large enough, $f_n-a$.
The first property of sequences we may recover from the graph is the
distinction of what curves go to infinity.
\begin{lemma}\label{lem:fnSenVa}
  Let $(f_n)_{n\in\N}$ be a sequence of vertices of $\TransFinLisse(\Sigma)$.
  The following are equivalent.
  \begin{itemize}
  \item[-] for all $d$, we have $(f_n)_{n\in\N}-d$;
  \item[-] for every compact subset $K$ of $\Sigma$, for every $n$ large
    enough, $K\cap f_n=\emptyset$.
  \end{itemize}
\end{lemma}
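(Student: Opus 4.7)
The backward direction is immediate: any smooth nonseparating simple closed curve $d\subset\Sigma$ is a compact subset of $\Sigma$, so by hypothesis $f_n\cap d=\emptyset$ for $n$ large, whence $f_n-d$.

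For the forward direction I argue by contraposition. Suppose some compact $K\subset\Sigma$ meets $f_n$ for infinitely many $n$; after extraction I may assume $f_n\cap K\neq\emptyset$ for all $n$, pick $x_n\in f_n\cap K$, and use compactness to extract further so that $x_n\to x\in K$ and so that the projective tangent line $T_{x_n}f_n$ converges to some projective line $T$ in the tangent space at $x$. The aim is to exhibit a single smooth nonseparating simple closed curve $d\subset\Sigma$ such that $\{f_n,d\}$ fails to be an edge of $\TransFinLisse(\Sigma)$ for infinitely many $n$, contradicting the first property.

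The construction of $d$ will be local. I plan to fix a smooth chart around $x$ identifying a neighborhood with an open subset of $\R^2$, sending $x$ to the origin and $T$ to the horizontal direction, and to take as local models the two parabolic arcs $y=x^2$ and $y=-x^2$, both tangent to $T$ at $0$. Using the change of coordinates principle together with the nonsphericity of $\Sigma$, each of these local arcs extends to a smooth nonseparating simple closed curve $d^+,d^-\subset\Sigma$. For $n$ large, the curve $f_n$ is locally the graph $y=g_n(x)$ of a smooth function near $x_n=(a_n,b_n)$, with $g_n'(a_n)$ tending to $0$. An intermediate value argument comparing $g_n$ with $\pm x^2$ near $0$ will show that $|f_n\cap d^+|\geq 2$ near $0$ whenever $b_n>a_n^2$, and $|f_n\cap d^-|\geq 2$ whenever $b_n<-a_n^2$. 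A pigeonhole on the sign of $b_n$ relative to $\pm a_n^2$ then selects one of $d^+,d^-$ that is crossed at least twice by $f_n$ for infinitely many $n$; along this subsequence $\{f_n,d\}$ is not an edge, yielding the required contradiction.

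The main obstacle will be the degenerate subcase where $|b_n|\leq a_n^2$ for all large $n$, in which neither parabolic model detects a double crossing near $0$. To handle it I would iterate the construction with finer test models $y=\pm x^k$ for $k\geq 3$, each extended to a smooth nonseparating simple closed curve in $\Sigma$: if none of these works, then along a diagonal subsequence the germ of $f_n$ at $x_n$ is flatter than every polynomial with respect to $T$, and a smooth globalization of the horizontal tangent line itself through $x$ will then produce a non-transverse intersection with $f_n$ near $x_n$ (because the infinite-order jet of $f_n$ at $x_n$ matches $T$), once again defeating the adjacency $f_n-d$.
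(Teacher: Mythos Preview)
The backward direction is fine and matches the paper.

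The forward direction has a genuine gap. Your ``intermediate value argument'' asserting $|f_n\cap d^+|\geq 2$ near $0$ whenever $b_n>a_n^2$ is not valid. Take, in your chart, $f_n$ locally given by $g_n(x)=x^2+\tfrac{1}{n}$, with $x_n=(0,\tfrac{1}{n})$. Then $a_n=0$, $b_n=\tfrac{1}{n}>a_n^2$, and $g_n'(a_n)=0$, so your hypotheses hold; but $g_n(x)-x^2=\tfrac{1}{n}>0$ everywhere, so $f_n$ never meets the parabola $y=x^2$ in the chart. Since $d^+$ is completed to a closed curve outside the chart in an uncontrolled way, you have no grip on $f_n\cap d^+$ globally either; it may well consist of a single transverse point, giving $f_n-d^+$. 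The ``degenerate subcase'' is also mishandled: the inequality $|b_n|\leq a_n^k$ concerns only the location of the single point $x_n$ and says nothing about the derivatives of $f_n$ at $x_n$, so there is no jet-matching conclusion to draw, and a curve tangent to $T$ at $x$ need not even meet $f_n$ near $x$.

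The underlying problem is that first-order data at one point of $f_n$ is far too weak to force multiple or non-transverse intersections with any fixed smooth test curve. The paper's proof avoids this entirely with a purely topological device: around the accumulation point $x$ it places three nested ``bottle-shaped'' curves $d_1,d_2,d_3$, each surrounding $x$ except along a narrow neck, arranged so that the necks are mutually separated. Any $f_n$ that enters the inner ball and leaves the outer one must either cross $d_1$ twice (if it avoids $d_1$'s neck) or use that neck; iterating through $d_2$ forces a second arc through another neck, and then $d_3$ is crossed at least twice no matter what. This forces $\neg(f_n-d_j)$ for some $j$, and no smoothness of the $f_n$ is used. You should replace the tangent-line analysis by this kind of topological trap.
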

\begin{proof}
  The second statement obviously implies the first, as we may just
  take $K=d$. Let us prove the converse implication by contraposition.
  Suppose $K$ intersects infinitely many $f_n$. Since $K$ is compact,
  there is a point $x\in K$, such that every neighborhood of $x$
  intersects infinitely many $f_n$. We consider two open sets
  $B_1$ and $B_2$ with $x\in B_1$ and $\overline{B_1}\subset B_2$,
  and three bottle-shaped arcs, as in Figure~\ref{fig:3Bouteilles}.
  These arcs may be continued to form three nonseparating closed curves,
  $d_1$, $d_2$ and $d_3$. Now, let $n$ be such that $f_n$ enters $B_1$.
  If $f_n$ does not enter nor leave $B_2$ through the neck of the bottle
  corresponding to $d_1$, then we cannot have $f_n-d_1$, since $f_n$ and
  $d_1$ have to intersect at least twice. Hence, $f_n$ passes
  through the neck of $d_1$, and in order to impose that $f_n-d_2$, another
  arc of $f_n$ has to get out of the bottle corresponding of $d_2$
  through its neck. But then $f_n$ has to meet $d_3$ twice, and we cannot have 
  $f_n-d_3$. In other words, for
  all $n$ such that $f_n$ enters $B_1$, we can't have $f_n-d_1$ and $f_n-d_2$
  and $f_n-d_3$, hence the first statement is not true, and our
  implication is proved.
  \begin{figure}[htb]
  \begin{asy}
    import geometry;
    
    fill(circle((0,0),35), lightgray);
    
    fill(circle((0,0),17), lightgreen);
    label("{\small $B_1$}", (0,0), NW);
    label("{\small $B_2$}", 35*dir(0), E);
    
    dot((0,0)); label("{\small $x$}", (0,0), SE);
    
    real r1 = 29, r2 = 25, r3 = 21;
    path d1 = (-40, 8)--(-32,8){right}..(r1*dir(135)){dir(45)}..(r1*dir(90)){right}..
      (r1*dir(0)){down}..(r1*dir(-90)){left}..(r1*dir(-135)){dir(135)}..
      {left}(-32,-8)--(-40,-8);
    
    path d2 = (-40, 8)--(-32,8){right}..(r2*dir(135)){dir(45)}..(r2*dir(90)){right}..
      (r2*dir(0)){down}..(r2*dir(-90)){left}..(r2*dir(-135)){dir(135)}..
      {left}(-32,-8)--(-40,-8);
    path d2tourne = rotate(120)*d2;
    
    path d3 = (-40, 8)--(-32,8){right}..(r3*dir(135)){dir(45)}..(r3*dir(90)){right}..
      (r3*dir(0)){down}..(r3*dir(-90)){left}..(r3*dir(-135)){dir(135)}..
      {left}(-32,-8)--(-40,-8);
    path d3tourne = rotate(-120)*d3;
    
    draw(d1, blue);  draw(rotate(120)*d2, red);  draw(rotate(-120)*d3, green);
    
    label("{\small Any arc from $B_1$ to the outside of $B_2$ and}", (0,-50));
    label("{\small disjoint from $d_1$ must meet $d_2$ and $d_3$.}", (0,-64));
  \end{asy}
  \caption{Three bottles}
  \label{fig:3Bouteilles}
  \end{figure}
\end{proof}
Thus, we will say here that a sequence $(f_n)_{n\in\N}$ is {\em relevant}
if it has no subsequence $(f_{\varphi(n)})_{n\in\N}$ such that for
all $d$, $(f_{\varphi(n)})_{n\in\N}-d$.
We now explore, for such sequences, the following notion of convergence.
We say that a relevant sequence $(f_n)$ {\em converges in a weak sense}
to a curve $a$ if for every $d$ such that $a-d$, we have $(f_n)-d$.
We denote this property by~$W((f_n),a)$.
\begin{lemma}\label{lem:ConvFaible}
  Let $(f_n)_{n\in\N}$ be a sequence of vertices, and $a$ be a vertex
  of $\TransFinLisse(\Sigma)$.
  \begin{itemize}
  \item[$\bullet$]
  If $W((f_n),a)$, then the sequence $(f_n)_{n\in\N}$ converges
  in the Hausdorff topology to $a$: for every neighborhood $V$ of the
  curve $a$, for all $n$ large enough, we have $f_n\subset V$.
  \item[$\bullet$]
  If the sequence $(f_n)_{n\in\N}$ of curves, with some appropriate
  parameterization, converges in $C^1$-topology to $a$, then
  $W((f_n)_{n\in\N},a)$.
  \end{itemize}
\end{lemma}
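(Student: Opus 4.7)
The plan is to treat the two implications separately. \emph{For the second bullet point}, I would start from the $C^1$-convergence $f_n\to a$, which puts all but finitely many $f_n$'s in a fixed compact tubular neighborhood of $a$ and thereby ensures $(f_n)$ is relevant. Taking any neighbor $d$ of $a$ in $\TransFinLisse(\Sigma)$, I distinguish two sub-cases. If $a\cap d=\emptyset$ then $a$ and $d$ are disjoint compact sets, hence uniformly separated, and the $C^0$-part of the convergence gives $f_n\cap d=\emptyset$ for $n$ large. If $a\cap d=\{p\}$ is a transverse intersection, I pick a chart around $p$ in which $a$ and $d$ appear as transverse smooth arcs; the $C^1$-openness of transversality (implicit function theorem) combined with $C^1$-convergence of $f_n$ near $p$ then produces a single transverse intersection of $f_n$ with $d$ near $p$, while $C^0$-closeness forces $f_n\cap d=\emptyset$ away from $p$. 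Either way $f_n-d$ for $n$ large, so $W((f_n),a)$ holds.

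\emph{For the first bullet point}, I proceed by contradiction. Assuming $W((f_n),a)$, suppose some open neighborhood $V\supset a$ is not eventually contained in: pick $x_n\in f_n\setminus V$ along a subsequence. I first reduce to the case when $(x_n)$ accumulates in $\Sigma$. If no subsequence of $(x_n)$ has a finite accumulation point, then $x_n$ escapes every compact set; but relevance of $(f_n)$ (which is inherited by every subsequence) produces points $y_n\in f_n$ lying in a common compact set, and connectedness of $f_n$ then forces it to cross $\partial V$ at some $z_n$. Compactness of $\partial V$ allows extracting $z_n\to z\in\partial V\subset\Sigma\setminus a$, after which I replace $x_n$ by $z_n$. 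In either case, I now have $x_n\to x$ with $x\notin a$.

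Working in a chart $(U,0)$ around $x$ disjoint from $a$, I pass to further subsequences so that (i) the tangent line of $f_n$ at $x_n$ converges to some $\ell\in\mathbb{RP}^1$, and, after rotating coordinates so that $\ell$ is horizontal, (ii) the sign of the local height $b_n$ of $x_n=(a_n,b_n)$ and the limit of the local second derivative $\kappa_n=\psi_n''(a_n)$ in $[-\infty,+\infty]$ are fixed, where $y=\psi_n(x)$ is the graph of $f_n$ near $x_n$. I then construct a smooth nonseparating simple closed curve $d\subset\Sigma$ with $d\cap a=\emptyset$ whose germ inside $U$ is either a horizontal line at a suitable small height $y_0$ or a parabolic arc of appropriately chosen curvature sign, tuned to the extracted asymptotics. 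Such a $d$ is produced by realizing the prescribed local jet in the chart and then extending via the change-of-coordinates principle to a nonseparating simple closed curve inside $\Sigma\setminus a$. A direct local analysis of the equation $\psi_n(x)-\phi(x)=0$ (with $y=\phi(x)$ defining $d\cap U$) shows that, for $n$ large, this equation has either two distinct transverse solutions near $x_n$ or a tangency there; in both situations $d-f_n$ fails, which, together with $a-d$, contradicts $W((f_n),a)$.

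The main obstacle is precisely this matching between the second-order shape of $d$ and the asymptotic local behavior of $f_n$ at $x_n$. One must, via nested subsequence extractions, handle simultaneously the sign of $b_n$, the sign and finiteness of $\lim\kappa_n$, and the degenerate cases where leading coefficients vanish; in each branch of the resulting case analysis, a separate choice of $y_0$, of cap orientation, or of cap magnitude for $d$ is needed so that the local intersection equation acquires too many roots near $x_n$.
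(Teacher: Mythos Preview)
Your treatment of the second bullet is correct and simply spells out what the paper compresses into the phrase ``well known stability of transversality in the $C^1$-topology.''

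For the first bullet, however, your approach has a genuine gap, and it is also far more involved than necessary. You try to produce a \emph{single} curve $d$, chosen according to the $2$-jet data of $f_n$ at $x_n$ (sign of $b_n$, limiting tangent, limit of $\kappa_n$), so that $\psi_n-\phi$ has at least two zeros or a tangency near $x_n$. But the $2$-jet of $f_n$ at one point imposes no control whatsoever on $f_n$ over a fixed neighborhood: the curves $f_n$ are merely smooth, with no uniform bound on higher derivatives. Concretely, take (after your rotation) $a_n=0$, $b_n=1/n$, $\kappa_n=0$, and let $\psi_n$ be flat ($\equiv 1/n$) on $\{x\geq 0\}$ while on $\{x<0\}$ it rises monotonically with $\psi_n(x)\geq 1/n + \tfrac{n}{2}x^2$ outside a shrinking transition zone (this can be arranged smoothly with all derivatives vanishing at $0$). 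Then any fixed horizontal line $y=y_0>0$ meets $f_n$ exactly once (on the left, transversely) for large $n$, and any fixed parabola $y=Mx^2$ meets $f_n$ exactly once (on the right, transversely) for $n>2M$. So neither of your two candidate shapes for $d$ forces $f_n-d$ to fail. More generally, any single arc through $x$ leaves room for $f_n$ to cross it exactly once; this is precisely why an argument with one test curve cannot succeed.

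The paper's argument avoids all local analysis. It reuses the ``three bottles'' construction from the preceding lemma: around an accumulation point $x\notin a$ one builds three bottle-shaped arcs and completes them to nonseparating curves $d_1,d_2,d_3$, each of which can be made to satisfy $a-d_i$ since $x$ lies away from $a$. Any curve that enters the inner ball $B_1$ and exits $B_2$ must fail $f_n-d_i$ for at least one $i\in\{1,2,3\}$, by the purely topological reasoning already given there. This immediately contradicts $W((f_n),a)$. You should replace your jet-based construction by this three-curve argument.
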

\begin{proof}
  We first prove the first point. Let us first mention that the statement
  we wrote is indeed equivalent to the Hausdorff convergence, because
  any essential curve in a small enough neighbourhood of $a$ must pass close
  to every point of $a$, and thus is Hausdorff-close to $a$.
  Suppose for contradiction that
  for some neighborhood $V$ of $a$, we have $f_n\not\subset V$
  infinitely often. Then we may find a point $x$, not in $\overline{V}$,
  such that every neighborhood of $x$ is visited by infinitely many
  $f_n$. We may construct three bottle-shaped arcs around $x$ exactly
  as in the proof of the preceding lemma, and complete these arcs
  to non separating simple closed curves $d_1$, $d_2$, $d_3$, which
  can be requested to satisfy $a-d_i$ for $i=1,2,3$. Then we cannot
  have $f_n-d_i$ for all $i\in\{1,2,3\}$ for the same reason as in
  this preceding proof, and this contradicts the hypothesis
  that~$W((f_n),a)$ holds.
  
  The second point is the well known stability of transversality in
  the $C^1$-topology.
  %
\end{proof}
\begin{remark}
  In fact, the condition $W((f_n), a)$ implies $C^0$-convergence, in the
  following sense. Given a parameterization $\alpha$ of $a$, we can choose the
  parameterizations of the $f_n$'s yielding a sequence of parmaetrized curves 
  converging uniformly to~$\alpha$.
  
  As we will not use this fact, we only sketch a quick argument. Let $V$ be
  a small tubular neighborhood of $a$, and let $d_1,\ldots,d_N$ be simple
  closed nonseparating curves, each meeting $a$ transversely at one point,
  and cutting $V$ in small chunks $V_1,\ldots,V_N$ that are met by $\alpha$ in 
  that
  cyclic order. Let $n$ be large enough so that $f_n\subset V$ and
  $f_n-d_i$ for each $i$. Then $f_n\cap V_i$ is connected, and $f_n$
  visits the pieces $V_1,\ldots,V_N$ in that order. Hence we may choose
  a parameterization of $f_n$, say, $F_n$, in such a way that for all
  $i\in\{1,\ldots,N\}$ and all $t$, $F_n(t)\in V_i$ if and only if
  $\alpha(t)\in V_i$. This implies that $F_n$ is uniformly close to $\alpha$.
\end{remark}
This notion is actually somewhere strictly in between
$C^0$-convergence and $C^1$-convergence,
as we remark in the following
example. This construction will play a crucial role below in the proof of
Theorem~\ref{thm:AutCFinLisse}.
\begin{example}\label{eg:ConvFaible}
  Let $a$ be a smooth nonseparating curve in $\Sigma$. We choose
  a point $p$ in $a$, and a chart around one of its point, diffeomorphic
  to $\R^2$, in such a way that $a$ corresponds to the axis of
  equation $y=0$ in that plane. In this chart, we consider the
  functions
  $f_1\colon x\mapsto\frac{2}{1+x^2}$,
  and for all $n\geqslant 2$,
  $f_n\colon x\mapsto\frac{f_1(nx)}{n}$.
  Abusively, we still denote their graphs by the same letters,
  and then, we may extend these arcs, viewed in $\Sigma$, to
  simple closed curves (consisting of pushing $a$ aside), that
  converges $C^1$ to $a$ outside of the point $p$.
  Abusively we still use the same letters $f_n$ to denote these
  closed curves.
  
  Obviously, the sequence $(f_n)_{n\geqslant 1}$ does not
  converge $C^1$ to $a$, because $f_n$ has slope $-1$ at the
  point $(1/n, 1/n)$.
  
  Nonetheless, we claim that $W((f_n),a)$ holds.
  Indeed, let $d$ be such that $a-d$. Since the sequence $(f_n)$
  converges $C^1$ to $a$ everywhere except at the origin of this
  $\R^2$ chart, the only case in which it is not already clear
  that $(f_n)-d$ is when $d$ meets $a$ transversely at the origin.
  If $d$ has a strictly positive slope there, then for $n$ large
  enough, the intersection $f_n\cap d$ will be transverse because
  the slopes of $f_n$ are all negative in the region $x>0$.
  The case when $d$ has negative slope is symmetric, and if $d$
  has vertical slope, it will be transverse with $f_n$ since these
  have bounded slopes.
\end{example}

\subsection{Recognizing configurations of smooth curves}
\label{sec:configSmoothCurves}
In this last section we assume that our surface $\Sigma$ is orientable.

If $a$, $b$ are vertices of $\TransFinLisse(\Sigma)$, we will
denote by $D_\infty(a,b)$ the condition that $a-b$ and
for all sequences $(f_n)$ and $(g_m)$ such that
$W((f_n),a)$ and $W((g_m),b)$, we have $f_n-g_m$ for all
$m, n$ large enough.

\begin{lemma}\label{lem:DisjointLisse}
  Let $a$, $b$ be smooth nonseparating curves. Then
  $D_\infty(a,b)$ holds if and only if $a$ and $b$ are disjoint.
\end{lemma}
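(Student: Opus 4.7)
The plan is to prove both directions of the equivalence.

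For the easy direction, I would assume $a\cap b=\emptyset$. Then $a-b$ holds, since disjoint smooth nonseparating curves are neighbors in $\TransFinLisse(\Sigma)$. Given any sequences $(f_n)$ and $(g_m)$ with $W((f_n),a)$ and $W((g_m),b)$, I would invoke the first bullet of Lemma~\ref{lem:ConvFaible} to get Hausdorff convergence of $f_n$ to $a$ and $g_m$ to $b$. Since $a$ and $b$ are disjoint compact subsets of $\Sigma$, they admit disjoint open neighborhoods $V_a$ and $V_b$; for $n,m$ large enough we have $f_n\subset V_a$ and $g_m\subset V_b$, so $f_n\cap g_m=\emptyset$ and hence $f_n-g_m$.

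For the converse, I would argue by contraposition. Suppose $a$ and $b$ are not disjoint. If their intersection consists of more than one point or of a single non-transverse point, then $a-b$ already fails in $\TransFinLisse(\Sigma)$, so $D_\infty(a,b)$ fails by definition. The substantive case is $a\cap b=\{p\}$ with a single transverse intersection. To violate $D_\infty(a,b)$ I would apply the construction of Example~\ref{eg:ConvFaible} symmetrically to both $a$ and $b$. In a smooth chart around $p$ in which $a$ is the $x$-axis and $b$ is the $y$-axis, let $f_n$ be the curve whose restriction to the chart is the graph $y=\tfrac{2}{n(1+n^2x^2)}$ over $[-1,1]$, extended outside the chart as a small push-off of $a$ into $\{y>0\}$; symmetrically, let $g_m$ be the curve whose restriction to the chart is the graph $x=\tfrac{2}{m(1+m^2y^2)}$, extended as a push-off of $b$ into $\{x>0\}$. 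Example~\ref{eg:ConvFaible} directly yields $W((f_n),a)$ and, by the obvious $(x,y)\leftrightarrow(y,x)$ symmetry, $W((g_m),b)$.

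Next I would show that $f_n-g_n$ fails for every $n$, which precludes any tail satisfying $f_n-g_m$ for all large $n,m$. A direct computation shows that $(1/n,1/n)$ lies on both $f_n$ and $g_n$, and both curves have slope $-1$ there: for $f_n$ this was recorded in Example~\ref{eg:ConvFaible}, and for $g_n$ it follows from the $(x,y)\leftrightarrow(y,x)$ symmetry swapping $f_n$ and $g_n$. Hence $f_n$ and $g_n$ are tangent at $(1/n,1/n)$, a non-transverse intersection, so $f_n-g_n$ fails in $\TransFinLisse(\Sigma)$. Taking $n=m=N$ for any $N$ then shows $D_\infty(a,b)$ fails.

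The main delicacy is global geometry: extending the chart bumps to smooth simple closed nonseparating curves without creating uncontrolled intersections outside the chart that could spoil the conclusion. Since $a$ and $b$ are disjoint (hence at positive distance) outside any fixed neighborhood of $p$, taking the push-offs of $a$ and $b$ uniformly small and on the fixed sides indicated above will ensure that $f_n$ and $g_n$ are disjoint outside the chart; inside the chart, the short slope computation pins down the tangency at $(1/n,1/n)$ as the decisive failure of transversality.
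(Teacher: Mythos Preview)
Your proof is correct and follows essentially the same approach as the paper: the disjoint case via Hausdorff convergence and disjoint neighborhoods, and the intersecting case via the explicit bump sequences of Example~\ref{eg:ConvFaible} applied symmetrically to $a$ and $b$, yielding a tangency at $(1/n,1/n)$. Your treatment is in fact slightly more careful than the paper's, as you explicitly dispose of the case where $a-b$ already fails and address the global extension of the bump curves.
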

\begin{proof}
  Suppose first that $a$ and $b$ are disjoint. Then they admit
  disjoint neighborhoods, $V_1$ and $V_2$. For any sequences
  $(f_n)$ and $(g_m)$ with $W((f_n),a)$ and $W((g_m),b)$,
  for all $m,n$ large enough we have $f_n\subset V_1$ and
  $g_m\subset V_2$, by Lemma~\ref{lem:ConvFaible}. Hence,
  $f_n-g_m$ for all $m,n$ large enough, and $D_\infty(a,b)$
  holds indeed.
  
  Now, suppose that $a$ and $b$ are not disjoint. Since
  $a-b$, the curves $a$ and $b$ have a transverse intersection,
  and in an appropriate chart diffeomorphic to $\R^2$, the
  curves $a$ and $b$ correspond respectively to the axes
  $y=0$ and $x=0$.
  
  Then we may form a sequence $(f_n)$ such that $W((f_n),a)$
  exactly as in example~\ref{eg:ConvFaible}, and for
  $(g_n)$ we just exchange coordinates $x$ and $y$.
  For all $n$, the curves $f_n$ and $g_n$ have
  a non transverse intersection point (at $(1/n,1/n)$ in the
  chart of Example~\ref{eg:ConvFaible}), hence the condition
  $D_\infty(a,b)$ does not hold.
\end{proof}
In the end of the proof, the curves $f_n$ and $g_n$ were tangent
at their intersection point, hence not neighbors in the graph
$\TransFinLisse(\Sigma)$. This may look accidental, but
upon changing the formula of $f_1$ in Example~\ref{eg:ConvFaible}
to $x\mapsto \frac{3}{2+x^2}$, for example, we get three intersection
points.

\medskip

From now on, we restrict ourselves to the case of orientable
surfaces. One reason is that it would take more work to
recover the extra bouquets and not only the bouquets; one
other reason is that the next lemma works best when at least
one of $a$, $b$ or $c$ is two-sided.
\begin{lemma}\label{lem:BouquetLisse}
  Suppose $\Sigma$ is orientable.
  Let $\{a,b,c\}$ be a $3$-clique of $\TransFinLisse(\Sigma)$,
  and suppose that these three curves pairwise intersect.
  Then the following are equivalent.
  \begin{enumerate}
  \item This $3$-clique is of type bouquet.
  \item There exists a relevant sequence $(f_n)$ of vertices of
    $\TransFinLisse(\Sigma)$, which are all disjoint from $a$,
    and such that for all $d$ disjoint from $b$ and
    satisfying $c-d$, we have $(f_n)-d$. 
  \end{enumerate}
\end{lemma}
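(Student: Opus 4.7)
The plan is to treat the two implications separately. For $(1)\Rightarrow(2)$, let $p=a\cap b\cap c$ denote the bouquet point. I will work in local coordinates near $p$ where the three curves appear as three transverse lines, and consider the two smoothings of the crossing $b\cap c$ inside a disk $D_n$ of radius $1/n$ around $p$; exactly one of them produces a pair of short arcs contained in $\Sigma\setminus a$. I then define $f_n$ to agree with $b\cup c$ outside $D_n$ and with this canceling smoothing inside. Since $a\cap(b\cup c)=\{p\}\subset D_n$, the resulting $f_n$ is disjoint from $a$; it is a smooth simple closed curve of homology class $[b]\pm[c]$, which is nonzero mod $2$ because $|b\cap c|=1$ forces $[b]\neq[c]\pmod 2$, so $f_n$ is nonseparating. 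For any $d$ with $d\cap b=\emptyset$ and $d-c$, the point $p$ is not in $d$, so $d\cap D_n=\emptyset$ for $n$ large, and then $f_n\cap d=(b\cup c)\cap d=c\cap d$ has at most one transverse point, giving $f_n-d$. The sequence $(f_n)$ lies in a fixed compact neighborhood of $b\cup c$, so is relevant.

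For $(2)\Rightarrow(1)$, I will argue by contradiction: supposing $\{a,b,c\}$ is a necklace (the only remaining $(1,1,1)$-clique type), with three distinct intersection points $p_{ab}$, $p_{ac}$, $p_{bc}$, I assume (2) holds via a sequence $(f_n)$. Relevance together with Lemma~\ref{lem:fnSenVa} allows extracting a subsequence with nonempty Hausdorff accumulation set $L\subset\Sigma\setminus a$. The guiding heuristic is that the canceling surgery of $b$ and $c$ at $p_{bc}$, which in the bouquet case was disjoint from $a$, now passes through both $p_{ab}$ and $p_{ac}$ on $a$; the conditions of (2) should force $L$ to contain $b\cup c$ away from any fixed neighborhood of $p_{bc}$, so in particular $p_{ab}\in L$, contradicting $L\cap a=\emptyset$.

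To implement this forcing I will, for each point $q\in b$ close to $p_{ab}$ (and away from $p_{bc}$), construct a test curve $d_q$ satisfying $d_q\cap b=\emptyset$ and $d_q-c$, with the property that every smooth simple closed curve avoiding a fixed small neighborhood $U_q$ of $q$ meets $d_q$ in at least two transverse points. The construction will be a variant of Example~\ref{eg:ConvFaible}: I start from a push-off of $b$ (which already satisfies $d\cap b=\emptyset$ and $d-c$), and insert many oscillations transverse to $a$ inside $U_q$. The condition $f_n-d_q$ from (2) then forces $f_n\cap U_q\neq\emptyset$ for $n$ large, that is $q\in L$; letting $q\to p_{ab}$ yields $p_{ab}\in L$, the contradiction. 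The hard part will be carrying out the oscillation construction while preserving both constraints on $d_q$: the wiggles must cross $a$ many times in a region contained in $U_q$ without creating extra intersections with $b$ or with the subarc of $c$ near $p_{bc}$, which can be arranged by placing $U_q$ near $p_{ab}$ but far from $p_{ac}$ and $p_{bc}$.
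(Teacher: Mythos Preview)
Your $(1)\Rightarrow(2)$ is correct and essentially the paper's construction: both smooth the crossing $b\cap c$ inside shrinking disks around the bouquet point and check that the resulting curve is nonseparating and eventually a neighbor of every admissible~$d$.

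For $(2)\Rightarrow(1)$ there is a real gap. The contradiction you aim for is ``$p_{ab}\in L$ while $L\subset\Sigma\setminus a$'', but the inclusion $L\subset\Sigma\setminus a$ is unjustified and in fact false in general: closed curves disjoint from $a$ can Hausdorff-accumulate on $a$ (push-offs of $a$ do exactly this). So even if you succeed in forcing $f_n$ to visit every small neighborhood of $p_{ab}$, no contradiction with $f_n\cap a=\emptyset$ follows. A related problem is the property you demand of the test curve $d_q$: that \emph{every} smooth nonseparating closed curve avoiding $U_q$ meets $d_q$ at least twice. This cannot hold as stated (any curve far from $d_q$ is a counterexample); at best it could hold for curves already confined to a tubular neighborhood of $b\cup c$, but establishing that confinement is the missing step, and a single oscillating $d_q$ will not do it.

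The paper's argument for $(2)\Rightarrow(1)$ is more global. First, using the three-bottle trick of Lemma~\ref{lem:fnSenVa} (with the bottles completed to push-offs of $b$, hence disjoint from $b$ and satisfying $d_j-c$), one shows that the sequence $(f_n)$ must eventually lie in any prescribed regular neighborhood $V$ of $b\cup c$. Second, assuming the necklace configuration, one observes that $V\setminus a$ is contractible: $a\cap V$ consists of two short transverse arcs, one across $b$ near $p_{ab}$ and one across $c$ near $p_{ac}$, and cutting a thickened figure-eight along these two dual arcs leaves a disk. Hence $V\setminus a$ contains no nonseparating simple closed curve, contradicting $f_n\subset V\setminus a$. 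This topological endgame is what your accumulation argument was trying to replace, and it is where the nonseparating hypothesis on $f_n$ is actually used.
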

\begin{proof}
  Suppose $\{a,b,c\}$ is of type bouquet. Then the sequence
  $(f_n)_{n\in\N}$ can be constructed explicitly.
  Let $p=b\cap c$. Fix a (smooth) metric on $\Sigma$,
  we remove all points of the ball $B(p,\frac{1}{n})$ off the
  curves $b$ and $c$, this gives two arcs. There is a natural
  way of adding smooth subarcs of $B(p,\frac{1}{n})$ in order
  to extend this union of two arcs, to a curve $f_n$ which
  does not intersect $a$.
  In a one-holed torus neighborhood of $b\cup c$, with
  a choice of meridian and longitude coming from $b$ and $c$, these curves
  $f_n$ have slope $1$, or $-1$; these are indeed nonseparating simple
  closed curves.
  Now if $d$ is disjoint from $b$ and satisfies $c-d$, then
  either $d$ is disjoint from $c$, and then $f_n$ is disjoint from $d$
  for all $n$ large enough, or $d$ has a transverse intersection
  with $c$ at a point distinct from $p$, and we also have $f_n-d$ for all
  $n$ large enough. Thus, (1) implies~(2).
  
  Conversely, suppose~(2). We first claim that the sequence $(f_n)$ then
  concentrates into neighborhoods of $b\cup c$.
  For contradiction, suppose that we can find a neighborhood $V$ of
  $b\cup c$, such that $f_n\not\subset V$ for infinitely many $n$. Then, there
  exists a point $x$, with $x\not\in b\cup c$, and such that every
  neighborhood of $x$ meets infinitely many $f_n$. Then we may choose
  three bottle-shaped arcs around $x$, and complete them into curves
  $d_1$, $d_2$ and $d_3$ disjoint from $b$ and satisfying $d_j-c$ for
  $j=1,2,3$. Indeed, we may start with a curve $d_0$ obtained by
  pushing $b$ aside, and then perform surgeries on $d_0$.
  The same reasoning as in the proof of Lemma~\ref{lem:fnSenVa} shows
  that $f_n\not -d_j$
  for some $j\in\{1,2,3\}$ and for infinitely 
  many $n$,
  contradicting the hypothesis~(2).
  
  Now, suppose for contradiction that $\{a,b,c\}$ is a necklace. Then,
  for a sufficiently small regular neighborhood $V$ of $b\cup c$, we
  may observe that $V\smallsetminus a$ is contractible. Hence it cannot
  contain any nonseparating simple closed curve $f_n$, and the
  hypothesis~(2) cannot be fullfilled. This proves that (2) implies~(1).
\end{proof}

Now the proof of Theorem~\ref{thm:AutCFinLisse} is a straightforward
adaptation of the proof of Theorem~\ref{thm:AutC1}.
The statements about
connectedness of complexes of arcs, for example, are equivalent to
their counterparts with regularity, because of the argument of
homotopy recalled in the proof of
Lemma~\ref{lem:NonSepFinRelUnBordConnexe} and borrowed
from~\cite{BHW}.


\bibliographystyle{plain}

\bibliography{BibAutFin}

\end{document}